\newcommand{\N}{\mathbb{N}}
\newcommand{\R}{\mathbb{R}}
\renewcommand*\env@matrix[1][*\c@MaxMatrixCols c]{%
  \hskip -\arraycolsep
  \let\@ifnextchar\new@ifnextchar
  \array{#1}} 
\newtheorem{corollary}{Corollary}[section]
\newtheorem{remark}{Remark}[section]    
\newtheorem{lemma}{Lemma}[section]
\newtheorem{proposition}{Proposition}[section] 
\newtheorem{thm}{Theorem}
\newtheorem*{thm*}{Theorem}
\numberwithin{equation}{section}
\newtheorem{propx}{Proposition}
\newtheorem{lemx}{Lemma}
\newtheorem{thmx}{Theorem}
\newcommand\blfootnote[1]{%
  \begingroup
  \renewcommand\thefootnote{}\footnote{#1}%
  \addtocounter{footnote}{-1}%
  \endgroup
}
\title{On the blow-up analysis at collapsing poles for solutions of singular Liouville type equations.}
\author{Gabriella Tarantello}
\begin{document}

\maketitle

\begin{abstract}
We analyse a blow-up sequence of solutions for Liouville type equations
involving Dirac measures with "collapsing" poles.
We consider the case where blow-up occurs exactly at a point where the poles coalesce. 

After proving that a "quantization" property still holds for the "blow-up mass",
we obtain precise pointwise estimates
when blow-up occurs with the least blow-up mass. 

Interestingly, such estimates express the exact analogue of those obtained in 
\cite{Li_Harnack} for solutions of "regular" Liouville equations, 
where the "collapsing" Dirac measures are neglected. 
Such information will be used in \cite{Tar_2} to describe
the asymptotic behaviour of minimizers of the Donaldson functional introduced in 
\cite{Goncalves_Uhlenbeck}, 
yielding to mean curvature 1-immersions of surfaces into hyperbolic 3-manifolds.

\end{abstract}

\section{Introduction}

\blfootnote
{
MSC:
35J75
35J61.\; 
Keywords:
Blow-up analysis, 
Singular Lioville equations,
Blow-up at collapsing poles, 
Mass quantization and pointwise estimate
}

In this note we analyze the behaviour of a sequence of solutions 
for Liouville type equations involving Dirac measures 
whose poles coalesce at a point where the solutions blow-up.
Namely, we have blow-up at a point of "collapsing" singularities. 
This situation naturally arises in the study of Toda systems of Liouville type
in the analysis of the so called "shadow" system discussed in 
\cite{Lee_Lin_Wei_Yang}.
It has motivated the work in \cite{Lin_Tarantello},\cite{Lee_Lin_Tarantello_Yang},
where (after \cite{Ohtusuka_Suzuki}) the phenomenon of "blow-up without concentration"
was recorded and illustrated by various examples.

More conveniently, 
for the following discussion, 
we notice that the role of the poles 
may be replaced by the zeroes of the weight functions 
appearing in the equations
governing the "regular" part of the given solution sequence. 
In this formulation, the issue is to understand 
the blow-up behavior of a (regular) sequence at a point of "collapsing" zeroes.

In fact, this is exactly what may occur 
when studying the asymptotic behavior  of constant mean curvature
(CMC) c-immersions of a closed oriented surface $S$ of genus $\mathfrak{g}\geq 2$
into hyperbolic 3-manifolds. 
Following \cite{Goncalves_Uhlenbeck}, 
it was recently established in \cite{Huang_Lucia_Tarantello_2} 
that for $\vert c \vert <1$, 
the moduli space of all such (CMC) c-immersions can be parametrized 
by elements of the tangent bundle $T(\mathcal{T}_{\mathfrak{g}}(S))$. 
Recall that $\mathcal{T}_{\mathfrak{g}}(S)$ is the space of 
all conformal structures on $S$ identified up to biholomorphisms 
in the same homotopic class of the identity. 
Actually, for $\vert c \vert <1$, the authors in \cite{Huang_Lucia_Tarantello_2}
could label any such (CMC) c-immersion with the   
minimizer (and unique critical point) 
of the Donaldson functional associated to a given element of 
$T(\mathcal{T}_{\mathfrak{g}}(S))$,  
as introduced in \cite{Goncalves_Uhlenbeck}.

In \cite{Tar_2}, we initiated our investigation
about the existence of analogous (CMC) 1-immersions. 
As established in \cite{Tar_2}, 
those immersions can be detected only as "limits", as $c\longrightarrow 1^{-}$,
of the (CMC) c-immersions obtained in \cite{Huang_Lucia_Tarantello_2}. 
In view of the work of Bryant \cite{Bryant} about 
(CMC) 1-immersions into the hyperbolic space $\mathbb{H}^{3}$,
we expect that those (CMC) c-immersions develop  at the limit: 
$c  \longrightarrow 1^{-}$, 
finitely many "punctures", corresponding to possible ramification points. 
We hope to capture such behaviour by means of our blow-up analysis,
with the "punctures" being realized by the blow-up points.
But we face a delicate situation, exactly
when, at the limit, the pull-back metrics of the (CMC) c-immersions  
blow-up at points of "collapsing" zeroes 
of the holomorphic quadratic differentials
identified by the second fundamental form of the immersion.
Indeed, recall that any holomorphic quadratic differential admits 
exactly $4(\mathfrak{g}-1)$ zeroes in $X$,  
counted with multiplicity (see \cite{Griffiths_Harris}).

The pointwise estimates we establish here
(see Theorem \ref{theorem_for_reference})
have helped us to obtain in \cite{Tar_2} the first existence result
about (regular) (CMC) 1-immersions of surfaces
of genus 2 into 3-manifolds of sectional curvature -1,
see  \cite{Tar_2} for details. 

\ 

More precisely,  we start by showing that even in the "collapsing" case, 
the blow-up mass (see \eqref{blow_up_mass_intro} below)
is "quantized" and must take values in $8 \pi \mathbb{N}$. 
This is a somewhat expected property,  
as it has been worked out already in the context of systems in
\cite{Lee_Lin_Wei_Yang},\cite{Lee_Lin_Yang_Zhang}
and in \cite{Lee_Lin_Tarantello_Yang} only for two collapsing zeroes. 
Nevertheless, for the sake of completeness, 
we have chosen to provide in Section \ref{sec_preliminaries} a detailed proof 
(see Theorem \ref{thm_1}), together with other useful extensions of known facts. 
 
More importantly, we obtain sharp pointwise estimates
when blow-up occur with the least blow-up mass $8\pi$. 
Interestingly, such estimates are exactly the analogue
of the estimates obtained by Li in \cite{Li_Harnack}
for solution-sequences of Liouville equations  
with non-vanishing weight functions, 
where none of the "collapsing" issues discussed here arise. 

In order to state our result, 
(after a translation) 
we localize our analysis around the origin. 
Therefore we consider a solution sequence $u_{k}$ satisfying:
\begin{DispWithArrows}<>
&
-\Delta u_{k} = h_{k} e^{u_{k}} -4\pi\sum_{j=1}^{s}\alpha_{j}\delta_{p_{j,k}}
\label{eq_intro}
\\
&
\int B_{r}h_{k}e^{u_{k}}\leq C
\label{volume_control_intro}
\end{DispWithArrows}
with
\begin{equation}\label{distinct_points_collapsing}
s \geq 2 \; \text{ and } \; 
\; \text{ \underline{distinct} points } \; 
p_{j,k}\longrightarrow 0, \; \text{ as } \; k\longrightarrow +\infty, \; j=1,\ldots,s
\end{equation}
\begin{equation}\label{h_k_uniform_bound}
0<a\leq h_{k}\leq b,\; \vert \nabla h_{k} \vert \leq A \; \text{ in } \; B_{r}.
\end{equation}
In addition, we require the following bounded oscillation property:
\begin{equation}\label{bounded_oscillation_intro}
\max_{\partial B_{r}} u_{k} - \min_{\partial B_{r}} u_{k} \leq C
\end{equation}
which is always verified when $u_{k}$ is actually a "localization"
of a globally defined function, for example over a Riemann surface 
(see e.g. Theorem \ref{prop_3.2}). 

We define the "regular" part $\xi_{k}$ of $u_{k}$ as given by
\begin{equation}\label{xi_k_definition_intro}
u_{k}(x)=\xi_{k}(x) + \sum_{j=1}^{n} 2 \alpha_{j} \ln\vert x-p_{j,k} \vert 
\end{equation}
which defines a smooth function in $B_{r}$ and satisfies
\begin{DispWithArrows}<>
&
-\Delta \xi_{k} = (\Pi^{s}_{j=1}\vert x-p_{j,k} \vert^{2 \alpha_{j}})h_{k}e^{\xi_{k}}
\; \text{ in } \; B_{r} 
\label{system_for_xi_equation}
\\
&
\int_{B_{r}} W_{k} e^{\xi_{k}} \leq C
\; \text{ with } \; 
W_{k}(x)=(\Pi^{s}_{j=1}\vert x-p_{j,k} \vert^{2 \alpha_{j}})h_{k}(x) 
\\
& 
\max_{\partial B_{r}} \xi_{k} - \min_{\partial B_{r}} \xi_{k} \leq C
\end{DispWithArrows}
We assume that $\xi_{k}$ admits a blow-up point at the origin, that is
\begin{equation}
\xi_{k}(0)=\max_{\bar{B}_{r}}\xi_{k} \longrightarrow +\infty,  
\; \text{ as } \; k\longrightarrow +\infty.
\end{equation}
Since by our assumptions we know that 
$\xi_{k}$ can admit at most finitely many blow-up points 
(see Remark \ref{sigma_bigger_4_pi}), 
by taking $r>0$ smaller if necessary, 
we can further assume that the origin is the \underline{only} blow-up point 
for $\xi_{k}$ in $B_{r}$, namely:
\begin{equation}\label{only_blow_up_point_for_xi}
\; \forall \; \varepsilon \in (0,r) \; \exists \; C_{\varepsilon}>0 
\; : \; 
\max_{\overline{B}_{r} \setminus B_{\varepsilon} } \xi_{k} \leq C_{\varepsilon}.
\end{equation}
Finally, we define the "blow-up mass" of $\xi_{k}$ at the origin as follows
\begin{equation}\label{blow_up_mass_intro}
\sigma
=
\lim_{\delta \to 0^{+} } 
\underset{ k\to +\infty  }{\underline{\lim}} 
\int_{B_{\delta}}W_{k}e^{\xi_{k}}.  
\end{equation}
The main results  contained in this note can be summarized as follows:
\begin{thm*}
Assume that $\xi_{k}$ satisfies
\eqref{system_for_xi_equation}-\eqref{only_blow_up_point_for_xi}.
If $\alpha_{j}  \in \mathbb{N}$ and 
\eqref{distinct_points_collapsing},\eqref{h_k_uniform_bound} hold then,
for $\sigma$ in \eqref{blow_up_mass_intro} we have: 
\begin{equation*}
\sigma\in 8\pi\mathbb{N}.
\end{equation*}
Furthermore, if $\sigma = 8\pi$ then:
\begin{align}
&
p_{j,k}\neq 0 \; \forall \; j=1,\ldots,s,\;
W_{k}(0)=\Pi^{s}_{j=1}\vert p_{j,k} \vert^{2 \alpha_{j}}h_{k}(0) >0
\; \text{ and } \; 
W_{k}(0)\xrightarrow{k\to \infty}  0^{+}
\notag \\
&
\xi_{k}(x)
=
\ln\frac{e^{\xi_{k}(0)}}{(1+W_{k}(0)e^{\xi_{k}(0)}\vert x \vert^{2})^{2}}
+
O(1)
\; \text{ in } \; B_{r};
\label{shape_of_xi}
\\
&
\int_{B_{r}}\vert \nabla \xi_{k} \vert^{2}
=
16\pi(\,\xi_{k}(0)+\ln(W_{k}(0))\,) + O(1); 
\notag \\
&
\xi_{k}(0)+\min_{\partial B_{r}}\xi_{k}+2\ln W_{k}(0) = O(1)
\notag \\
&
u_{k}(0) + \min_{\partial B_{r}} u_{k}\longrightarrow +\infty, \; \text{ as } \; k\longrightarrow +\infty,
\; \text{ $u_{k}$ in \eqref{xi_k_definition_intro}. } 
\label{u_k_blow_up_at_0}
\end{align}	
\end{thm*}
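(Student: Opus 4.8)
The plan is to proceed in two phases: first establish the quantization $\sigma\in 8\pi\mathbb N$, then extract the fine pointwise description in the least-mass case $\sigma=8\pi$. For the quantization, I would follow the by-now-standard Brezis--Merle / Li--Shafrir scheme adapted to the collapsing weight $W_k$. Since $W_k(x)=(\Pi_j|x-p_{j,k}|^{2\alpha_j})h_k(x)$ with $\alpha_j\in\mathbb N$, the weight is genuinely bounded above and below away from the $p_{j,k}$, but degenerates at the coalescing poles; the key point is that $W_k\to W_\infty=|x|^{2\alpha}h_\infty$ in, say, $C^0_{loc}(B_r\setminus\{0\})$ with $\alpha=\sum_j\alpha_j\in\mathbb N$. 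I would rescale around the maximum point: set $\varepsilon_k$ by $\varepsilon_k^{-2}=e^{\xi_k(0)}$ (or more precisely by the standard choice balancing the weight) and look at $\hat\xi_k(y)=\xi_k(\varepsilon_k^{?}y)-\xi_k(0)$, distinguishing the regimes according to how fast $|p_{j,k}|/\varepsilon_k$ behaves. After the first bubble is subtracted one gets a limiting entire solution of a Liouville equation $-\Delta U = |y|^{2\beta}e^U$ (with $\beta$ either $0$ or some nonnegative integer depending on the relative scales), whose total mass is $8\pi(1+\beta)$ by the Chen--Li / Prajapat--Tarantello classification — crucially an integer multiple of $8\pi$ because $\beta\in\mathbb N$. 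Iterating the bubbling (this is where one needs the finitely-many-blow-up-points fact from Remark \ref{sigma_bigger_4_pi} and the bounded oscillation hypothesis \eqref{bounded_oscillation_intro} to rule out boundary escape) and using a Pohozaev identity at each scale to track the mass, one concludes $\sigma$ is a finite sum of such $8\pi(1+\beta_i)$ contributions, hence $\sigma\in 8\pi\mathbb N$. I would lean on Theorem \ref{thm_1} here rather than redoing it.

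For the least-mass case $\sigma=8\pi$: quantization forces exactly one bubble with $\beta=0$, i.e. the limiting profile is the \emph{standard} Liouville bubble $U(y)=\ln\frac{1}{(1+\frac{1}{8}|y|^2)^2}$ (up to translation/dilation), and there is \emph{no} secondary bubbling. The first thing to pin down is that the origin cannot coincide with any pole: if some $p_{j,k}=0$ then the weight vanishes to order $2\alpha_j\ge 2$ at the blow-up point, the rescaled limit would be $-\Delta U=|y|^{2\alpha_j}(\cdots)e^U$ with mass $\ge 8\pi(1+\alpha_j)\ge 16\pi>8\pi$, contradiction; hence $p_{j,k}\ne 0$ for all $j$ and $W_k(0)=\Pi_j|p_{j,k}|^{2\alpha_j}h_k(0)>0$. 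Moreover all $|p_{j,k}|$ must be small relative to the bubble scale — otherwise again the limiting equation sees a $|y|^{2\alpha_j}$ factor and the mass exceeds $8\pi$; this forces $W_k(0)\to 0^+$ (indeed $W_k(0)\le \Pi_j|p_{j,k}|^{2\alpha_j}b$ and the scales collapse). With this in hand, the correct rescaling is $\hat\xi_k(y)=\xi_k(\mu_k y)+2\ln\mu_k$ where $\mu_k^{-2}=W_k(0)e^{\xi_k(0)}$, so that $-\Delta\hat\xi_k=\frac{W_k(\mu_k y)}{W_k(0)}e^{\hat\xi_k}$ and $\frac{W_k(\mu_k y)}{W_k(0)}=\Pi_j\frac{|\mu_k y-p_{j,k}|^{2\alpha_j}}{|p_{j,k}|^{2\alpha_j}}\to 1$ locally uniformly (using $|p_{j,k}|/\mu_k\to\infty$). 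Then $\hat\xi_k\to U$ in $C^2_{loc}(\mathbb R^2)$, giving the interior profile \eqref{shape_of_xi} on compact sets.

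The real work — and the main obstacle — is upgrading this local convergence to the \emph{global} pointwise estimate \eqref{shape_of_xi} valid on all of $B_r$ (not just on expanding balls of radius $\mu_k R$). This is exactly the point where the analogue of Li's argument in \cite{Li_Harnack} must be reproduced: one needs a Harnack-type inequality in the "neck region" $\{\mu_k R\le |x|\le r\}$ together with the bounded-oscillation boundary control \eqref{bounded_oscillation_intro} and the sharp decay coming from $\sigma=8\pi$ (so the total mass outside the bubble is $o(1)$), to show $\xi_k(x)+2\ln(\mu_k)+2\ln|x/\mu_k|^2$ — i.e. the difference between $\xi_k$ and the putative profile — stays bounded uniformly. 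Concretely I would: (i) use a Pohozaev identity on $B_r$ to get $\int_{B_r}|\nabla\xi_k|^2=16\pi(\xi_k(0)+\ln W_k(0))+O(1)$ once the profile is known; (ii) combine the $\sigma=8\pi$ fine volume estimate $\int_{B_\delta\setminus B_{\mu_kR}}W_ke^{\xi_k}=o(1)$ with the equation to control oscillation on dyadic annuli à la Li, yielding $\xi_k(x)=-2\ln(1+W_k(0)e^{\xi_k(0)}|x|^2)+\xi_k(0)+O(1)$ throughout $B_r$; (iii) evaluate on $\partial B_r$ to read off $\xi_k(0)+\min_{\partial B_r}\xi_k+2\ln W_k(0)=O(1)$. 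Finally \eqref{u_k_blow_up_at_0} follows by unwinding \eqref{xi_k_definition_intro}: $u_k(0)+\min_{\partial B_r}u_k = \xi_k(0)+\sum_j 2\alpha_j\ln|p_{j,k}| + \min_{\partial B_r}\xi_k + O(1) = -\ln W_k(0)+\ln\Pi_j|p_{j,k}|^{2\alpha_j}+O(1)+O(1)=-\ln h_k(0)+O(1)$ — wait, more carefully $\xi_k(0)+\sum_j 2\alpha_j\ln|p_{j,k}| = \xi_k(0)+\ln W_k(0)-\ln h_k(0)$, and since $\xi_k(0)+\ln W_k(0)\to+\infty$ (as $\mu_k^{-2}=W_k(0)e^{\xi_k(0)}\to\infty$, forced because otherwise no concentration) while $h_k(0)$ is bounded above and below, the sum $u_k(0)+\min_{\partial B_r}u_k$ indeed diverges to $+\infty$. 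I expect steps (ii) to be the technically heaviest — it is precisely the transplantation of \cite{Li_Harnack}'s neck analysis to the degenerating-weight setting, and one must check carefully that the ratio $W_k(\cdot)/W_k(0)$ stays comparable to $1$ on the whole annulus $\mu_kR\le|x|\le r$, which uses $|p_{j,k}|\le|x|$ type bounds once $|x|\gg|p_{j,k}|$ and separately handles the innermost scales where $|x|\lesssim\max_j|p_{j,k}|$.
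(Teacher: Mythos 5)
Your overall architecture is the right one and, in broad strokes, matches the paper's: quantization via Theorem \ref{thm_1}; then, in the least-mass case, a single-bubble profile at a suitable small scale, followed by a Li-type analysis in the neck region up to $\partial B_{r}$, a Green's-representation/Brezis--Merle upper bound of the form $(4+\varepsilon)\ln(1/\vert x \vert)$ subsequently refined to $4\ln(1/\vert x \vert)+O(1)$, and the energy identity obtained by testing the equation. The final bookkeeping leading to \eqref{u_k_blow_up_at_0} is also essentially the paper's.

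The genuine gap is at the step you dispatch with ``forced because otherwise no concentration'' and ``otherwise the mass exceeds $8\pi$''. Your plan rescales once, at the bubble scale $\mu_{k}$ with $\mu_{k}^{-2}=W_{k}(0)e^{\xi_{k}(0)}$, and needs two facts as input: that $W_{k}(0)e^{\xi_{k}(0)}\to+\infty$, and that $\vert p_{j,k}\vert/\mu_{k}\to+\infty$ for every $j$. Neither is automatic. In the collapsing regime, blow-up \emph{without} concentration is a real possibility (this is precisely the phenomenon of \cite{Lin_Tarantello}), so ``no concentration'' is not an absurdity you may invoke for free; the paper rules it out in the least-mass case only through Proposition \ref{prop_3.1}, which combines the Pohozaev relation $m^{2}-\mu^{2}=4(1+\alpha)(m-\mu)$ of \eqref{2.33} with the quantization $\mu\in 4\N\cup\{0\}$ to force $m=\mu=4$ and concentration at a single point for the sequence rescaled at the pole-collapse scale $\tau_{k}=\max_{j}\vert p_{j,k}\vert$. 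Moreover the poles may collapse at several distinct rates ($\vert p_{1,k}\vert\ll\vert p_{2,k}\vert\ll\dots$), and a single rescaling cannot resolve this: the paper's Theorem \ref{prop_3.2} performs an \emph{iterated} rescaling at the successive pole scales, re-applying Proposition \ref{prop_3.1} at each stage, until it reaches a scale $\varepsilon_{k}=\vert p_{s_{1},k}\vert$ at which the rescaled weight no longer degenerates near the blow-up point; it is only this iteration that yields \eqref{3.8}, from which $\mu_{k}\ll\varepsilon_{k}\leq C\vert p_{j,k}\vert$ and hence both of your facts follow. The same remark applies to your proof that $p_{j,k}\neq 0$: if, say, $p_{1,k}=0$, the weight is $\vert x \vert^{2\alpha_{1}}\tilde h_{k}$ with $\tilde h_{k}=\Pi_{l\geq 2}\vert x-p_{l,k}\vert^{2\alpha_{l}}h_{k}$ still vanishing at collapsing points, so the Bartolucci--Tarantello mass value $8\pi(1+\alpha_{1})$ cannot be quoted directly; the paper applies it only after the iteration has separated the remaining poles from the origin. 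Without this multi-scale step your neck analysis also risks circularity, since the claim that the mass outside the bubble is $o(1)$ presupposes that the bubble lives at scale $\mu_{k}$ and already carries all of the $8\pi$.
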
	
By \eqref{u_k_blow_up_at_0}, we see that our original sequence $u_{k}$ 
satisfying \eqref{eq_intro},\eqref{volume_control_intro} and \eqref{bounded_oscillation_intro}
blows-up at the origin if and only if its regular part $\xi_{k}$ 
(see \eqref{xi_k_definition_intro}) 
blows-up there as well. 
Furthermore, as already observed, 
it is remarkable that the pointwise estimate \eqref{shape_of_xi}
is the exact analogue of the one established in \cite{Li_Harnack},
when $W_{k}(0)$ is bounded uniformly away from zero, 
and none of the issues (discussed here) about
"collapsing of zeros" arise.  

Our results are just a first contribution towards the understanding 
of the blow-up phenomena at "collapsing" zeroes  (or poles).
We hope they open the way to the description of "multiple" blow-up profiles,
already present in the "non-collapsing" case, 
according to the analysis
in \cite{Bartolucci_Tarantello_JDE},\cite{Wei_Zhang_1} and \cite{Wei_Zhang_2}.

\section{Blow-up at collapsing zeroes: local analysis.}\label{sec_preliminaries}

Let $\Omega \subset \R^{2}$ be an open, bounded and regular set.  
We consider the sequence: 
$\eta_{k}\in C^{2}(\Omega)\cap C^{0}(\overline{\Omega})$,
satisfying the following Liouville type problem:

\begin{DispWithArrows}<>
& -\Delta \eta_{k} = W_{k} e^{\eta_{k}}   \;\text{in}\;   \Omega   \label{1.a}  \\
& \max_{\partial \Omega}\eta_{k} -\min_{\partial \Omega}\eta_{k} \leq C   
\\
& \int_{\Omega} W_{k}e^{\eta_{k}}\leq C  \label{1.c} 
\end{DispWithArrows}
with a weight function $W_{k}\in L^{\infty}(\Omega)$.

After the work of Brezis-Merle \cite{Brezis_Merle}, 
a vast literature is now available, 
concerning the asymptotic behavior of $\eta_{k}$ (possibly along a subsequence), 
as $k\longrightarrow +\infty$, 
according to various assumptions on $W_{k}$ 
and its vanishing behavior, 
see for example \cite{Bartolucci_Tarantello_Comm_Math_Phys},\cite{Chen_Lin_2},\cite{Chen_Lin_3},\cite{Li_Shafrir},\cite{Ohtusuka_Suzuki},\cite{Tarantello_Book}.

Motivated by our applications in \cite{Tar_2}, 
where we describe the asymptotic behaviour of minimizers of the Donaldson functional
introduced in \cite{Goncalves_Uhlenbeck}
(see \cite{Huang_Lucia_Tarantello_2}), 
here we shall take  $W_{k}$ to satisfy:
\begin{equation}\label{1.1}
W_{k} \geq 0
\; \text{ and } \;  
\Vert W_{k} \Vert_{L^{\infty}(\Omega)}
+
\int_{\Omega} \frac{1}{(W_{k})^{\varepsilon_{0}}}
\leq C,
\; \text{ for some } \; 
\varepsilon_{0} >0.
\end{equation}

A first important information about the sequence $\eta_{k}$ is the following
well known result, stemming from \cite{Brezis_Merle}:

\begin{propx}\label{prop_A}
Assume \eqref{1.a}-\eqref{1.c} and \eqref{1.1}. 
If
$
\overline{\lim}_{k \to +\infty }\int_{\Omega} W_{k}e^{\eta_{k} }
<
4\pi
$, 
then $\eta_{k}^{+} $ is uniformly bounded in $C^{0}_{loc}(\Omega)$.
\end{propx}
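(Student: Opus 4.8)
The plan is to adapt the classical Brezis--Merle alternative argument to the present setting, where the only new ingredient compared to the original situation is that the weight $W_k$ may vanish (at the collapsing zeroes), but is controlled through the integrability hypothesis $\int_\Omega (W_k)^{-\varepsilon_0}\le C$ in \eqref{1.1}. First I would fix any compact $K\subset\subset\Omega$ and a slightly larger smooth subdomain $K\subset\subset\Omega'\subset\subset\Omega$, and split $\eta_k$ on $\Omega'$ into a ``bulk'' part and a ``harmonic'' part: write $\eta_k = v_k + w_k$ on $\Omega'$, where $v_k$ solves $-\Delta v_k = W_k e^{\eta_k}$ in $\Omega'$ with $v_k=0$ on $\partial\Omega'$, and $w_k$ is harmonic in $\Omega'$ with $w_k=\eta_k$ on $\partial\Omega'$.

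For the bulk part $v_k$, the key point is the Brezis--Merle inequality: since $f_k:=W_k e^{\eta_k}\ge 0$ and $\|f_k\|_{L^1(\Omega')}\le\|f_k\|_{L^1(\Omega)}\le C<4\pi$ (by the hypothesis $\overline{\lim}\int_\Omega W_k e^{\eta_k}<4\pi$, absorbing the subsequence), for any $p$ with $1<p<4\pi/\bigl(\overline{\lim}\int W_k e^{\eta_k}\bigr)$ one gets a uniform bound $\int_{\Omega'} e^{p\,|v_k|}\le C_p$. Hence $e^{v_k}$ is bounded in $L^p(\Omega')$ for some $p>1$. Next I would use this together with the cheap bound $W_k e^{\eta_k}\le b\,e^{\eta_k}$ — wait, more carefully: $W_k e^{\eta_k} = W_k e^{v_k} e^{w_k}$, and on the harmonic part $w_k$ one first needs a uniform upper bound.

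For the harmonic part $w_k$, I would invoke the bounded-oscillation hypothesis: $\max_{\partial\Omega}\eta_k - \min_{\partial\Omega}\eta_k\le C$, which, combined with the $L^1$ volume bound $\int_\Omega W_k e^{\eta_k}\le C$ and the integrability lower control on $W_k$ (so that $\Omega$ does not degenerate into a set where $e^{\eta_k}$ can be arbitrarily large while $W_k$ is tiny — precisely, on the set where $W_k\ge t$ one has $\int_{\{W_k\ge t\}} e^{\eta_k}\le C/t$, and the complement has small measure by Chebyshev applied to $(W_k)^{-\varepsilon_0}$), yields that $\min_{\partial\Omega}\eta_k$, and hence $\max_{\partial\Omega'}\eta_k$ after passing to the slightly smaller domain via a standard sub-mean-value / Harnack-type estimate, is bounded from above: $\sup_{\partial\Omega'}\eta_k\le C$. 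Since $w_k$ is harmonic, the maximum principle gives $\sup_{\Omega'}w_k\le\sup_{\partial\Omega'}w_k=\sup_{\partial\Omega'}\eta_k\le C$; and then on $K\subset\subset\Omega'$, interior estimates for harmonic functions (mean value property plus the uniform $L^1$ bound $\int_{\Omega'}|w_k|\le C$ coming from the one-sided bound and the volume/oscillation control) give $\|w_k\|_{L^\infty(K)}\le C_K$.

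Putting these together: on $\Omega'$, $\eta_k^+\le v_k^+ + w_k^+\le |v_k| + C$; by the Brezis--Merle exponential integrability, $W_k e^{\eta_k}\le b\,e^{C}e^{|v_k|}$ is bounded in $L^p(\Omega')$ for some $p>1$, so elliptic $L^p$ regularity applied to $-\Delta v_k = W_k e^{\eta_k}$ upgrades $v_k$ to be bounded in $W^{2,p}(K')\hookrightarrow C^0(K)$, and the harmonic part is already bounded on $K$; therefore $\eta_k^+$ is uniformly bounded on $K$, i.e. in $C^0_{loc}(\Omega)$. The main obstacle — and the only place the collapsing structure enters — is the control of $\min_{\partial\Omega'}\eta_k$ from below (equivalently the upper bound on the harmonic part), because a priori $e^{\eta_k}$ could be large precisely where $W_k$ is small; this is exactly what the hypothesis $\int_\Omega (W_k)^{-\varepsilon_0}\le C$ is designed to rule out, via the Chebyshev/Hölder splitting indicated above, so I would spell that step out with care while treating the Brezis--Merle estimate and the harmonic interior estimates as standard.
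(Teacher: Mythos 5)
The paper does not actually prove Proposition \ref{prop_A}: it only cites Proposition 5.3.13 of \cite{Tarantello_Book}. So your reconstruction can only be measured against the standard Brezis--Merle scheme, which is indeed what you follow, and you correctly identify the one genuinely new ingredient here: the hypothesis $\int_\Omega W_k^{-\varepsilon_0}\le C$ is what controls $\eta_k^+$ in $L^1$ despite the vanishing of $W_k$. (The paper performs exactly the relevant H\"older computation, $\int_\Omega e^{s_0\eta_k}\le (\int_\Omega W_ke^{\eta_k})^{s_0}(\int_\Omega W_k^{-\varepsilon_0})^{1-s_0}$ with $s_0=\varepsilon_0/(1+\varepsilon_0)$, inside the proof of Proposition \ref{prop_0.3}; together with $t^+\le e^t$ this gives $\int_\Omega\eta_k^+\le C$.) Your use of the Brezis--Merle exponential estimate for $v_k$ and the final $L^p$ elliptic bootstrap are fine.

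The gap is in your treatment of the harmonic part. From $\min_{\partial\Omega}\eta_k=\min_{\overline\Omega}\eta_k\le\fint_\Omega\eta_k^+\le C$ (superharmonicity plus the H\"older bound) and bounded oscillation you correctly get $\max_{\partial\Omega}\eta_k\le C$; but you then pass from $\partial\Omega$ to $\partial\Omega'$ ``via a standard sub-mean-value / Harnack-type estimate''. This step fails: $-\Delta\eta_k=W_ke^{\eta_k}\ge0$ makes $\eta_k$ \emph{super}harmonic, not subharmonic, so an upper bound on $\partial\Omega$ does not propagate inward; worse, $\sup_{\partial\Omega'}\eta_k\le C$ is itself an instance of the conclusion being proved (an upper bound on a compact subset of $\Omega$), so invoking it at that stage is circular. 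Two standard repairs: (a) perform the decomposition $\eta_k=v_k+w_k$ on all of $\Omega$ rather than on $\Omega'$, so that $w_k$ is harmonic with boundary data $\eta_k|_{\partial\Omega}\le C$ and the maximum principle gives $\sup_\Omega w_k\le C$ directly; or (b) keep $\Omega'$ but bound $\int_{\Omega'}w_k^+\le\int_{\Omega'}\eta_k^+ +\int_{\Omega'}\vert v_k\vert\le C$ and apply the sub-mean-value inequality to the genuinely subharmonic function $w_k^+$ to obtain $\sup_K w_k^+\le C_K$. Either way the remainder of your argument closes as written. Note also that your Chebyshev splitting alone does not control $\int_{\{W_k<t\}}\eta_k^+$ on the small exceptional set; the H\"older inequality above is the clean mechanism and should replace it.
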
 	
\begin{proof}
See Proposition 5.3.13 in \cite{Tarantello_Book}.
\end{proof}

Consequently, as in \cite{Brezis_Merle}, 
it is natural to define $z_{0}\in \Omega$ a \underline{blow-up point} 
for the sequence $\eta_{k}$, if the following holds
\begin{equation*}
\; \exists \; z_{k}\longrightarrow z_{0}
\; \text{ with } \; 
\eta_{k}(z_{k}) \longrightarrow +\infty
\; \text{ as } \; 
k\longrightarrow \infty.
\end{equation*}
Moreover, we call the 
\underline{blow-up mass} of $\eta_{k}$ at $z_{0}$ the value:
\begin{equation}\label{1.4}
\sigma(z_{0})
=
\lim_{r \to 0 }
\underline{\lim}_{k\to +\infty} 
\int_{B_{r}(z_{0})} W_{k}e^{\eta_{k} }.			
\end{equation}

\begin{remark}\label{sigma_bigger_4_pi}
By virtue of Proposition \ref{prop_A} we know that 
$\sigma(z_{0})\geq 4\pi$, 
and so, by the assumption \eqref{1.c}, we know that $\eta_{k}$ can admit 
at most a \underline{finite} number of blow-up points.
\end{remark}

From now on we shall denote by $\mathcal{S}$ the finite set (possibly empty) 
of all blow-up points of $\eta_{k}$ in $\Omega$,
and refer to $\mathcal{S}$ as the \underline{blow-up set}.  

\

The following holds.
\begin{propx}\label{prop_B}
Under the assumptions of Proposition \ref{prop_A}, 
$\eta_{k}^{+}$ is uniformly bounded in $C^{0}_{loc}(\Omega \setminus \mathcal{S})$, along a subsequence.
\end{propx}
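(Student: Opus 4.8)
The plan is to run the Brezis-Merle concentration-compactness analysis, adapted to a weight $W_k$ that may vanish by means of assumption \eqref{1.1}. After passing to a subsequence (not relabelled), \eqref{1.c} allows us to assume that the bounded nonnegative measures $\mu_k:=W_ke^{\eta_k}\,dx$ converge weakly-$*$ to a bounded Radon measure $\mu$ on $\Omega$. Moreover, choosing $s>0$ small enough that $s/(1-s)\le\varepsilon_0$, Hölder's inequality combined with \eqref{1.c} and \eqref{1.1} gives $\int_\Omega e^{s\eta_k}=\int_\Omega(W_ke^{\eta_k})^{s}W_k^{-s}\le\big(\int_\Omega W_ke^{\eta_k}\big)^{s}\big(\int_\Omega W_k^{-s/(1-s)}\big)^{1-s}\le C$; since powers are dominated by exponentials, $\eta_k^{+}$ is then bounded in $L^p_{loc}(\Omega)$ for every $p<\infty$. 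This is precisely the ingredient which, in the non-degenerate case, is obtained for free from $W_k\ge a>0$.

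The core is a local $\varepsilon$-regularity statement, in the spirit of Proposition \ref{prop_A}: if $x_0\in\Omega$ satisfies $\mu(\{x_0\})<4\pi$, then $\eta_k^{+}$ is uniformly bounded on a neighbourhood of $x_0$. To prove it, since $\mu(\overline{B}_r(x_0))\downarrow\mu(\{x_0\})<4\pi$ as $r\downarrow0$, fix $\varepsilon>0$ and $r_0>0$ with $\overline{B}_{r_0}(x_0)\subset\Omega$ and $\mu(\overline{B}_{r_0}(x_0))<4\pi-\varepsilon$, so that $\int_{B_{r_0}(x_0)}W_ke^{\eta_k}<4\pi-\varepsilon$ for $k$ large. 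Using the $L^1_{loc}$-bound on $\eta_k^{+}$ and a Chebyshev argument, along a further subsequence we fix a radius $\rho\in(r_0/2,r_0)$ with $\int_{\partial B_\rho(x_0)}\eta_k^{+}\le C$, and we split $\eta_k=\eta_{1,k}+\eta_{2,k}$ on $B_\rho(x_0)$, where $-\Delta\eta_{1,k}=W_ke^{\eta_k}$ with $\eta_{1,k}=0$ on $\partial B_\rho(x_0)$, and $\eta_{2,k}$ is harmonic with boundary values $\eta_k$. By the maximum principle $\eta_{1,k}\ge0$, and by the Brezis-Merle inequality (Theorem 1 in \cite{Brezis_Merle}), since $\|W_ke^{\eta_k}\|_{L^1(B_\rho(x_0))}<4\pi-\varepsilon$, the family $e^{\eta_{1,k}}$ is bounded in $L^{1+\delta}(B_\rho(x_0))$ for some $\delta>0$. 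Since $\eta_{2,k}=\eta_k-\eta_{1,k}\le\eta_k$ we get $\eta_{2,k}^{+}\le\eta_k^{+}$, hence $\int_{\partial B_\rho(x_0)}\eta_{2,k}^{+}\le C$, and as $\eta_{2,k}$ is harmonic the Poisson formula yields $\eta_{2,k}^{+}\le C$ on $B_{\rho/2}(x_0)$. Therefore $e^{\eta_k}\le Ce^{\eta_{1,k}}$ is bounded in $L^{1+\delta}(B_{\rho/2}(x_0))$, so $W_ke^{\eta_k}$ is bounded there in $L^{1+\delta}$; writing $\eta_k$ on $B_{\rho/2}(x_0)$ as the Newtonian potential of $W_ke^{\eta_k}$ — bounded in $C^0$ since $W^{2,1+\delta}\hookrightarrow C^0$ in dimension two — plus a harmonic function whose positive part is again controlled via the Poisson formula and the $L^1_{loc}$-bound on $\eta_k^{+}$, we conclude $\eta_k^{+}\le C$ on a smaller ball centred at $x_0$.

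To finish, it suffices to verify that $\mu(\{x_0\})<4\pi$ at every $x_0\in\Omega\setminus\mathcal{S}$. Indeed, if $x_0\notin\mathcal{S}$, then by definition of the blow-up set (the negation of the blow-up condition, allowing subsequences) there exist $r_1>0$, $M>0$ and $k_0$ with $\sup_{B_{r_1}(x_0)}\eta_k\le M$ for all $k\ge k_0$; hence $W_ke^{\eta_k}\le b\,e^{M}$ on $B_{r_1}(x_0)$ for $k\ge k_0$, and by the lower semicontinuity of weak-$*$ convergence on open sets, $\mu(B_r(x_0))\le\liminf_k\int_{B_r(x_0)}W_ke^{\eta_k}\le b\,e^{M}|B_r(x_0)|$ for $r<r_1$, which tends to $0$ as $r\to0$; thus $\mu(\{x_0\})=0<4\pi$. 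Consequently the $\varepsilon$-regularity lemma applies at every point of $\Omega\setminus\mathcal{S}$, and covering a compact subset $K\subset\Omega\setminus\mathcal{S}$ by finitely many of the resulting neighbourhoods (and using a diagonal argument over a compact exhaustion of $\Omega\setminus\mathcal{S}$ to retain a single subsequence) gives the claimed uniform bound for $\eta_k^{+}$ in $C^0_{loc}(\Omega\setminus\mathcal{S})$.

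The step I expect to be the main obstacle is the $\varepsilon$-regularity lemma, and within it the control of the harmonic part $\eta_{2,k}$: this is exactly where condition \eqref{1.1} on $W_k$ (through the $L^1_{loc}$-bound on $\eta_k^{+}$ it produces) enters decisively, compensating for the loss of the uniform positivity of the weight available in the classical Brezis-Merle setting. Everything else — the measure extraction, the Brezis-Merle $L^1$ inequality, the elliptic bootstrap, and the covering argument — is routine.
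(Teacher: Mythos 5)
The paper does not actually prove this statement: it defers to Proposition 5.3.17 of \cite{Tarantello_Book}. Your argument is the standard Brezis--Merle concentration--compactness proof, correctly adapted to the degenerate weight, and I find no gap in it. The one genuinely non-routine ingredient --- extracting an $L^{p}_{loc}$ bound on $\eta_{k}^{+}$ from \eqref{1.c} and \eqref{1.1} via H\"older with exponent $s=\varepsilon_{0}/(1+\varepsilon_{0})$, which replaces the uniform positivity of the weight used in the classical setting --- is precisely the computation the paper itself displays later, in the proof of Proposition \ref{prop_0.3}, to control $\min_{\partial\Omega}\eta_{k}$; so your route is the intended one. Two points deserve a tightening sentence each, though neither is a real gap: (a) the radius produced by your Chebyshev/Fubini selection a priori depends on $k$, so either keep $\rho=\rho_{k}\in(r_{0}/2,r_{0})$ and observe that the Poisson-kernel estimate then gives a bound on the fixed ball $B_{r_{0}/4}(x_{0})\subset B_{\rho_{k}/2}(x_{0})$, or justify more carefully the extraction of a single $\rho$; (b) the implication ``$x_{0}\notin\mathcal{S}$ $\Rightarrow$ $\sup_{B_{r_{1}}(x_{0})}\eta_{k}\le M$ for all large $k$'' needs the diagonal extraction you mention only parenthetically, since the paper's definition of a blow-up point refers to the full sequence; this is harmless here because the conclusion is claimed only along a subsequence, but it should be spelled out.
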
	
\begin{proof}
See Proposition 5.3.17 in \cite{Tarantello_Book}.
\end{proof}

The information above allows us to provide the following "rough" description 
about the asymptotic behaviour of (a subsequence of) $\eta_{k}$. 
It is a general version
of analogous statements available in literature 
(under stronger  assumptions on the vanishing properties of $W_{k}$) 
starting with
\cite{Brezis_Merle},\cite{Li_Shafrir},\cite{Bartolucci_Tarantello_Comm_Math_Phys} 
and then \cite{Ohtusuka_Suzuki},\cite{Lin_Tarantello},\cite{Lee_Lin_Tarantello_Yang}.
See also 
\cite{Lee_Lin_Wei_Yang},\cite{Lee_Lin_Yang_Zhang},\cite{Lin_Wei_Yang_Zhang},\cite{Lin_Wei_Zhang},\cite{Lin_Wei_Zhao_1},\cite{Lin_Wei_Zhao_2} 
for analogous results in the context of Liouville type systems.

\begin{proposition}\label{prop_0.3}
Let $\eta_{k}$ satisfy \eqref{1.a}-\eqref{1.c} with 
$W_{k} \longrightarrow  W$ uniformly in 
$C^{0}_{loc}(\Omega)$, and assume that \eqref{1.1} holds.
Then (along a subsequence) $\eta_{k}$ satisfies one of the following alternatives, as $k\longrightarrow +\infty$:
\begin{enumerate}[label=(\roman*)]
\item $\eta_{k} \longrightarrow -\infty$ uniformly on compact sets of $\Omega$,
\item $\eta_{k} \longrightarrow \eta_{0} $ in $C^{2}_{loc}(\Omega)$, with $\eta_{0}$ satisfying:
$
\begin{cases}
-\Delta \eta_{0} = W e^{\eta_{0}} \; \text{ in } \; \Omega; \notag\\
\int_{\Omega} We^{\eta_{0}} \leq C,  
\end{cases}
$

\item (blow-up): There exists a finite set $\mathcal{S}\neq \emptyset$ 
of blow-up points of $\eta_{k}$ in $\Omega$ such that, 
as $k\longrightarrow +\infty$:
\begin{enumerate} 
\item[a)]
either  (blow-up with "concentration") 
\begin{align}
&
\eta_{k} \longrightarrow -\infty \; \text{ uniformly on compact sets of } \;\Omega \setminus \mathcal{S}
\notag
\\ 
&W_{k}e^{\eta_{k}} \rightharpoonup \sum_{q\in \mathcal{S}}\sigma(q)\delta_{q},
\text{ weakly in the sense of measures,} 
\label{1.6}
\end{align}	
\end{enumerate}
\begin{enumerate}
\item[b)] or (blow-up without concentration)
\begin{align}
&
\eta_{k} \longrightarrow \eta_{0} 
\; \text{ in } \;
C^{2}_{loc}(\Omega \setminus \mathcal{S});
\notag
\\
&
W_{k}e^{\eta_{k}} \rightharpoonup \sum_{q\in \mathcal{S}}\sigma(q)\delta_{q}+We^{\eta_{0}},
\text{ weakly in the sense of measures,} 
\notag
\end{align}	
 and 
\begin{equation*}
\begin{cases}
-\Delta \eta_{0}=We^{\eta_{0}}+\sum_{q\in \mathcal{S}}\sigma(q)\delta_{q}
\; \text{ in } \; \Omega; \\
\int_{\Omega} We^{\eta_{0}}\leq C.
\end{cases}	 
\end{equation*}
\end{enumerate}
Moreover, the blow-up mass $\sigma(q)\geq 4\pi,\; \forall \; q\in \mathcal{S}$.
\end{enumerate} 
\end{proposition}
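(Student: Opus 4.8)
The plan is to feed $\eta_{k}$ into Propositions \ref{prop_A}--\ref{prop_B} and then distinguish cases according to whether or not the blow-up set is empty. By Proposition \ref{prop_B}, along a subsequence (not relabelled) $\eta_{k}^{+}$ is uniformly bounded in $C^{0}_{loc}(\Omega\setminus\mathcal S)$, where $\mathcal S$ is the blow-up set; by Remark \ref{sigma_bigger_4_pi}, $\mathcal S$ is finite and $\sigma(q)\ge 4\pi$ for every $q\in\mathcal S$, which already gives the last assertion. On the open connected set $\Omega\setminus\mathcal S$ the right-hand side $f_{k}:=W_{k}e^{\eta_{k}}\ge 0$ is then bounded in $L^{\infty}_{loc}$ and, by \eqref{1.c}, in $L^{1}$. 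The key tool is a Harnack-type dichotomy: given a ball $B_{2\rho}(x_{0})\subset\Omega\setminus\mathcal S$, split $\eta_{k}=\phi_{k}+\psi_{k}$ on $B_{2\rho}(x_{0})$ with $-\Delta\phi_{k}=f_{k}$ in $B_{2\rho}(x_{0})$ and $\phi_{k}=0$ on $\partial B_{2\rho}(x_{0})$, so that $0\le\phi_{k}$ is bounded in $C^{1,\beta}$ by elliptic estimates, while $\psi_{k}$ is harmonic with $\psi_{k}=\eta_{k}-\phi_{k}\le\eta_{k}^{+}\le C$; applying the Harnack inequality to the nonnegative harmonic function $C-\psi_{k}$ shows that, along a subsequence, either $\eta_{k}$ is bounded in $L^{\infty}_{loc}(B_{2\rho}(x_{0}))$ or $\eta_{k}\to-\infty$ uniformly on compact subsets. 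Covering $\Omega\setminus\mathcal S$ by finite chains of such balls and using connectedness, exactly one of the following holds: (A) $\eta_{k}\to-\infty$ uniformly on compacts of $\Omega\setminus\mathcal S$; or (B) $\eta_{k}$ is bounded in $L^{\infty}_{loc}(\Omega\setminus\mathcal S)$, whence $f_{k}$ is bounded in $L^{\infty}_{loc}(\Omega\setminus\mathcal S)$ and, by elliptic regularity, $\eta_{k}\to\eta_{0}$ in $C^{2}_{loc}(\Omega\setminus\mathcal S)$ along a further subsequence, with $-\Delta\eta_{0}=We^{\eta_{0}}$ there and $\int_{\Omega}We^{\eta_{0}}\le C$ by Fatou and \eqref{1.c}.

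It remains to organize the conclusions. If $\mathcal S=\emptyset$ then alternative (A) is case (i) and alternative (B) is case (ii). If $\mathcal S\ne\emptyset$ we are in case (iii): since $\|f_{k}\|_{L^{1}(\Omega)}\le C$, along a further subsequence $f_{k}\,dx$ converges weakly-$*$ to a bounded nonnegative measure $\mu$ on $\Omega$, and after a further extraction one may assume that $\int_{B_{r}(q)}f_{k}$ converges for a dense set of radii $r$ and every $q\in\mathcal S$, so that $\mu(\{q\})=\sigma(q)$ in the notation of \eqref{1.4}. In case (A), $f_{k}\to 0$ locally uniformly on $\Omega\setminus\mathcal S$, hence $\mu=\sum_{q\in\mathcal S}\sigma(q)\delta_{q}$; together with $\eta_{k}\to-\infty$ on compacts of $\Omega\setminus\mathcal S$ this is precisely \eqref{1.6}, i.e. case (iii)(a). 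In case (B), $f_{k}\to We^{\eta_{0}}$ locally uniformly on $\Omega\setminus\mathcal S$, so $\mu=We^{\eta_{0}}\,dx+\sum_{q\in\mathcal S}\sigma(q)\delta_{q}$, which is the asserted weak limit in case (iii)(b).

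The remaining point, which I expect to be the main technical obstacle, is to identify the equation for $\eta_{0}$ on all of $\Omega$ in case (iii)(b). Fix $q\in\mathcal S$ and $\rho>0$ with $\overline{B_{\rho}(q)}\cap\mathcal S=\{q\}$, and split $\eta_{k}=\Phi_{k}+\Psi_{k}$ on $B_{\rho}(q)$ with $-\Delta\Phi_{k}=f_{k}$ in $B_{\rho}(q)$, $\Phi_{k}=0$ on $\partial B_{\rho}(q)$, and $\Psi_{k}$ harmonic. Since $\partial B_{\rho}(q)\subset\Omega\setminus\mathcal S$, where $\eta_{k}\to\eta_{0}$ uniformly, the harmonic functions $\Psi_{k}$ are \emph{uniformly} bounded on $B_{\rho}(q)$ (a harmonic function is controlled by its boundary values) and converge, along a subsequence, to a harmonic function $\Psi$; this is exactly where it is essential that there is no blow-up on $\partial B_{\rho}(q)$. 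On the other hand, from $f_{k}\,dx\rightharpoonup\nu:=We^{\eta_{0}}\mathbf 1_{B_{\rho}(q)\setminus\{q\}}\,dx+\sigma(q)\delta_{q}$ and standard Newtonian potential estimates, $\Phi_{k}\to\Phi$ in $W^{1,p}_{loc}(B_{\rho}(q))$ for every $p<2$, with $-\Delta\Phi=\nu$. Hence $\eta_{k}\to\Phi+\Psi$ in $L^{1}_{loc}(B_{\rho}(q))$; comparing with the $C^{2}_{loc}$ convergence on $B_{\rho}(q)\setminus\{q\}$ identifies $\eta_{0}$ with $\Phi+\Psi\in L^{1}_{loc}(B_{\rho}(q))$ and gives $-\Delta\eta_{0}=\nu=We^{\eta_{0}}+\sigma(q)\delta_{q}$ in $B_{\rho}(q)$. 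Carrying this out at every $q\in\mathcal S$ and combining with the equation already established on $\Omega\setminus\mathcal S$ yields $-\Delta\eta_{0}=We^{\eta_{0}}+\sum_{q\in\mathcal S}\sigma(q)\delta_{q}$ in $\Omega$. Besides this potential-theoretic step, the only other delicate point is the uniform validity of the Harnack dichotomy along chains of balls, ensuring it is genuinely all-or-nothing on the connected set $\Omega\setminus\mathcal S$.
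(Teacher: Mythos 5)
Your argument is correct, but it reaches the dichotomy by a genuinely different route than the paper. The paper works globally: it sets $\phi_{k}=\eta_{k}-\min_{\partial\Omega}\eta_{k}$, uses the Green representation \eqref{1.10} on all of $\Omega$ (this is where the bounded oscillation on $\partial\Omega$ enters) to get $\phi_{k}\to\phi_{0}$ in $C^{1,\alpha}_{loc}(\Omega\setminus\mathcal S)$, and then exploits \eqref{1.1} directly through a Jensen--H\"older estimate, $\min_{\partial\Omega}\eta_{k}=\min_{\Omega}\eta_{k}\leq C\bigl(\int_{\Omega}W_{k}e^{\eta_{k}}\bigr)^{s_{0}}\bigl(\int_{\Omega}W_{k}^{-\varepsilon_{0}}\bigr)^{1-s_{0}}\leq C$, so that the entire trichotomy reduces to whether the single scalar sequence $\min_{\partial\Omega}\eta_{k}$ stays bounded or diverges to $-\infty$ (see \eqref{two_alternatives}). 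You instead run the classical Brezis--Merle local decomposition (Dirichlet potential plus harmonic part) and a Harnack inequality on $C-\psi_{k}$, propagated along chains of balls; the hypotheses \eqref{1.1} and the boundary oscillation bound enter your proof only implicitly, through Propositions \ref{prop_A} and \ref{prop_B}. What the paper's route buys: the all-or-nothing character is automatic (no connectedness of $\Omega\setminus\mathcal S$ is needed, whereas your chain argument tacitly assumes $\Omega$ is connected), and the representation \eqref{1.10} simultaneously delivers the limiting equation across the points of $\mathcal S$, so the separate potential-theoretic identification of $-\Delta\eta_{0}$ near each $q\in\mathcal S$ that occupies your last paragraph is subsumed. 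What your route buys: it is more self-contained (no Green function of $\Omega$, only interior estimates) and makes explicit the measure-theoretic bookkeeping ($\mu(\{q\})=\sigma(q)$ after extracting radii, the splitting $\Phi_{k}+\Psi_{k}$ near $q$ with $\Psi_{k}$ controlled by its boundary values on $\partial B_{\rho}(q)\subset\Omega\setminus\mathcal S$) that the paper leaves to the reader. One cosmetic remark: like the paper, you only really establish $C^{0}_{loc}$ (or $C^{1,\beta}_{loc}$) convergence off $\mathcal S$ from the stated hypotheses, since $W_{k}\to W$ in $C^{0}_{loc}$ alone does not give the H\"older control on the right-hand side needed for genuine $C^{2}_{loc}$ convergence; this is a gap in the statement rather than in your argument.
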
	

\begin{proof}
As already mentioned, the claimed results are available in literature 
under various assumptions on $W_{k}$, and for completeness we highlight here the main 
arguments involved.
Firstly we observe that the sequence:
\begin{equation*}
\phi_{k}=\eta_{k} -\min_{\partial \Omega}\eta_{k} 
\end{equation*}
is uniformly bounded in $\partial \Omega$ and by the Green representation formula
we have:
\begin{equation}\label{1.10} 
\phi_{k}(x) 
=
\frac{1}{2\pi}\int_{\Omega} \ln(\frac{1}{\vert x-y \vert })W_{k}e^{\eta_{k}}dy+\Psi_{k}
\end{equation}
with suitable
$\Psi_{k} \longrightarrow  \Psi$ uniformly in $C^{2}(\Omega)$. 

\
Recalling that $\mathcal{S}$ is the (possibly empty) blow-up set of 
$\eta_{k} $ in $\Omega$, by Proposition \ref{prop_B} we know that $\eta_{k}^{+}$ is uniformly bounded on compact sets of $\Omega \setminus \mathcal{S}$. 
Therefore, we can use well known potential estimates to conclude that (along a subsequence):
\begin{equation}\label{1.11}
\phi_{k} \longrightarrow  \phi_{0}
\; \text{ uniformly in } \; 
C^{1,\alpha}_{loc}(\Omega \setminus \mathcal{S}).
\end{equation}
Next, letting 
$
s_{0}=\frac{\varepsilon_{0}}{1+\varepsilon_{0}}\in (0,1)
$
with $\varepsilon_{0}>0$ in \eqref{1.1}, we check:
\begin{equation*}
\begin{split}
\min_{\partial \Omega}\eta_{k} 
= &
\min_{\Omega}\eta_{k} 
\leq
\fint_{\Omega} \eta_{k}^{+}
=
\frac{1}{s_{0}}\fint s_{0}\eta_{k}^{+}
\leq
C\int_{\Omega} 	e^{s_{0}\eta_{k}} \\
\leq &
C(\int_{\Omega} W_{k}e^{\eta_{k}})^{s_{0}}
(\int_{\Omega} \frac{1}{W_{k}^{\varepsilon_{0}}})^{1-s_{0}}
\leq C.
\end{split}	
\end{equation*}
So (along a subsequence) 
\begin{equation}\label{two_alternatives}
\; \text{ either  } \; 
\min_{\partial \Omega}\eta_{k} \longrightarrow -\infty
\; \text{ on compact sets of } \; 
\Omega \setminus \mathcal{S}
\; \text{ or } \;
\vert \min_{\partial \Omega} \eta_{k}\vert \leq C  .
\end{equation}
If the first alternative holds in \eqref{two_alternatives}, 
then (along a subsequence) we deduce alternative (i) in case 
$\mathcal{S}=\emptyset$, or in case
$\mathcal{S}\neq \emptyset$ then \textit{blow-up with concentration} in alternative (iii). 

On the contrary in case the second alternative hols in \eqref{two_alternatives}, 
then by virtue of \eqref{1.11}, we find (along a subsequence) that
\begin{equation*}
\eta_{k} \longrightarrow \eta_{0} 
\; \text{ uniformly in } \;  
C^{0}_{loc}(\Omega \setminus \mathcal{S}). 
\end{equation*}
Hence we conclude that either (ii) or alternative b) of (iii) hold 
according to whether $\mathcal{S}=\emptyset$ or $\mathcal{S}\neq \emptyset$, 
and in view of Remark \ref{sigma_bigger_4_pi} the proof is completed.  
\end{proof}

As discussed in \cite{Lin_Tarantello} and \cite{Lee_Lin_Tarantello_Yang}, 
all the alternatives of
Proposition \ref{prop_0.3} can actually occur. 
When alternative (iii) holds, then to better understand the behavior of
$\eta_{k}$ around a blow-up point $q\in \mathcal{S}$,
it is crucial to identify the specific value of the blow-up mass 
$\sigma(q)$ in \eqref{1.4}.
To this purpose, for the weight function $W_{k}$, 
we shall work under the following assumption,
\begin{equation}\label{1.12}
\vert \nabla W_{k} \vert \leq A
\; \text{ and } \;
W_{k}
\longrightarrow 
W
\; \text{ in } \;
C^{0}_{loc}(\Omega).  
\end{equation}
According to the results in \cite{Li_Shafrir} and 
\cite{Bartolucci_Tarantello_Comm_Math_Phys} ,
the value of $\sigma(q)$ depends on whether the limiting function
$W$ in \eqref{1.12} vanishes or not at $q \in \mathcal{S}$. 
If locally around $q\in \mathcal{S}$ there holds:
\begin{equation}\label{1.17.a} 
W_{k}(x)=\vert x-p_{k} \vert^{2\alpha}h_{k}(x)
\; \text{ in } \; 
B_{r}(q),\;
\alpha \geq 0,\; 
p_{k}\longrightarrow q 
, 
\end{equation} 
\begin{equation}\label{1.14}
0<a\leq h_{k} \leq b 
\; \text{ and } \; 
\vert \nabla h_{k} \vert \leq A
\; \text{ in } \; B_{r}
\end{equation}
then we know the following:
\begin{thmx}(\cite{Bartolucci_Tarantello_Comm_Math_Phys},\cite{Li_Shafrir}) \label{theorem_2_new}
If $\eta_{k}$ in Proposition \ref{prop_0.3} satisfies alternative (iii) and for 
$q \in \mathcal{S}$ the weight function $W_{k}$ satisfies 
\eqref{1.17.a} and \eqref{1.14} in $B_{r}(q)$, 
then only alternative a) occurs, 
namely we have blow-up with "concentration" as described in \eqref{1.6} and   
\begin{enumerate}[label=(\roman*)]
\item  if $\alpha = 0$ in \eqref{1.17.a} then $\sigma(q)=8\pi$,
\item  if $\alpha>0$ in \eqref{1.17.a} then $\sigma(q)=8\pi(1+\alpha)$. 
\end{enumerate}
\end{thmx}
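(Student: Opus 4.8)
The plan is to determine the exact value of $\sigma(q)$ by means of a Pohozaev identity, and then to rule out alternative b) of Proposition~\ref{prop_0.3} a posteriori. After a translation we take $q=0$; by Proposition~\ref{prop_B}, shrinking $r$ if necessary, we may assume that $0$ is the \emph{only} blow-up point of $\eta_{k}$ in $\overline{B_{r}}$, so that $\eta_{k}^{+}$ is uniformly bounded on compact subsets of $B_{r}\setminus\{0\}$, while by Remark~\ref{sigma_bigger_4_pi} we already know $\sigma(0)\ge 4\pi$. For small fixed $\delta\in(0,r)$ set $m_{k}(\delta)=\int_{B_{\delta}}W_{k}e^{\eta_{k}}$, so that along the subsequence $m_{k}(\delta)\to m(\delta)$ and $m(\delta)\to\sigma(0)$ as $\delta\to 0^{+}$.

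First I would record the boundary behaviour of $\eta_{k}$ on $\partial B_{\delta}$. Combining the Green representation~\eqref{1.10}, the bound on $\eta_{k}^{+}$ away from $0$, and the fact that the mass of $W_{k}e^{\eta_{k}}$ in $B_{\delta}$ concentrates at $0$, one gets, along the subsequence, that $\eta_{k}$ minus a suitable additive constant converges in $C^{1}_{loc}(B_{r}\setminus\{0\})$ to a function equal, near $0$, to $\frac{\sigma(0)}{2\pi}\ln\frac{1}{|x|}$ plus a $C^{1}$ term; in particular
\[
\nabla\eta_{k}(x)=-\frac{\sigma(0)}{2\pi}\,\frac{x}{|x|^{2}}+O(1)\qquad\text{on }\partial B_{\delta},\ \text{as }k\to\infty,
\]
the $O(1)$ being independent of $\delta$. (This holds under both alternatives a) and b): in case b) the limit $\eta_{0}$ itself carries a logarithmic singularity of strength $\sigma(0)/2\pi$ at $0$.)

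The core step is the Pohozaev identity for $-\Delta\eta_{k}=W_{k}e^{\eta_{k}}$ on the ball $B_{\delta}(p_{k})$ — centred at the pole $p_{k}$ when $\alpha>0$, so that $|x-p_{k}|^{2\alpha}$ is there exactly $2\alpha$-homogeneous (any centre works when $\alpha=0$) — tested against $(x-p_{k})\cdot\nabla\eta_{k}$. By the gradient asymptotics above, the boundary terms on $\partial B_{\delta}(p_{k})$ converge, as $k\to\infty$ and then $\delta\to 0^{+}$, to $-\sigma(0)^{2}/4\pi$; the bulk term produced by $\nabla\cdot\bigl((x-p_{k})\,|x-p_{k}|^{2\alpha}h_{k}\bigr)=(2+2\alpha)W_{k}+|x-p_{k}|^{2\alpha}\bigl((x-p_{k})\cdot\nabla h_{k}\bigr)$ splits into $(2+2\alpha)m_{k}(\delta)\to(2+2\alpha)\sigma(0)$ and a remainder bounded by $\frac{A}{a}\,\delta\int_{B_{\delta}}W_{k}e^{\eta_{k}}=O(\delta)$ — this is precisely where the hypotheses $|\nabla h_{k}|\le A$ and $h_{k}\ge a>0$ of~\eqref{1.14} are used — while the remaining boundary integral $\delta\int_{\partial B_{\delta}(p_{k})}W_{k}e^{\eta_{k}}$ is $o(1)$ since $0$ is the only blow-up point. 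Passing to the limit yields $\frac{\sigma(0)^{2}}{4\pi}=(2+2\alpha)\sigma(0)$, hence $\sigma(0)\in\{0,\,8\pi(1+\alpha)\}$, and since $\sigma(0)\ge 4\pi>0$ we conclude $\sigma(0)=8\pi(1+\alpha)$, in particular $\sigma(0)=8\pi$ when $\alpha=0$. (Note that this quadratic relation by itself rules out multi-bubble configurations at $0$, which would give mass $8\pi m(1+\alpha)$ with $m\ge 2$; this is what distinguishes the controlled weight here from the general situation of~\cite{Li_Shafrir}, where only $\sigma(0)\in 8\pi\mathbb{N}$ holds.)

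It remains to exclude alternative b). If it occurred, $\eta_{0}$ would solve $-\Delta\eta_{0}=We^{\eta_{0}}+\sigma(0)\delta_{0}$ in $B_{r}$ with $\int_{B_{r}}We^{\eta_{0}}\le C$, forcing $\eta_{0}(x)=-\frac{\sigma(0)}{2\pi}\ln|x|+O_{loc}(1)$ near $0$; but then, since $W(x)=|x|^{2\alpha}h(x)$ with $h(0)>0$ and $\sigma(0)=8\pi(1+\alpha)$, the density $We^{\eta_{0}}$ behaves like $|x|^{2\alpha-\sigma(0)/2\pi}=|x|^{-2-4\alpha}$ near $0$, which is not locally integrable, contradicting $\int_{B_{r}}We^{\eta_{0}}\le C$. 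Hence only alternative a) occurs, which proves the theorem. I expect the main obstacle to be the rigorous derivation of the boundary gradient asymptotics for $\eta_{k}$ on $\partial B_{\delta}$, namely the potential-theoretic fact that, up to an additive constant, $\eta_{k}$ converges on $B_{r}\setminus\{0\}$ to the logarithmic potential of $\sigma(0)\,\delta_{0}$ plus a $C^{1}$ remainder; granting this, the remaining steps are routine.
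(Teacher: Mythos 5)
The paper offers no proof of Theorem \ref{theorem_2_new}: it is quoted as a known result from \cite{Bartolucci_Tarantello_Comm_Math_Phys} and \cite{Li_Shafrir}, so there is no internal argument to compare against line by line. Your reconstruction is the standard proof behind those references in the bounded-oscillation setting, and it is essentially correct: the Green representation \eqref{1.10} (available precisely because $\max_{\partial\Omega}\eta_{k}-\min_{\partial\Omega}\eta_{k}\leq C$ is assumed) yields the boundary gradient asymptotics --- the same device the paper itself uses at \eqref{2.9}--\eqref{2.10} and in the Appendix to prove \eqref{2.33} --- the Pohozaev identity on $B_{\delta}(p_{k})$ then gives $\sigma(q)^{2}=8\pi(1+\alpha)\sigma(q)$, hence $\sigma(q)=8\pi(1+\alpha)$ since $\sigma(q)\geq 4\pi$, and alternative b) is excluded by the non-integrability of $W e^{\eta_{0}}$ near $q$, which is the same mechanism the paper invokes to derive \eqref{alpha_q}. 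Three minor remarks. First, the exponent in your exclusion of b) should be $2\alpha-\sigma(q)/2\pi=-4-2\alpha$ rather than $-2-4\alpha$; both are $<-2$, so the contradiction stands. Second, the claim that the error in the gradient expansion is $O(1)$ uniformly in $\delta$ is more than you need and more than the naive splitting of the Newtonian potential gives directly in case b); an error of order $o(1/\delta)$ (letting $k\to\infty$ first, then $\delta\to 0$) is what comes out and is enough for the boundary terms, while the $O(1)$ form is exactly what the paper records in \eqref{2.10}. Third, your parenthetical attributing the sharp value $8\pi(1+\alpha)$ (versus the mere $8\pi\mathbb{N}$ of \cite{Li_Shafrir}) to the controlled weight is slightly misplaced: the decisive extra hypothesis is the bounded oscillation on $\partial\Omega$ together with $\vert\nabla h_{k}\vert\leq A$, without which the representation \eqref{1.10}, and hence the whole Pohozaev computation, is unavailable.
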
	

\

In this note, we shall focus to the case where,
for $q\in \mathcal{S}$,
we have $W(q)=0$ and $q$ is the accumulation point of different zeroes of $W_{k}$ 
(collapsing of zeroes). 

In view of the applications we have in mind, 
we consider the case where the zeroes of $W_{k}$  have integral multiplicity,
and more precisely  for $q\in \mathcal{S}$ and $r>0$ sufficiently small, 
we assume:
\begin{align}
& W_{k}(x)  
=
\left( 
\Pi^{s}_{j=1}\vert x-p_{j,k} \vert^{2\alpha_{j}}
\right)
h_{k}(x), 
\;
x\in B_{r}(q),
\;
\alpha_{j}\in \mathbb{N}
,\;
s\geq 2
\label{1.15}
\\ 
&
p_{j,k}\neq p_{l,k} \; \text{ for } \; j \neq l
\; \text{ and } \;  
p_{j,k}\longrightarrow q, 
\; \text{ as } \; k \longrightarrow +\infty,
\; \forall \; j=1,\ldots,s.
\label{1.16}
\end{align}
In this case 
we start by proving a \textit{quantization} property for the blow-up mass
in \eqref{1.4},
which completes the result in \cite{Lee_Lin_Tarantello_Yang}  
(where only two zeroes of $W_{k}$ coalesce at $q$)
and follows as in 
\cite{Lee_Lin_Wei_Yang}  and \cite{Lee_Lin_Yang_Zhang},  
where analogous information were deduced in the context of systems.

\begin{thm}\label{thm_1}
Suppose that $\eta_{k}$ in Proposition \ref{prop_0.3} satisfy alternative (iii)
and for $q\in \mathcal{S}$ assume \eqref{1.14}-\eqref{1.16}. 
Then
$
\sigma(q)\in 8\pi \N.
$
\end{thm}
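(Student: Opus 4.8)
### Proof plan

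The plan is to reduce the collapsing-zeroes situation to the already-understood non-collapsing case of Theorem \ref{theorem_2_new} by a blow-up/rescaling argument, and then to iterate. First I would work near $q$, which after translation I take to be the origin, and rescale around the maximum point of $\eta_{k}$: let $M_{k}=\max_{\overline{B}_{r}}\eta_{k}\to+\infty$, attained at $x_{k}\to q$, and introduce the scale $\varepsilon_{k}>0$ defined by $\varepsilon_{k}^{2}W_{k}(x_{k})e^{M_{k}}=1$ (or a similar normalization using $\sup W_{k}e^{\eta_{k}}$ when $W_{k}(x_{k})$ degenerates). Set $\widehat{\eta}_{k}(y)=\eta_{k}(x_{k}+\varepsilon_{k}y)+2\ln\varepsilon_{k}$ and $\widehat{W}_{k}(y)=W_{k}(x_{k}+\varepsilon_{k}y)$, so that $-\Delta\widehat{\eta}_{k}=\widehat{W}_{k}e^{\widehat{\eta}_{k}}$ with $\widehat{\eta}_{k}(0)=0=\max$ and bounded mass. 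The behaviour of the rescaled poles $\widehat{p}_{j,k}=(p_{j,k}-x_{k})/\varepsilon_{k}$ then dictates everything: after passing to a subsequence, each $|\widehat{p}_{j,k}|$ either stays bounded (and converges to some $\widehat{p}_{j}$) or diverges to $+\infty$.

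The key steps, in order. \emph{Step 1:} establish that the rescaled sequence converges in $C^{2}_{loc}$ of the plane minus the (finitely many) limit points of the $\widehat{p}_{j,k}$ that stay bounded, to a limiting solution $\widehat{\eta}_{0}$ of $-\Delta\widehat{\eta}_{0}=\widehat{W}_{0}e^{\widehat{\eta}_{0}}$ on $\R^{2}$, where $\widehat{W}_{0}(y)=(\Pi_{j\in J}|y-\widehat{p}_{j}|^{2\alpha_{j}})h_{0}$ with $h_{0}>0$ a constant and $J$ the index set of bounded rescaled poles; the poles with $|\widehat{p}_{j,k}|\to\infty$ simply disappear in the limit because $|y-\widehat{p}_{j,k}|^{2\alpha_{j}}\to\infty$ locally. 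One must check the entire-plane mass $\int_{\R^{2}}\widehat{W}_{0}e^{\widehat{\eta}_{0}}$ is finite (from \eqref{1.c} after rescaling) and that the limit is nontrivial, i.e. $\widehat{\eta}_{0}\not\equiv-\infty$; here the origin is a maximum with value $0$, but one should rule out bubbling escaping to infinity by using the $L^{\infty}$ bound on $W_{k}$ together with \eqref{1.14}. \emph{Step 2:} classify the limit $\widehat{\eta}_{0}$. The total mass of such an entire solution with finitely many integer-order singularities is forced to be a sum of the local Pohozaev values: using the classification/Pohozaev identity for singular Liouville equations on $\R^{2}$ (as in Chen--Li / Prajapat--Tarantello and the references invoked around Theorem \ref{theorem_2_new}), each regular bubble contributes $8\pi$ and each singular point of order $\alpha_{j}$ contributes $8\pi(1+\alpha_{j})$, with $\alpha_{j}\in\N$, so $\int_{\R^{2}}\widehat{W}_{0}e^{\widehat{\eta}_{0}}\in 8\pi\N$. \emph{Step 3 (the iteration):} the rescaled limit accounts only for the mass concentrating at the ``finest'' scale. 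One covers each limit point $\widehat{p}_{j}$ of the bounded rescaled poles by a small ball, subtracts off the mass captured there, and observes that what remains localized near each such point is again a sequence of the form \eqref{1.a}--\eqref{1.c} with weight of the type \eqref{1.15} but with \emph{strictly fewer} coalescing poles (those $p_{l,k}$ whose rescaled images also converge to $\widehat{p}_{j}$), plus possibly the single term $|x-p_{j,k}|^{2\alpha_{j}}$. By the inductive hypothesis on the number $s$ of collapsing poles — with the base case $s=1$ being exactly Theorem \ref{theorem_2_new}, and the two-pole case being \cite{Lee_Lin_Tarantello_Yang} — the mass collected at each such point lies in $8\pi\N$. \emph{Step 4:} add up. The total blow-up mass $\sigma(q)$ equals the mass of the rescaled limit (an element of $8\pi\N$) plus the finitely many ``residual'' masses at the points $\widehat{p}_{j}$ (each in $8\pi\N$ by induction), hence $\sigma(q)\in 8\pi\N$.

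The main obstacle I expect is Step 3: making the ``peeling off'' of scales rigorous. One must show that after removing the mass of the rescaled bubble, the leftover mass is genuinely concentrated at the discrete set $\{\widehat{p}_{j}:j\in J\}$ (no mass escapes to spatial infinity at the rescaled level and none is lost in the annular regions between scales), and that when one zooms back in around each $\widehat{p}_{j}$ the structural hypotheses \eqref{1.14}--\eqref{1.16} are inherited with a smaller pole count — in particular that the non-collapsing poles merely contribute a smooth positive factor to the local $h_{k}$. This is the standard but delicate ``neck analysis'' (controlling $\int W_{k}e^{\eta_{k}}$ over dyadic annuli, e.g. via the Pohozaev identity or a Harnack-type estimate in the necks, to exclude mass in the transition regions), and it is exactly the point where one needs the lower bound $W_{k}\geq c|x|^{2\alpha}$-type control coming from $h_{k}\geq a>0$ and the integrability condition \eqref{1.1}. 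Once the neck has no mass, additivity of $\sigma(q)$ over the finitely many scales and limit points is immediate, and the quantization follows by induction on $s$.
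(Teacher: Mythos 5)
Your overall skeleton (rescale, classify the limit, induct on the number of collapsing poles) resembles the paper's proof of Theorem \ref{thm_2.1}, but two of your steps fail as stated, and they are exactly where the content of the theorem lies.

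First, your Step 4 assumes that $\sigma(q)$ is the sum of the finest-scale bubble mass and of residual masses sitting at discrete points, and your closing paragraph proposes to prove that the necks between scales carry no mass. In the collapsing-pole setting this is false: the neck mass is generically nonzero, which is precisely the ``blow-up without concentration'' phenomenon the paper is built around. The paper rescales at the pole scale $\tau_{k}=\max_{j}\vert p_{j,k}\vert$ (see \eqref{2.11b},\eqref{2.12}), calls $\mu$ the mass seen at that scale (\eqref{2.19}) and $m=\sigma(q)/2\pi$ the mass at the original scale (\eqref{2.8}); the Pohozaev identity on the annulus $B_{r}\setminus B_{R\tau_{k}}$ then yields not ``$m=\mu$'' but the relation $m^{2}-\mu^{2}=4(1+\alpha)(m-\mu)$ of \eqref{2.33}. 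The alternative $m=4(1+\alpha)-\mu$ genuinely occurs (e.g.\ when the rescaled sequence tends to $-\infty$ on compact sets, so $\mu=0$ and the \emph{entire} mass $m=4(1+\alpha)$ lives in the neck), so no vanishing-neck lemma can hold, and quantization follows not from additivity but from combining \eqref{2.33} with $\mu\in 4\N\cup\{0\}$ and $\alpha\in\N$. Relatedly, your Step 2 is not a classification statement: for entire solutions with several integer-order singular points there is no explicit classification, and the quantization $\int_{\R^{2}}e^{u}\in 8\pi\N$ is the nontrivial Lemma \ref{lem_2.3} of Lin--Wei--Yang--Zhang, which is what the paper actually invokes at this stage.

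Second, your induction has no mechanism forcing the number of coalescing poles to strictly decrease. If you rescale around the maximum point $x_{k}$ of the solution, nothing prevents all $s$ rescaled poles from converging again to a single point of the rescaled picture, in which case the inductive hypothesis does not apply. The paper arranges the strict decrease by first translating so that $p_{1,k}=-p_{2,k}$ (see \eqref{2.31}) and rescaling by $\tau_{k}=\max_{j}\vert p_{j,k}\vert$: then at least one rescaled pole has modulus $1$ and two of them are antipodal, so at any blow-up point of the rescaled sequence at most $s-1$ poles can collapse. Without some such normalization your Step 3 does not close.
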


In order to prove Theorem \ref{thm_1} we need some preliminaries. 
First of all, we can "localize" our analysis around the blow-up point $q$.
Indeed by means of \eqref{1.10}, 
we have 
\begin{equation*}
\eta_{k}(x) 
=
\min_{\partial \Omega} \eta_{k}
+
\frac{1}{2\pi}\int_{\Omega} \ln(\frac{1}{\vert x-y \vert })W_{k}e^{\eta_{k}}dy+\Psi_{k}
\end{equation*}
with $\Psi_{k}$ uniformly bounded in $C^{2}(\Omega)$.
Hence we easily check (as in \cite{Bartolucci_Chen_Lin_Tarantello}) 
that $\eta_{k}$ satisfies the bounded oscillation property around $q$.

Thus, after a translation, we can take $q=0$, and for $r>0$ sufficiently small, 
we need to analyze a sequence
$
\xi_{k}\in C^{2}(B_{r})\cap C^{0}(\overline{B}_{r})
$
satisfying: 
\begin{DispWithArrows}<>
& 
-\Delta \xi_{k}
=
(\Pi^{s}_{j=1}\vert x-p_{j,k} \vert^{2\alpha_{j}} ) 
h_{k}(x)e^{\xi_{k}}   
\; \text{ in  } \; 
B_{r}    
\label{2.0} \\
&  
\max_{\partial B_{r}} \xi_{k} 
-
\min_{\partial B_{r}}\xi_{k}  
\leq 
C     
\label{2.1}  \\
& 
\max_{\overline{B}_{r}}\xi_{k}
=\xi_{k}(x_{k})
\longrightarrow  + \infty,
\; \text{ and } \; 
x_{k}\longrightarrow 0,
\; \text{ as } \; 
k\longrightarrow +\infty
\label{2.2},
\end{DispWithArrows}
where 
$s \geq 2,\, \alpha_{j} \in \N$,
$h_{k}$ satisfies \eqref{1.14} in $B_{r}$,
\begin{equation}\label{2.3}
p_{j,k}\neq p_{l,k} \; \text{ for } \; j\neq l,\;
p_{j,k}\longrightarrow 0 
\; \text{ as } \; 
k\longrightarrow +\infty,
\;\forall\; j=1,\ldots,s,
\end{equation}
and
\begin{equation}
\int_{B_{r}} W_{k}e^{\xi_{k}}\leq C,
\; \text{ where } \; 
W_{k}(x)=(\Pi_{j=1}^{s}\vert x-p_{j,k} \vert^{2\alpha_{j}})
h_{k}(x).
\end{equation}
Finally, without loss of generality, in view of \eqref{1.14}, we can assume also that,
\begin{equation}\label{2.6}
h_{k}\longrightarrow h,
\; \text{ in } \; 
C^{0}_{loc}(B_{r})
\; \text{ with } \; 
h(0)=1
\end{equation}
and zero is the \underline{only} blow-up point of $\xi_{k}$ in $B_{r}$,
that is:
\begin{equation}\label{2.7}
\; \forall \; 0 < \delta < r 
\; \exists \; C_{\delta}>0
\; : \; 
\max_{\overline{B}_{r}\setminus B_{\delta}}\xi_{k}
\leq 
C_{\delta}. 
\end{equation}

Clearly, under the above assumptions, we have that \eqref{1.12} holds with 
\begin{equation}\label{W_zero}
W(x)=\vert x \vert^{2\alpha}h(x)
,\;
h(0)=1
\; \text{ and } \;
\alpha=\sum_{j=1}^{s}\alpha_{j} \in \N . 
\end{equation}
Furthermore,  Proposition \ref{prop_0.3} applies to 
$\xi_{k}$ in $B_{r}$,
and by \eqref{2.2} and \eqref{2.7} we know that 
(along a subsequence) $\xi_{k}$ satisfies alternative (iii) with $\mathcal{S}=\{ 0 \} $,
and we have 
\begin{equation}\label{2.8}
m:=
\lim_{\delta \searrow 0  }
\underset{k \to +\infty }{\underline{\lim}}
\frac{1}{2\pi}\int_{B_{\delta}(0)}W_{k}e^{\xi_{k}} \geq 2. 
\end{equation}
Since, as above, by Green representation formula, we can write:
\begin{equation}\label{2.9}
\xi_{k}(x)
=
\min_{\overline{B}_{r}} \xi_{k}
+
\frac{1}{2\pi}
\int_{B_{r}} \ln(\frac{1}{\vert x-y \vert })W_{k}e^{\xi_{k}}dy
+
\psi_{k},
\;
x\in B_{r},
\end{equation}
with $\psi_{k}$ uniformly bounded in $C^{2}(B_{r})$, 
we can use the information in part (iii) of Proposition \ref{prop_0.3}, 
to deduce that (along a subsequence) the following holds:
\begin{equation}\label{2.10}
\nabla \xi_{k}
\longrightarrow -m\frac{x}{\vert x \vert^{2}}
+
\nabla \phi,
\; \text{ uniformly in } \;
C^{1}_{loc}(B_{r}\setminus \{ 0 \} ),
\; \text{ as } \; 
k\longrightarrow +\infty, 
\end{equation}
with a suitable $\phi$ smooth in $B_{r}$.

\

At this point, to establish Theorem \ref{thm_1}, it  suffices to prove the following:
\begin{thm}\label{thm_2.1}
Under the above assumptions, in \eqref{2.8} we have that $m\in 4\N$. 
\end{thm}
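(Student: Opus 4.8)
The plan is to carry out a Pohozaev-type argument on shrinking balls combined with the "blow-up without concentration'' analysis to pin down $m$ to multiples of $4$. The key point is that the limiting profile equation after rescaling will involve the full singular weight $\lvert x\rvert^{2\alpha}$ with $\alpha=\sum_j \alpha_j\in\mathbb N$, and Theorem \ref{theorem_2_new} (or rather its proof mechanism via Pohozaev identities) shows that a blow-up mass of $8\pi(1+\alpha)$ is forced when a single zero of integral multiplicity is present. The subtlety here is that the $s\ge 2$ zeroes $p_{j,k}$ are genuinely distinct, so the weight $W_k$ is \emph{not} radial, and one cannot directly invoke a classification theorem. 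Instead one must run a Pohozaev identity on $B_\delta$ and let $k\to\infty$ first (using \eqref{2.10}), then $\delta\to 0$.

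First I would record the Pohozaev identity for $\xi_k$ on $B_\delta$. Multiplying \eqref{2.0} by $x\cdot\nabla\xi_k$ and integrating over $B_\delta$ gives, after the usual integration by parts,
\begin{equation*}
\int_{B_\delta}\bigl(x\cdot\nabla W_k\bigr)e^{\xi_k}+2\int_{B_\delta}W_k e^{\xi_k}
=
\delta\int_{\partial B_\delta}W_k e^{\xi_k}
+\delta\int_{\partial B_\delta}\Bigl(\tfrac12\lvert\nabla\xi_k\rvert^2-\lvert\partial_\nu\xi_k\rvert^2\Bigr).
\end{equation*}
Now, on $B_r\setminus\{0\}$ one has $x\cdot\nabla W_k = \bigl(\sum_j 2\alpha_j\,\tfrac{x\cdot(x-p_{j,k})}{\lvert x-p_{j,k}\rvert^2}\bigr)W_k + \lvert x\rvert^{2\alpha}\,(x\cdot\nabla h_k)$. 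Since $p_{j,k}\to 0$, on a shrinking-with-$k$ scale the factor $\tfrac{x\cdot(x-p_{j,k})}{\lvert x-p_{j,k}\rvert^2}\to 1$; the plan is to show that the contribution of the cross terms $\int_{B_\delta}\bigl(x\cdot\nabla W_k\bigr)e^{\xi_k}$ converges, as $k\to\infty$ then $\delta\to 0$, to exactly $2\alpha\cdot(2\pi m)$ — i.e. behaves as though $W_k$ were the radial weight $\lvert x\rvert^{2\alpha}h_k$. The point is that $W_k e^{\xi_k}$ concentrates at $0$ as a measure of total mass $2\pi m$ (by alternative (iii) of Proposition \ref{prop_0.3}), and away from $0$ the integrand is controlled; one needs that no mass escapes to the scale of the $p_{j,k}$ with a ``wrong'' angular weight. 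This is where the integrability hypothesis $\int 1/W_k^{\varepsilon_0}\le C$ from \eqref{1.1} enters, preventing pathological concentration at the individual poles.

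Next I would evaluate the boundary terms using \eqref{2.10}: as $k\to\infty$, $\nabla\xi_k\to -m\,x/\lvert x\rvert^2+\nabla\phi$ in $C^1_{loc}(B_r\setminus\{0\})$, so on $\partial B_\delta$,
\begin{equation*}
\delta\int_{\partial B_\delta}\Bigl(\tfrac12\lvert\nabla\xi_k\rvert^2-\lvert\partial_\nu\xi_k\rvert^2\Bigr)
\longrightarrow
-\pi m^2 + o_\delta(1),
\end{equation*}
while $\delta\int_{\partial B_\delta}W_k e^{\xi_k}\to 0$ since $W_k e^{\xi_k}$ stays bounded in $L^1$ and the blow-up mass is finite. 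Passing to the limit $k\to\infty$ and then $\delta\to 0^+$ in the Pohozaev identity, the left-hand side tends to $(2\alpha+2)\cdot 2\pi m$ (using the claim about the cross terms and that $\int_{B_\delta}W_k e^{\xi_k}\to 2\pi m$), and the right-hand side tends to $-\pi m^2$. Wait — the signs must be arranged so that one obtains the relation $2\pi m(2+2\alpha)=\pi m^2$, hence $m=2(2+2\alpha)=4(1+\alpha)$; but this would give a \emph{single} value, not ``$m\in 4\mathbb N$''. The resolution, and the genuinely hard part, is that \eqref{2.10} with a \emph{single} simple singularity is only the leading-order picture: when several $p_{j,k}$ separate at different rates there can be a nested bubbling tree, and the correct statement is that the Pohozaev identity applies on each intermediate annular region, yielding at each stage an identity of the form (mass)$^2 \sim 4(1+\text{local }\alpha)\cdot$(mass) with the \emph{local} sum of multiplicities, and the total mass $m$ is the sum of contributions each lying in $4(1+\mathbb N_0)\subset 4\mathbb N$. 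So the plan's final step is an induction on $s$ (the number of collapsing zeroes): one isolates the ``fastest'' cluster, rescales, applies the induction hypothesis / Theorem \ref{theorem_2_new} to the rescaled problem whose weight has a strictly smaller number of distinct zeroes or a genuine radial leading term, and adds up. The main obstacle is precisely controlling this rescaling hierarchy — choosing the right sequence of scales $\lambda_k$ so that at each level either the poles decouple into $\ge 2$ well-separated sub-clusters (apply induction to each) or they all collapse at comparable rate into an effective radial weight $\lvert x\rvert^{2\alpha'}$ (apply Theorem \ref{theorem_2_new}) — and showing the masses are additive with no loss, i.e. $m=\sum m_i$ with each $m_i\in 4\mathbb N$, which forces $m\in 4\mathbb N$.
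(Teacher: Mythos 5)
Your sketch contains two of the paper's real ingredients (a Pohozaev identity connecting scales, and an induction on the number of collapsing zeroes), but the step on which your conclusion rests --- ``the masses are additive with no loss, i.e.\ $m=\sum m_i$ with each $m_i\in 4\mathbb N$'' --- is not true and is precisely what the quadratic Pohozaev relation is there to replace. After rescaling by $\tau_k=\max_j|p_{j,k}|$ one obtains a sequence $\varphi_k$ on expanding balls, and the mass $\mu$ it captures (in the limit $R\to\infty$) need not equal $m$: the identity proved in the Appendix is $m^2-\mu^2=4(1+\alpha)(m-\mu)$, whose second branch $m=4(1+\alpha)-\mu$ corresponds to a nonzero amount of mass residing in the neck annulus $R\tau_k\le|x|\le\delta$. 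The extreme case is alternative (i) of the rescaled analysis, where $\varphi_k\to-\infty$ uniformly on compact sets, $\mu=0$, and \emph{all} of the mass $m=4(1+\alpha)$ sits in the neck; an additive decomposition over clusters would then wrongly give $m=0$. So the conclusion $m\in4\mathbb N$ is not obtained by summing cluster contributions but by showing $\mu\in4\mathbb N\cup\{0\}$ and then reading $m$ off the two branches $m=\mu$ or $m=4(1+\alpha)-\mu$ of the quadratic relation.

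The second gap is that even when the rescaled sequence does not collapse to $-\infty$, the limit measure generally has a nontrivial absolutely continuous part $W_1e^{\varphi_0}$ (alternatives (ii) and b) of (iii) in the paper's case analysis: ``blow-up without concentration''). Its total integral is not quantized by any Pohozaev identity; the paper quantizes it by applying the classification result for entire solutions of $\Delta u+e^u=4\pi\sum_j\beta_j\delta_{q_j}$ with $\beta_j\in\mathbb N\cup\{0\}$ (Lemma~2.1 of Lin--Wei--Yang--Zhang, quoted as Lemma~\ref{lem_2.3}), applied to $u=\varphi_0+\sum_j2\alpha_j\ln|x-q_j|$, after checking via \eqref{alpha_q} that the effective coefficients at the blow-up points of $\varphi_k$ remain nonnegative integers. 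This external input is indispensable and absent from your plan. Your induction on $s$ is in the right spirit --- the paper implements it by translating so that $p_{1,k}=-p_{2,k}$, which forces any collapsing cluster at the rescaled level to contain at most $s-1$ of the points --- but by itself it only controls the atomic part $\sum_q\sigma(q)$ of $\mu$, not the diffuse part and not the neck, so the argument as proposed does not close.
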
	
\begin{proof}
We set 
$ 
\alpha=\sum_{j=1}^{s}\alpha_{j}\in \N
$ 
and note that $\alpha\geq 2$. Letting
\begin{equation}\label{2.11b}
\tau_{k}=\max_{j=1,\ldots,s}\vert p_{j,k} \vert 
\longrightarrow 0,
\; \text{ as } \; 
k\longrightarrow +\infty,
\end{equation}
we define 
\begin{equation*}
q_{j,k}=\frac{p_{j,k}}{\tau_{k}}
,\;
j=1,\ldots,s,
\end{equation*}
and since $\vert q_{j,k} \vert \leq 1$, along a subsequence, we can assume:
\begin{equation*}
q_{j,k}\longrightarrow q_{j}
\; \text{ as } \; 
k\longrightarrow +\infty
,\;
j=1,\ldots,s.
\end{equation*}
Therefore, (recalling the normalization $h(0)=1$)
\begin{equation}\label{2.15}
0\leq W_{1,k}(x)
:=
(\Pi_{j=1}^{s}\vert x-q_{j,k} \vert^{2 \alpha_{j}} )h_{k}(\tau_{k}x)
\longrightarrow 
\Pi_{j=1}^{s}\vert x-q_{j} \vert^{2\alpha_{j}}
=
W_{1}(x),
\end{equation}
as $k\longrightarrow +\infty$,
uniformly on compact sets of $\R^{2}$.
We should keep in mind that $W_{1}(x)$ vanishes exactly at the set
\begin{equation}\label{definition_Z_1}
\begin{split}
Z_{1}:= \{ q_{1},\ldots,q_{s} \},
\end{split}
\end{equation}
and the $q_{j}$'s may not be distinct. 
We define
\begin{equation}\label{2.12}
\varphi_{k}(x)
=
\xi_{k}(\tau_{k}x)
+
2(\alpha+1)\ln \tau_{k}
\; \text{ in } \; 
D_{k}=\{ \vert x \vert \leq \frac{r}{\tau_{k}} \} 
\end{equation}
satisfying:
\begin{equation*}
-\Delta \varphi_{k}
=
W_{1,k}(x) e^{\varphi_{k}}
\; \text{ in } \; 
D_{k} 
\end{equation*}
\begin{equation*}
\int_{D_{k}} W_{1,k}e^{\varphi_{k}} \leq C.
\end{equation*}
By scaling \eqref{2.9}, for any given
$x_{1},x_{2} \in \R^{2}$ and $k$ large, we have:
\begin{equation}\label{2.17}
\varphi_{k}(x_{1})-\varphi_{k}(x_{2})
=
\frac{1}{2\pi}
\int_{D_{k}} 
\ln(\frac{\vert x_{2}-y \vert }{\vert x_{1}-y \vert })
W_{1,k}e^{\varphi_{k}}
+
T_{k}(x_{1},x_{2})		
\end{equation}
with $T_{k}$ uniformly bounded in $C^{2}_{loc}$. 
Therefore (as in Lemma 2.2 of \cite{Bartolucci_Chen_Lin_Tarantello}) we find
$R_{0}> 1$ sufficiently large, such that 
$\; \forall \; R\geq R_{0}$ there holds,
\begin{equation*}
\max_{\partial B_{R}}\varphi_{k}
-
\min_{\partial B_{R}}\varphi_{k}
\leq C_{R},
\end{equation*}
with a suitable constant $C_{R}$, depending on $R$ only.

Since $W_{1,k}$ in \eqref{2.15} satisfies \eqref{1.1} and \eqref{1.12}  
in any open and bounded set of $\R^{2}$ 
(for large $k$), 
we can therefore apply Proposition \ref{prop_0.3} to $\varphi_{k}$ 
in any ball $B_{R}$ with $R\geq R_{0}$. 

Furthermore, by a diagonalization process, along a subsequence, we can set 
\begin{equation}\label{2.19}
\mu=\lim_{R \to +\infty } \underset{k \to +\infty }{\underline{\lim} }
\frac{1}{2\pi}\int_{B_{R}} W_{1,k}e^{\varphi_{k}}\leq m,
\end{equation}
and we can describe the asymptotic behavior of $\varphi_{k}$, 
as $k\longrightarrow +\infty$, by one of the following alternatives:
\begin{equation}\label{uniform_on_compact_minus_infinity}
\text{(i)}\quad 
\varphi_{k} \longrightarrow  -\infty
\; \text{ uniformly on compact sets of } \; 
\R^{2}
\; \text{ and } \; 
\mu=0,
\end{equation}
\begin{equation}\label{uniform_on_compact}
\hspace{-60pt}
\text{(ii)} \quad 
\varphi_{k} \longrightarrow  \varphi_{0}
\; \text{ in } \; 
C^{2}_{loc}(\R^{2})
,\;
-\Delta \varphi_{0}=
W_{1}(x)e^{\varphi_{0}}
\; \text{ in } \; \R^{2},
\end{equation}
\begin{equation}\label{2.22}
\hspace{-64pt}
\mu=\frac{1}{2\pi}\int_{\R^{2}}W_{1}(x)e^{\varphi_{0}}
\;\text{($W_{1}$ is given in \eqref{2.15})} \;  
\end{equation}

\

\;(iii) \,
There exists a finite blow-up set
\begin{equation*}
S_{\varphi}
:=
\{ 
q
\; : \; 
\; \exists \; z_{k}\longrightarrow q
\; \text{ and } \; 
\varphi_{k}(z_{k})\longrightarrow +\infty
\} 
\end{equation*}
\quad \quad \quad \;\;\,\, 
such that
\begin{enumerate}[label=(\roman*)]
\item[a)] \underline{either}\;
$\varphi_{k} \longrightarrow  -\infty$
uniformly  on compact sets of 
$\R^{2}\setminus S_{\varphi}$,
\begin{align}
\label{uniform_on_compact_without}
&
W_{1,k}e^{\varphi_{k}}
\longrightarrow 
\sum_{q\in S_{\varphi}}\sigma(q)\delta_{q}
\; \text{ weakly in the sense of measures, } 
\\
&
\label{2.25}
\mu=\frac{1}{2\pi}\sum_{q\in S}\sigma(q)
\; \text{ with } \; 
\sigma(q)\geq 4\pi.
\end{align}	
In addition we know that,
\begin{equation}\label{2.24a}
\begin{split}
& 
\sigma(q)=8\pi
\; \text{ if } \; 
q\neq q_{j}, \; \forall \; j=1,\ldots, s,
\\
&
\sigma(q)=8\pi(1+\alpha_{j})
\; \text{ if } \; 
q=q_{j} \; \text{ and } \; q\neq q_{k}
\; \text{ for } \; 
k\neq j;
\end{split}
\end{equation}
\end{enumerate}

\begin{enumerate}[label=(\roman*)]
\item[b)] 	\underline{or}\;
$\varphi_{k}\longrightarrow \varphi_{0}$ 
in 
$C^{2}_{loc}(\R^{2}\setminus S_{\varphi})$,
\begin{align}
&
\label{2.26}
W_{1,k}e^{\varphi_{k}}
\rightharpoonup 
\sum_{q\in S} \sigma(q)\delta_{q}
+ 
W_{1}
e^{\varphi_{0}}
\; \text{ weakly in the sense of measures,} 
\\
&  
\label{2.27} 
-\Delta \varphi_{0}
=
W_{1}(x)e^{\varphi_{0}}
+
\sum_{q\in S_{\varphi}}\sigma(q)\delta_{q}
\; \text{ in } \; 
\R^{2},\\
& 
\label{2.28}
\mu
=
\frac{1}{2\pi}
(\int_{\R^{2}} W_{1}(x)e^{\varphi_{0}}
+
\sum_{q\in S_{\varphi}}\sigma(q)
).
\end{align}	
Furthermore, when alternative b) holds in (ii)
then necessarily: 
$S_{\varphi}\subset Z_{1}$
(see \eqref{definition_Z_1}), and
blow-up takes place at points in $Z_{1}$ where 
different zero points of $W_{1,k}$ collapse together.
Namely, for $q\in \mathcal{S}_{\varphi}$ there holds:
\begin{equation}\label{2.29a}
\begin{split}
&
q=q_{j_{1}}=q_{j_{2}}=\ldots=q_{j_{m}} \in Z_{1}
\\
& 
\text{with } \; 
m\geq 2,
\; \text{ and } \; 
1\leq j_{1}<\ldots<j_{m} \leq s.
\end{split}
\end{equation}
Moreover, in this case $\sigma(q)$ must satisfy:
\begin{equation}\label{alpha_q}
4\pi 
\leq 
\sigma(q)
<
4\pi(\alpha(q)+1)
\; \text{ for } \;
\alpha(q):=\sum_{k=1}^{m}\alpha(q_{j_{k}}).
\end{equation}
\end{enumerate}
 
Indeed, by \eqref{2.27}, 
we have that 
$\varphi_{0}$ admits a logarithmic singularity at $q$ of order 
$\frac{\sigma(q)}{2\pi}$.  
At the same time we must ensure the integrability of 
$\vert x-q\vert^{2\alpha(q)}e^{\varphi_{0}}$ around $q$
(recall \eqref{2.15} and see \eqref{2.28}),
and this is possible only if
the inequality given in \eqref{alpha_q} holds. 

When either alternative (ii) or b) in (iii) hold, 
then we can argue similarly around infinity.
Therefore, by \eqref{uniform_on_compact}  and \eqref{2.22}, or by 
\eqref{2.27} and \eqref{2.28} respectively, we find that, 
\begin{equation}\label{2.30a}
\varphi_{0}(x)
=
-\mu\ln(\vert x \vert )+O(1),
\; \text{ for } \;  
\vert x \vert \geq 1;
\end{equation}
(see e.g. \cite{Chen_Lin_1}),
and again, by the given integrability condition, we derive that 
\begin{equation}\label{2.30}
\mu=2(\alpha+1+\sigma)
\; \text{ for some } \; 
\sigma>0.
\end{equation}
With the help of the information above  we can establish the following: 
\begin{equation}\label{lem_2.2}
\text{ \underline{Claim 1}: If  } \; \varphi_{k}
\; \text{ satisfies alternative (iii), then } \; 
\sigma(q)\in 8\pi \N,
\; \forall \; q\in S_{\varphi}
\end{equation}

Since $\alpha_{j}\in \N, \; \forall \; j=1,\ldots,s$,
by virtue of \eqref{2.24a}
to establish the claim we only have to consider the case where
$q\in S_{\varphi}$ satisfies \eqref{2.29a}.

To this purpose, 
we go back to the original sequence $\xi_{k}$. 
Actually we replace
$\xi_{k}(x)$ with
$
\xi_{k}(x+\frac{p_{1,k}+p_{2,k}}{2})
$ 
and 
$
\frac{p_{1,k}+p_{2,k}}{2}\longrightarrow 0
$, 
so we can assume, without loss of generality, that,
\begin{equation}\label{2.31}
p_{1,k}=-p_{2,k},\; \forall \;  k\in \N.
\end{equation} 

We proceed by induction on $s$, 
the number of different zeroes of $W_{k}$ collapsing at the origin, 
the only blow-up point of $\xi_{k}$
(see \eqref{2.0},\eqref{2.2},\eqref{2.3},\eqref{2.7}).

\

In case $s=2$, then as shown in \cite{Lee_Lin_Tarantello_Yang}, 
we easily reach the desired conclusion. 
Indeed,  by \eqref{2.11b}  and \eqref{2.31}  we have:
$
\tau_{k}=\vert p_{1,k} \vert =\vert p_{2,k} \vert, 
$
so by letting: 
$q_{k}=\frac{p_{1,k}}{\tau_{k}}$
we have
$\vert q_{k} \vert =1$ and
$\frac{p_{2,k}}{\tau_{k}}=-q_{k}$. 
Thus,  (along a subsequence) we have:
\begin{equation*}
-\Delta \varphi_{k}
=
\vert x-q_{k} \vert^{2\alpha_{1}}\vert x+q_{k} \vert^{2\alpha_{2}}h_{k}(\tau_{k}x)e^{\varphi_{k}}
\; \text{ in } \; 
D_{k} 
\end{equation*}
\begin{equation*}
q_{k}\longrightarrow q_{0}\neq 0,
\; \text{ with } \; 
q_{1}=q_{0} 
\; \text{ and } \; 
q_{2}=-q_{0}.
\end{equation*}
So, for the scaled sequence  $\varphi_{k}$ we do not have to face 
the possibility of a further "collapsing" of zeroes, 
and the desired conclusion follows simply by \eqref{2.24a}. 

\

Next, let $s\geq 3$, and assume that $q\in S_{\varphi}$ satisfies	
\begin{equation*}
q=q_{j_{1}}=q_{j_{2}}=\ldots=q_{j_{m}}
,\;
j_{l}\in \{ 1,\ldots,s \} 
\; \text{ and  } \; m\geq 2.
\end{equation*}
It suffices to show that $m\leq s-1$, 
since then by the induction assumption we could conclude that
$\sigma(q)\in 8\pi \N$, as desired. As above, for suitable $q_{0}\in \R^{2}$, we have:
$
q_{1}=q_{0}
\; \text{ and } \; 
q_{2}=-q_{0}.
$
In case $q_{0}\neq 0$, we see that,
if $q\neq \pm q_{0}$, then
$q\in \{ q_{3},\ldots,q_{s} \}$ 
and necessarily 
$m\leq s-2$. 
While if
$q=q_{0}$ or $q=-q_{0}$,
then
$q\neq q_{2}$ or  $q\neq q_{1}$  respectively, and again 
$m \leq s-1$.
Next we suppose that $q_{0}=0$. In case $q\neq 0$, then we conclude as above that 
$m\leq s-2$. Hence suppose that $q=q_{0}=0$, and let
$j_{0}\in \{ 1,\ldots,s \}$ so that 
(along a subsequence):
$\tau_{k}=\vert p_{j_{0},k} \vert$.
Consequently 
$\vert q_{j_{0}} \vert =1$ and so $q\neq q_{j_{0}}$ and we can still conclude 
that $m\leq s-1$. 
So, if $q$ coincides with $m$-collapsing zeros in $Z_{1}$, then
we have shown that, $2\leq m \leq s-1$, and
as already mentioned, 
the desired conclusion follows by the induction hypothesis.

To proceed further, we recall the following:
\begin{lemx}[Lemma 2.1 of \cite{Lin_Wei_Yang_Zhang}]\label{lem_2.3}
Let $u$ satisfy
\begin{DispWithArrows}<>
& \Delta u + e^{u} =4\pi \sum_{j=1}^{N}\beta_{j}\delta_{q_{j}}   
\; \text{ in } \; \R^{2} \notag \\
& \int_{\R^{2}}e^{u}< +\infty   \notag
\end{DispWithArrows}
with $\beta_{j}\in \N \cup \{ 0 \} $ and $q_{j}\in \R^{2},\; j=1,\ldots,N$. Then
\begin{equation*}
\int_{\R^{2}} e^{u} 
=
4\pi(\sum_{j=1}^{N}\beta_{j}+1+\sigma)\in 8\pi \N
\; \text{ with } \; 
\sigma> 1.
\end{equation*}
\end{lemx}	 

By combining Claim 1 (see \eqref{lem_2.2}) and Lemma \ref{lem_2.3} we obtain:
\begin{equation}\label{lem_2.4}
\text{\underline{Claim 2:}} \quad 
\mu \in 4\N \cup \{ 0 \}.  
\quad\quad\quad\quad\quad\quad\quad\quad\quad\quad
\quad\quad\quad\quad\quad\quad\quad\quad\quad\quad
\end{equation}

To establish \eqref{lem_2.4}, 
notice that in case 
$\varphi_{k}$ satisfies (i) then $\mu=0$. While if $\varphi_{k}$ satisfies (ii), then we can apply Lemma \ref{lem_2.3} to the function,
\begin{equation}\label{2.32}
u:=\varphi_{0}+\sum_{j=1}^{s}2\alpha_{j}\ln\vert x-q_{j} \vert
,\;
\alpha_{j}\in \N
\end{equation}
to conclude, that
\begin{equation*}
\mu=\frac{1}{2\pi}\int_{\R^{2}}\Pi_{j=1}^{s} \vert x-q_{j} \vert^{2\alpha_{j}}e^{\varphi_{0}}
=
\frac{1}{2\pi}\int_{\R^{2}}e^{u}\in 4\N.  
\end{equation*}

On the other hand, if $\varphi_{k}$ satisfies alternative a) of (iii), 
then by \eqref{2.25} and  \eqref{lem_2.2}, still
we find: $\mu \in 4\N$.

Finally, if alternative b) in  (iii) holds, 
then with the notation in \eqref{2.29a},\eqref{alpha_q}
we see that    
$u$ in \eqref{2.32} satisfies:
\begin{equation*}
\Delta u +e^{u}
=
4\pi \sum_{q_{j}\in Z_{1} \setminus S_{\varphi} }\alpha_{j}\delta_{q_{j}}
+
4\pi \sum_{q\in S_{\varphi}}(\alpha(q)-\frac{\sigma(q)}{4\pi})\delta_{q}	
\end{equation*}
and in view of \eqref{alpha_q}, we  can check that Lemma \ref{lem_2.3} applies to $u$. 
As a consequence
\begin{equation*}
\int_{\R^{2}} W_{1}(x)e^{\varphi_{0 }}
=
\int_{\R^{2}} \Pi_{j=1}^{s}\vert x-q_{j} \vert^{2 \alpha_{j}}e^{\varphi_{0 }}
=
\int_{\R^{2}} e^{u}
\in
8\pi \N. 
\end{equation*}
Therefore, by  \eqref{2.28} and \eqref{lem_2.2}, we derive that
$\mu \in 4\N$ as desired.

Notice in particular that, when $\varphi_{k}$ satisfies either 
(ii) or alternative b) of (iii), 
then $\mu$ must satisfy \eqref{2.30} with $\sigma \in \N$. 

Finally to conclude the proof of Theorem \ref{thm_2.1}, 
we recall the following relation between $m$ and $\mu$ established 
e.g. in \cite{Lee_Lin_Wei_Yang}:
\begin{equation}\label{2.33}
m^{2}-\mu^{2} 
= 
4(1+\alpha)(m-\mu).
\end{equation}
For completeness we have included a proof of \eqref{2.33} in the Appendix. 

At this point we see that, if $\varphi_{k}$ satisfies (i) then $\mu=0$ and from 
\eqref{2.33}  we find $m=4(\alpha+1)\in 4\N$, as 
$\alpha=\sum_{j=1}^{s}\alpha_{j}\in \N$. 
In case $\varphi_{k}$ satisfies (ii) then by \eqref{2.30} and \eqref{lem_2.4}, 
$\mu\in 4\N$ and 
$\mu>2(\alpha+1)$. 
Therefore, from \eqref{2.33} we must have that necessarily $m=\mu\in 4\N $.
Finally, if $\varphi_{k}$ satisfies alternative (iii), then again from \eqref{2.33} we have that, either $m=\mu \in 4\N$ or $m=4(\alpha+1)-\mu$, with $\alpha\in \N$ and $\mu\in 4\N$. 
Thus, in any case,
$m\in 4\N$, and the proof is completed.
\end{proof}


The "local" results above can be used to describe the asymptotic behaviour of solutions for Liouville-type equations on a compact Riemann surface
$(X,g)$. Denote by $d_{g}(\cdot,\cdot)$ the distance in $(X,g)$.
We consider the sequence 
$v_{k}\in C^{2,\alpha}(X)$ to satisfy:
\begin{equation}\label{global_problem}
-\Delta v_{k} 
=
R_{k}e^{v_{k}}-f_{k}
\; \text{ in  } \; 
X,
\end{equation}
\begin{equation}\label{n.3.2}
R_{k}(z)
=
\left(
\Pi_{j=1}^{N}(d_{g}(z,z_{j,k}))^{2 \alpha_{j}}
\right)
g_{k}(z),\;
z\in X;
\end{equation}
with
\begin{align}
&
g_{k}\in C^{1}(X)
\; : \; 
a\leq g_{k}\leq b,\;\;
\vert \nabla g_{k} \vert \leq A
\; \text{ and } \; 
g_{k} \longrightarrow g_{0} 
\; \text{ in  } \; 
C^{0}(X);
\\
&
z_{j,k}\in X
\; : \;  
z_{j,k}\neq z_{l,k}
,\;
j\neq l 
\; \text{ and } \; 
z_{j,k}\longrightarrow z_{j}
,\;
j=1,\ldots,N;
\label{n.3.7}
\\
&
f_{k}\in C^{0,\alpha}(X),\;f_{k}\longrightarrow f_{0}
\; \text{ in } \;
L^{p}(X)
\; \text{ for some } \; 
p>1,\;
\int_{X} f_{0}\,dA \neq 0;
\end{align}
and so,
$$
R_{k}(z)
\longrightarrow 
R_{0}(z)
:=
\left(
\Pi_{j=1}^{N}(d_{g}(z,z_{j}))^{2 \alpha_{j}}
\right)
g_{0}(z)
\; \text{ in } \; 
C^{0}(X).
$$
As before we assume that,
\begin{equation}\label{alpha_is_natural}
\alpha_{j}\in \N
,\;
j=1,\ldots,N.
\end{equation}
We denote by
$
Z
=
\{ z \in X\; : \;  R_{0}(z)=0 \}
=
\{ z_{1},\ldots,z_{N} \}
$
the zero set of $R_{0}$, formed by the points 
point $z_{j}$ given in \eqref{n.3.7}, for $j=1,\ldots,N$. 
Again we must keep in mind 
that such points may \underline{not} be distinct,
since,  at the limit, different zeroes of $R_{k}$ could coalesce 
at the same zero of $R_{0}$. 
Therefore, we let $Z_{0}\subset Z$ be the subset 
(possibly empty)  of $Z$, given by 
such "collapsing" zeroes, namely:
\begin{equation*}
\begin{split}
Z_{0} 
=
\{ 
z \in Z
\; : \;
&
 \exists \; s\geq 2,\;
1\leq j_{1}<\ldots<j_{s}\leq N
\; \text{ such that} \; \\ 
& 
z=z_{j_{1}}=\ldots=z_{j_{s}}
\; \text{ and } \;
z \not \in Z \setminus \{  z_{j_{1}},\ldots,z_{j_{s}} \}  
\} . 
\end{split}
\end{equation*}
By combining the "local" results obtained above, we can establish the following:
 
\begin{thm}\label{thm_blow_up_global}
Let $v_{k}$ satisfy \eqref{global_problem} and  
assume \eqref{n.3.2}-\eqref{alpha_is_natural}. 
Then, along a subsequence, one of the following alternatives holds:

\

\noindent
(i) \quad (compactness)\; : \; 
$v_{k}\longrightarrow v_{0}$ in $C^{2}(X)$ with
\begin{equation}\label{n.3.12}
-\Delta v_{0}
=
R_{0}e^{v_{0}}-f_{0}
,\;
\; \text{ in   } \; 
X
\end{equation}

\noindent
(ii)  \quad (blow-up)\; : \;There exists a \underline{finite} blow-up set 
$$
\mathcal{S}
=
\{ q\in X
\; : \; 
\; \exists \; 
q_{k}\longrightarrow q
\; \text{ and } \; 
v_{k}(q_{k})
\longrightarrow 
+ \infty,
\; \text{ as } \; 
k\longrightarrow +\infty
 \} 
$$
such that, 
$v_{k}^{+}$ 
is uniformly bounded in
$C^{0}_{loc}(X\setminus \mathcal{S})$
and,
as $k\longrightarrow +\infty$, 
\begin{enumerate}[label=(\roman*)]
\item[(a)] either (blow-up with concentration)\;:\;
\begin{align}
&
v_{k}\longrightarrow -\infty
\; \text{ uniformly on compact sets of  } \;
X\setminus \mathcal{S},
\notag
\\
&
R_{k}e^{v_{k}}
\rightharpoonup
\sum_{q\in \mathcal{S}}\sigma(q)\delta_{q}
\; \text{ weakly in the sense of measures,} \; 
\sigma(q)\in 8\pi \N.
\notag
\end{align}	
In particular,
$\int_{X} f_{0}\,dA\in 8\pi \N$ in this case. 

\item[(b)] or (blow-up without concentration)\;:\; 
\begin{align}
&
v_{k}\longrightarrow v_{0}
\; \text{ in } \; C^{2}_{loc}(X\setminus \mathcal{S}),
\notag
\\
& 
R_{k}e^{v_{k}}
\rightharpoonup
R_{0}e^{v_{0}}
+
\sum_{q\in \mathcal{S}}\sigma(q)\delta_{q}
\; \text{ weakly in the sense of measures, }\;
\notag
\\
&
-\Delta v_{0}
=
R_{0}e^{v_{0}}
+
\sum_{q\in \mathcal{S}}\sigma(q)\delta_{q}-f_{0}
\; \text{ in  } \; 
X, \;
\sigma(q)\in 8\pi \N.
\notag
\end{align}	
\end{enumerate}

Furthermore, if alternative b) of (ii) holds then
$
\mathcal{S} \subset Z_{0}
$
and so, in this case,  blow-up occurs only at points 
where different zeroes of $R_{k}$ coalesce at the limit. 
\end{thm}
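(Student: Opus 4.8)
\textbf{Proof plan for Theorem \ref{thm_blow_up_global}.}

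The plan is to reduce the global problem on $(X,g)$ to the local analysis already developed, the main tools being Proposition \ref{prop_0.3}, Theorem \ref{theorem_2_new} and Theorem \ref{thm_1}, together with a Green's representation argument to propagate the bounded-oscillation hypothesis from a global statement down to each small coordinate ball. First I would fix a conformal coordinate chart around each point and rewrite \eqref{global_problem} locally. Writing $-\Delta_g v_k = R_k e^{v_k} - f_k$ in an isothermal chart turns $-\Delta_g$ into $-\rho^{-1}\Delta$ (Euclidean Laplacian times a positive conformal factor $\rho$), and $d_g(z,z_{j,k})$ is comparable to the Euclidean distance $|x - p_{j,k}|$ with a smooth nonvanishing ratio; absorbing $\rho$, the conformal factor and the smooth ratios into the weight, one obtains locally a sequence $\eta_k := v_k + (\text{smooth bounded correction})$ solving $-\Delta \eta_k = W_k e^{\eta_k}$ with $W_k(x) = (\Pi_j |x-p_{j,k}|^{2\alpha_j}) h_k(x)$, $h_k$ bounded above and below away from zero with $|\nabla h_k|$ bounded (here one uses $f_k \to f_0$ in $L^p$, $p>1$, to handle the $f_k$ term, either by folding $e^{-G_k}$ with $G_k$ a potential of $f_k$ into $\eta_k$, so that $\eta_k$ picks up only a $C^{1,\alpha}$-bounded perturbation, or by noting $f_k$ contributes an $L^p$-bounded right-hand side that does not affect any of the blow-up alternatives). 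The volume bound $\int_X R_k e^{v_k}\,dA \le C$ needed for \eqref{1.c} follows by integrating \eqref{global_problem} over $X$ and using $\int_X f_k\,dA \to \int_X f_0\,dA$.

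Next I would establish the global dichotomy. Since $W_k$ satisfies \eqref{1.1} (the lower-bound integrability $\int 1/W_k^{\varepsilon_0} \le C$ holds because the $p_{j,k}$ are distinct with $\alpha_j\in\N$ and, near any fixed point, only finitely many factors can vanish, so $W_k^{-\varepsilon_0}$ is locally integrable uniformly in $k$ for $\varepsilon_0$ small), the Brezis--Merle alternative (Proposition \ref{prop_A}, Proposition \ref{prop_B}) gives a finite blow-up set $\mathcal{S}$ and uniform $C^0_{loc}(X\setminus\mathcal{S})$-bounds on $v_k^+$ along a subsequence. On $X$ with the global equation, the Green representation formula on $(X,g)$ yields $v_k(x) = \bar v_k + \int_X G_g(x,y)(R_k e^{v_k} - f_k)(y)\,dA(y)$, with $\bar v_k$ the average; passing to a subsequence one controls $\bar v_k$ exactly as in the proof of Proposition \ref{prop_0.3} (the harmonic-average estimate via $\int 1/W_k^{\varepsilon_0}\le C$), producing the split between $v_k \to v_0$ in $C^2(X)$ (alternative (i)), and blow-up with $v_k \to -\infty$ on $X\setminus\mathcal{S}$ (case (a)) or $v_k \to v_0$ in $C^2_{loc}(X\setminus\mathcal{S})$ (case (b)). In both blow-up cases $R_k e^{v_k} \rightharpoonup \sum_{q\in\mathcal{S}}\sigma(q)\delta_q + R_0 e^{v_0}$ weakly as measures (with $R_0 e^{v_0}\equiv 0$ in case (a) since $v_k\to-\infty$), and passing to the limit in the Green representation gives the stated equation for $v_0$. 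Integrating that limiting equation over $X$, using $\int_X \Delta v_0\,dA = 0$, gives in case (a) the identity $\sum_{q\in\mathcal S}\sigma(q) = \int_X f_0\,dA$, hence $\int_X f_0\,dA \in 8\pi\N$ once quantization is known.

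The quantization $\sigma(q)\in 8\pi\N$ for each $q\in\mathcal{S}$ is where the local theorems enter. The crucial input is that $\eta_k$ satisfies the bounded oscillation property \eqref{2.1} on small balls around $q$: this I would derive from the Green representation on $X$ exactly as indicated in the paragraph preceding \eqref{2.0}, writing $v_k$ minus its minimum on a small geodesic circle as a Green potential plus a $C^2$-bounded term, which is uniformly bounded on that circle by the $C^0_{loc}(X\setminus\mathcal S)$-bound on $v_k^+$ and the potential estimates. With \eqref{2.1}--\eqref{2.7} verified, I split by cases on which zeroes of $R_k$ accumulate at $q$: if $q\notin Z$ (so $R_0(q)\neq 0$) then $q$ cannot be a blow-up point in case (b) and in case (a) Theorem \ref{theorem_2_new}(i) gives $\sigma(q)=8\pi$; if $q=z_j$ is a zero of $R_0$ approached by a single $z_{j,k}$, Theorem \ref{theorem_2_new}(ii) gives $\sigma(q)=8\pi(1+\alpha_j)\in 8\pi\N$; and if $q$ is a collapsing zero, i.e.\ $q=z_{j_1}=\dots=z_{j_s}$ with $s\ge 2$, then exactly the local model \eqref{2.0}--\eqref{2.7} holds and Theorem \ref{thm_1} gives $\sigma(q)\in 8\pi\N$. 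The same case analysis yields the last assertion: if alternative (b) occurs, the limit $v_0$ has a logarithmic singularity of strength $\sigma(q)/2\pi$ at each $q\in\mathcal S$ and simultaneously $R_0 e^{v_0}$ must be locally integrable near $q$; near a non-collapsing point $z_j$ the local factor of $R_0$ vanishes to the finite order $2\alpha_j$ with $\alpha_j\in\N$, and Theorem \ref{theorem_2_new} says the only possibility there is blow-up \emph{with} concentration, so such a point cannot belong to $\mathcal S$ in case (b); hence $\mathcal S\subset Z_0$. I expect the main obstacle to be the bookkeeping in the reduction step — keeping track of the conformal factor, the comparison of $d_g$ with Euclidean distance, and especially the treatment of the $f_k$-term so that the perturbation added to $v_k$ is genuinely $C^{1,\alpha}$-bounded (or at least does not disturb any alternative), since $f_k$ converges only in $L^p$; everything afterwards is an application of the already-established local results.
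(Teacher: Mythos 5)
Your plan follows essentially the same route as the paper: localize in conformal coordinates, absorb $f_{k}$ by subtracting the solution $\theta_{k}$ of a Dirichlet problem so that the resulting $\xi_{k}=v_{k}-\theta_{k}$ fits the local model \eqref{1.a}--\eqref{1.c} with a weight of the form \eqref{1.15}; get the volume bound by integrating the equation over $X$; derive the bounded oscillation property on compact subsets of $X\setminus\mathcal{S}$ from the Green representation on $X$; and then invoke Proposition \ref{prop_0.3}, Theorem \ref{theorem_2_new} and Theorem \ref{thm_1} for the dichotomy, the quantization $\sigma(q)\in 8\pi\N$, and the inclusion $\mathcal{S}\subset Z_{0}$ in case (b). All of that is sound and matches the paper's argument, including your observation that in alternative (b) a blow-up point where $R_{0}$ has a single (or no) zero is excluded by Theorem \ref{theorem_2_new}, which forces concentration there.

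There is, however, one genuine gap: you never use the hypothesis $\int_{X}f_{0}\,dA\neq 0$ where it is actually needed, namely to rule out the alternative ``$\mathcal{S}=\emptyset$ and $v_{k}\longrightarrow -\infty$ uniformly on $X$'' and, in case (i), to pin down the additive constant. If you transplant the dichotomy of Proposition \ref{prop_0.3} to the closed surface verbatim (controlling the average $\bar v_{k}$ by the Jensen/H\"older estimate through $\int 1/W_{k}^{\varepsilon_{0}}$), you only obtain an upper bound $\bar v_{k}\leq C$, and the outcome ``$\bar v_{k}\longrightarrow -\infty$ with empty blow-up set'' survives as a third possibility that is absent from the statement of Theorem \ref{thm_blow_up_global}. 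The paper closes this by integrating \eqref{global_problem}: $\int_{X}R_{k}e^{v_{k}}=\int_{X}f_{k}\longrightarrow\int_{X}f_{0}$, and since $R_{k}e^{v_{k}}\geq 0$ and $\int_{X}f_{0}\neq 0$ one gets $\int_{X}f_{0}>0$, which is incompatible with $v_{k}\longrightarrow-\infty$ everywhere and simultaneously determines the limit of the average, $d_{k}\longrightarrow\log\bigl(\int_{X}f_{0}\,/\int_{X}R_{0}e^{w_{0}}\bigr)$, in the compactness case. You do invoke the integral identity, but only for the volume bound and for $\int_{X}f_{0}\,dA\in 8\pi\N$ in case (a); you should also invoke it at this earlier stage. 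Everything else in your proposal is consistent with the paper's proof.
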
	

\begin{proof}
If $\max_{X}v_{k} \leq C$, 
then the right hand side of \eqref{global_problem} 
is uniformly bounded in $L^{p}(X)$, $p>1$.
Hence, by setting 
$v_{k}=w_{k}+d_{k}$ with $d_{k}=\fint_{X}v_{k}$, 
then by elliptic estimates
(see \cite{Aubin_Book}), we see that $w_{k}$ is uniformly bounded in
$C^{1,\alpha}(X)$, and so along a subsequence, we obtain that, 
$w_{k}\longrightarrow w_{0}$ in $C^{1}(X)$, as 
$k\longrightarrow  +\infty$. 
After integration of \eqref{global_problem}, we have:  
\begin{equation}\label{f_integral_non_zero}
\int_{X} e^{v_{k}}R_{k}
=
\int_{X} f_{k}
\longrightarrow 
\int_{X} f_{0}
,
\; \text{ as } \; 
k\longrightarrow +\infty.
\end{equation} 
Since by assumption:
$\int_{X} f_{0} \neq 0$, then from \eqref{f_integral_non_zero} we see that necessarily, 
$\int_{X} f_{0} > 0$,  and so
$
d_{k} 
\longrightarrow 
\log
(
\frac
{\int_{X} f_{0}}
{\int_{X} R_{0}e^{w_{0}}}
)
=
d_{0}.
$
Consequently, 
$v_{k}\longrightarrow v_{0}=w_{0}+d_{0}$
in
$C^{1}(X)$ 
and $v_{0}$ satisfies \eqref{n.3.12}. Thus alternative (i) holds in this case.

Next assume that, (along a subsequence)
\begin{equation*}
\max_{X}v_{k}\longrightarrow +\infty
,
\; \text{ as } \; 
k\longrightarrow +\infty.
\end{equation*}
This implies that the blow-up set
of (a subsequence of) $v_{k}$ is not empty, that is: 
\begin{equation*}
\mathcal{S}
=
\{ 
q\in X \; \mid \; \; \exists \; q_{k} \in X 
\; : \; q_{k}\longrightarrow q \; \text{ and } \; v_{k}(q_{k})\longrightarrow +\infty
\} 
\neq \emptyset.
\end{equation*}

At this point, around any $q\in X$,
we introduce local conformal coordinates centered at the origin,
and (with abuse of notation) we shall denote in the same way the local expressions of 
$v_{k}$ and $f_{k}$ in such coordinates, as defined in a small ball
$B_{r}$.
In this way, in $B_{r}$ we may consider the sequence $\theta_{k}$ satisfying: 
\begin{equation*}
-\Delta \theta_{k}  = f_{k} 
\; \text{ in  } \; 
B_{r} 
\; \text{ and } \; 
 \theta_{k} = 0
\; \text{ in  } \; 
\partial B_{r}. 
\end{equation*}
Hence, $\theta_{k}$ is uniformly bounded in $C^{1,\alpha}(X)$ and (along a subsequence) 
we may assume that,
$\theta_{k}
\longrightarrow 
\varphi_{0}
\; \text{ in  } \; 
C^{1}(X) 
$, as
$k\longrightarrow +\infty$, 
with
$\Delta \theta_{0} =f_{0}$ in $ B_{r} $ and
$\theta_{0} = 0$ in  $\partial B_{r}$.
Therefore, the new sequence  
$\xi_{k}=v_{k}-\theta_{k}$
satisfies:
\begin{equation*}
 - \Delta \xi_{k} = W_{k}e^{\xi_{k}}
\; \text{  in  } \; B_{r} 
\end{equation*}
with suitable $W_{k} \geq 0$ such that,
in $B_{r}$ the following holds:
$$\vert \nabla W_{k} \vert \leq A 
\; \text{ and } \; 
W_{k}\longrightarrow W_{0} \; \text{ uniformly.}$$ 
From \eqref{f_integral_non_zero} also we have:
$$
\int_{B_{r}} W_{k}e^{\xi_{k}} \leq C,
$$
with suitable constant $C>0$. 
Furthermore, 
\begin{equation*}
\begin{split}
&
\; \text{ if } \; 
W_{0}(0)=0
\; \text{ then } \; 
W_{k}(z)=\Pi_{j=1}^{s}\vert z-p_{j,k} \vert^{2\alpha_{j}}h_{k}(z) 
\; \text{ for } \; 
s\geq 2,
\\
& 
\; \text{ and } \; 
h_{k}
\; \text{ satisfies \eqref{1.14} in } \;
B_{r}, 
\;
\alpha_{j} \in \N \text{ and } p_{j,k}\longrightarrow 0,\text{ as } k\longrightarrow \infty. 
\end{split}	
\end{equation*}	

In any case, $W_{k}$ satisfies \eqref{1.1} in $B_{r}$
(with suitable $\varepsilon_{0} >0$) and therefore, 
by Remark \ref{sigma_bigger_4_pi} and \eqref{f_integral_non_zero} 
we conclude that the blow-up set $\mathcal{S}$ is \underline{finite}. 
Also, by using the Green representation formula for $v_{k}$ in $X$ then, 
in the usual way 
(see e.g. \cite{Bartolucci_Chen_Lin_Tarantello} or \cite{Tarantello_Book}),
for every compact set $K\subset X \setminus \mathcal{S}$ 
we can check that,
\begin{equation}\label{min_max_v_k}
\max_{K}v_{k}
-
\min_{K} v_{k}
\leq C,  
\end{equation}
with suitable $C>0$ depending only on $K$. 
 
Thus, by \eqref{min_max_v_k}, 
we can apply alternative (iii) of Proposition \ref{prop_0.3}
and Theorem \ref{thm_1} to obtain (in conformal coordinates) the local blow-up
description of $v_{k}$ around any  blow-up point $q\in \mathcal{S}$. 
At the same time the property \eqref{min_max_v_k} also allows us to patch together 
such local information and arrive at the desired statement in (ii).

\end{proof}

Furthermore, as a direct consequence of Theorem \ref{thm_blow_up_global}, 
we can recover 
a compactness result, well known in the non-collapsing case:
\begin{corollary}
Under the assumptions of Theorem \ref{thm_blow_up_global}, if
$$
\limsup_{k \to +\infty}
\int_{X} R_{k}e^{\xi_{k}}<8\pi
$$
then, along a subsequence, 
$\xi_{k}\longrightarrow \xi_{0}$ in
$C^{1,\alpha}(X)$ with $\xi_{0}$ satisfying \eqref{n.3.12}.   
\end{corollary}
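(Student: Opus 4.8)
Let $v_{k}$ (written $\xi_{k}$ in the statement) be the solution sequence of \eqref{global_problem}. The plan is to use the smallness hypothesis $\limsup_{k\to+\infty}\int_{X}R_{k}e^{v_{k}}<8\pi$ to rule out the blow-up alternative (ii) in Theorem \ref{thm_blow_up_global}, so that the compactness alternative (i) must hold; the whole point is that the mass quantization of Theorem \ref{thm_1} raises the relevant threshold from the elementary value $4\pi$ of \cite{Brezis_Merle} to $8\pi$.

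First I would record the total mass identity. Integrating \eqref{global_problem} over $X$, exactly as in \eqref{f_integral_non_zero}, gives $\int_{X}R_{k}e^{v_{k}}=\int_{X}f_{k}\longrightarrow\int_{X}f_{0}$ (using $f_{k}\to f_{0}$ in $L^{p}(X)$, $p>1$, hence in $L^{1}(X)$ since $X$ is compact). In particular the total masses converge, and
\begin{equation*}
\lim_{k\to+\infty}\int_{X}R_{k}e^{v_{k}}=\int_{X}f_{0}\leq\limsup_{k\to+\infty}\int_{X}R_{k}e^{v_{k}}<8\pi .
\end{equation*}

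Next I apply Theorem \ref{thm_blow_up_global}: along a subsequence, either (i) holds — in which case $v_{k}\longrightarrow v_{0}$ in $C^{2}(X)$, hence a fortiori in $C^{1,\alpha}(X)$, with $v_{0}$ solving \eqref{n.3.12}, and we are done — or (ii) holds with a nonempty finite blow-up set $\mathcal{S}$. I claim the latter is impossible. Indeed, in both sub-cases (a) and (b) of (ii), Theorem \ref{thm_blow_up_global} (ultimately via Theorem \ref{thm_1}) gives $\sigma(q)\in 8\pi\N$, so $\sigma(q)\geq 8\pi$, for every $q\in\mathcal{S}$. Passing to the limit in the measures and testing against the constant function $1$ on the compact manifold $X$: in case (a), $R_{k}e^{v_{k}}\rightharpoonup\sum_{q\in\mathcal{S}}\sigma(q)\delta_{q}$ yields $\lim_{k}\int_{X}R_{k}e^{v_{k}}=\sum_{q\in\mathcal{S}}\sigma(q)\geq 8\pi$; in case (b), $R_{k}e^{v_{k}}\rightharpoonup R_{0}e^{v_{0}}+\sum_{q\in\mathcal{S}}\sigma(q)\delta_{q}$ with $R_{0}e^{v_{0}}\geq 0$ yields $\lim_{k}\int_{X}R_{k}e^{v_{k}}=\int_{X}R_{0}e^{v_{0}}+\sum_{q\in\mathcal{S}}\sigma(q)\geq 8\pi$. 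Either way this contradicts the displayed strict inequality. Hence $\mathcal{S}=\emptyset$, alternative (i) holds along the chosen subsequence, and the corollary follows.

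I do not expect any genuine obstacle here: the only step deserving a word is the passage from weak-$*$ convergence of the Radon measures $R_{k}e^{v_{k}}\,dA$ to convergence of their total masses, which is immediate on a compact $X$ by testing with $\varphi\equiv 1$ (the total masses being in any case uniformly bounded). All the content of the statement is carried by the $8\pi$ quantization of Theorem \ref{thm_1}, which is precisely what forbids a single blow-up point from contributing less than $8\pi$.
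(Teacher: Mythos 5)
Your argument is correct and is exactly the intended one: the paper offers no written proof, presenting the corollary as a ``direct consequence'' of Theorem \ref{thm_blow_up_global}, and the natural reasoning is precisely yours --- the total mass identity $\int_{X}R_{k}e^{v_{k}}=\int_{X}f_{k}\to\int_{X}f_{0}$ together with the quantization $\sigma(q)\in 8\pi\N$ forces any nonempty blow-up set to carry limiting mass at least $8\pi$, contradicting the hypothesis, so alternative (i) must hold. No gaps; the only cosmetic point is that alternative (i) actually yields convergence in a stronger norm than the $C^{1,\alpha}(X)$ claimed.
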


\section{Local estimates in case of least blow-up mass.}

This section is devoted to provide a more detailed description about 
a sequence $\xi_{k}$ 
which 
satisfies the local problem \eqref{2.0} 
and admits
a blow-up point at the origin and \eqref{2.3} holds. 
Namely, $\xi_{k}$ blows-up at a point 
where different zeroes of the weight function $W_{k}$ "collapse" together.

We focus to the case of \underline{least} blow-up mass $8\pi$, namely when
\eqref{2.8} holds as follows: 
\begin{equation}\label{3.0}
m=
\lim_{\delta \searrow 0  }
\underset{k \to +\infty }{\underline{\lim} }
\frac{1}{2\pi}\int_{B_{\delta}}W_{k}e^{\xi_{k}} =4. 
\end{equation}
This is a first important step towards 
the more involved description of "multiple" blow-up profiles,
and we refer to \cite{Lee_Lin_Tarantello_Yang}
for some progress in this direction,  $s=2$ in \eqref{2.3}.

\

With the notation of the previous section, 
a first consequence of $\eqref{3.0}$ is the following:
  
\begin{proposition}\label{prop_3.1}
Let $\xi_{k}$ satisfy \eqref{2.0}-\eqref{2.7} and suppose that \eqref{3.0}  holds.
Then $m=\mu=4$ and the sequence $\varphi_{k}$ in \eqref{2.12} must satisfy 
alternative $a)$ of $(iii)$ in Proposition \ref{prop_0.3}, 
with a \textbf{unique} blow-up point $q_{0}$.   
 
Furthermore if $W_{1}(q_{0})=0$  
(i.e. $q_{0}\in Z_{1}=\{ q_{1},\ldots,q_{s} \} $) 
then $q_{0}$ must satisfy \eqref{2.29a}, 
namely different zeroes of $W_{1,k}$ collapse at $q_{0}$.
\end{proposition}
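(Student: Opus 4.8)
The plan is to exploit the relation $m^{2}-\mu^{2}=4(1+\alpha)(m-\mu)$ from \eqref{2.33} together with the integrality results Claim~1 and Claim~2 (see \eqref{lem_2.2}, \eqref{lem_2.4}) established in the proof of Theorem~\ref{thm_2.1}, and then rule out the various blow-up alternatives for $\varphi_{k}$ one by one, using the fact that the total limiting mass is the minimal value $m=4$.

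\begin{proof}
By \eqref{3.0} we have $m=4$. Recall from \eqref{2.19} that $\mu\leq m=4$, and that $\varphi_{k}$ satisfies one of the alternatives (i), (ii), a) of (iii), or b) of (iii).

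\textbf{Step 1: $\mu=4$ and (i), (ii), b) of (iii) are excluded.}
If $\varphi_{k}$ satisfies (i) then $\mu=0$, and \eqref{2.33} gives $m=4(\alpha+1)\geq 12$ (as $\alpha\geq 2$), contradicting $m=4$. If $\varphi_{k}$ satisfies (ii) or alternative b) of (iii), then by \eqref{2.30} we have $\mu=2(\alpha+1+\sigma)$ with $\sigma>0$, hence $\mu>2(\alpha+1)\geq 6$; combined with $\mu\leq 4$ this is a contradiction. Therefore $\varphi_{k}$ must satisfy alternative a) of (iii): blow-up with concentration at a finite nonempty set $S_{\varphi}$, and by \eqref{2.25}, $2\pi\mu=\sum_{q\in S_{\varphi}}\sigma(q)$ with each $\sigma(q)\geq 4\pi$; by Claim~1 (see \eqref{lem_2.2}) in fact $\sigma(q)\in 8\pi\N$, so $2\pi\mu\in 8\pi\N$ and hence $\mu\in 4\N$. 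Since $0<\mu\leq 4$ this forces $\mu=4$, and consequently $\# S_{\varphi}=1$, say $S_{\varphi}=\{q_{0}\}$ with $\sigma(q_{0})=8\pi$.

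\textbf{Step 2: $m=\mu=4$.}
This is immediate from Step~1: $m=4$ by \eqref{3.0} and $\mu=4$ by Step~1. (Consistency with \eqref{2.33} is clear since $m=\mu$.)

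\textbf{Step 3: if $q_{0}\in Z_{1}$ then $q_{0}$ is a collapsing zero.}
Suppose $W_{1}(q_{0})=0$, i.e. $q_{0}\in Z_{1}=\{q_{1},\dots,q_{s}\}$, but $q_{0}$ is \emph{not} a collapsing point, i.e. $q_{0}=q_{j}$ for exactly one index $j$ and $q_{0}\neq q_{l}$ for $l\neq j$. Then the second line of \eqref{2.24a} applies and gives $\sigma(q_{0})=8\pi(1+\alpha_{j})$. But $\alpha_{j}\in\N$ with $\alpha_{j}\geq 1$, so $\sigma(q_{0})\geq 16\pi$, whence $2\pi\mu=\sigma(q_{0})\geq 16\pi$, i.e. $\mu\geq 8$, contradicting $\mu=4$ from Step~1. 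Hence, if $q_{0}\in Z_{1}$, then $q_{0}$ must coincide with $m\geq 2$ of the points $q_{1},\dots,q_{s}$, which is exactly condition \eqref{2.29a}.
\end{proof}

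The main subtlety is that several of the needed facts---the local mass values \eqref{2.24a}, the quantization Claim~1, and the Pohozaev-type identity \eqref{2.33}---must already be in hand; given those, the argument is a short bookkeeping of which alternatives are compatible with the minimal total mass $m=4$. The only place one must be careful is in Step~1, distinguishing alternative a) of (iii) from alternative b): here one uses that in case b) the blow-up point would have to sit at a collapsing zero of $W_{1,k}$ and the integrability constraint \eqref{alpha_q}/\eqref{2.30} pushes $\mu$ strictly above $2(\alpha+1)$, which is already $\geq 6>4$; this is the observation that really closes the case.
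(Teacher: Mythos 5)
Your proposal is correct and follows essentially the same route as the paper's proof: exclude alternative (i) via the Pohozaev relation \eqref{2.33}, exclude (ii) and b) of (iii) via \eqref{2.30} (since $\mu=2(\alpha+1+\sigma)>2(\alpha+1)\geq 6$ is incompatible with $\mu\leq m=4$), deduce $m=\mu=4$ from the quantization of the local masses, and rule out a non-collapsing blow-up point via the second line of \eqref{2.24a}. The only cosmetic difference is that you derive $\mu\in 4\N$ from Claim 1 within alternative a) of (iii), while the paper invokes Claim 2 (\eqref{lem_2.4}) directly and reorders the exclusions slightly; the substance is identical.
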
	
\begin{proof}
First of all we can exclude that $\varphi_{k}$ satisfies alternative (i).
Indeed, in this case $\mu=0$, and so by 
\eqref{2.33}  we would have $m=4(1+\alpha)>4$, in contradiction with \eqref{3.0}. 
Therefore,
$0<\mu \leq m$, and since
by \eqref{lem_2.4} we have $\mu\in 4\N$, we see that  necessarily $m=\mu=4$.
This fact allows us to conclude also that $\varphi_{k}$ cannot satisfy 
alternative (ii) or alternative b) of (iii). 
In fact, in this situation, by \eqref{2.30}, we would have:
$\mu=2(\alpha+1+\sigma)$, with $\alpha$ and $\sigma$ positive integers, 
a contradiction to \eqref{3.0}. 
So $\varphi_{k}$ can only satisfy alternative a) of (iii)
with a single blow-up point $q_{0}$. 
Furthermore, if
$q_{0}\in Z_{1}=\{ q_{1},\ldots, q_{s} \} $, 
then \eqref{2.24a} together with \eqref{3.0} allow us to conclude that,
$s\geq 2$ and different zeroes of $W_{1,k}$ must converge to $q_{0}$. 
\end{proof}

To proceed further, we use a different more convenient normalization for $\xi_{k}$
from the previous section. In fact, 
we observe that,  
the translated function:
$\xi_{k}(x+x_{k})$ 
(with the point $x_{k}$ defined in \eqref{2.2})
satisfies an analogous problem, 
possibly in a smaller ball around the origin, 
where properties \eqref{2.0}-\eqref{2.7} continue to hold, 
simply with the point 
$p_{j,k}$ replaced by $(p_{j,k}-x_{k})\longrightarrow 0$, 
as $k\longrightarrow +\infty$. 
More importantly, since 
$x_{k}\longrightarrow 0$ for  $k\longrightarrow +\infty$,
then both
sequences  $\xi_{k}(x)$ and its translated $\xi_{k}(x+x_{k})$
admit the same value of $m$ in \eqref{2.8}. 
Thus, without loss of generality, we can assume that, 
\begin{equation}\label{3.1}
\xi_{k}(0)
=
\max_{\overline{B}_{r}}\xi_{k}
\longrightarrow +\infty,
\; \text{ as } \; 
k\longrightarrow +\infty. 
\end{equation}
Also, after relabeling the indices, 
(along a subsequence)
we may take:
\begin{equation}\label{3.2} 
0
\leq 
\vert p_{1,k} \vert 
\leq
\vert p_{2,k} \vert 
\leq \ldots \leq
\vert p_{s,k} \vert,
\; \text{ with } \; 
s\geq 2;
\end{equation}
so that, 
\begin{equation}\label{3.3}
\tau_{k}=\vert p_{s,k} \vert >0, 
\end{equation}
\begin{equation}
q_{j,k}=\frac{p_{j,k}}{\tau_{k}} \longrightarrow q_{j}, 
\; \text{ as } \; 
k\longrightarrow +\infty,
\; \forall \; j=1,\ldots,s,
\end{equation}
\begin{equation}\label{3.5}
0
\leq 
\vert q_{1} \vert 
\leq
\vert q_{2} \vert 
\leq \ldots \leq
\vert q_{s} \vert 
=1.
\end{equation}
It is important to notice that, with this new normalization, 
we no longer expect  \eqref{2.31} to hold. We have:

\begin{thm}\label{prop_3.2}
Let $\xi_{k}$ satisfy \eqref{2.0},\eqref{2.1},\eqref{2.3}-\eqref{2.7} and assume
\eqref{3.0}-\eqref{3.5}.
Then the points
$p_{j,k} \neq 0,\;1\leq j \leq s$ and 
there exists $s_{1}\in \{ 2,\ldots,s \}$ such that (along a subsequence)
the following holds, 
as $k\longrightarrow +\infty$:
\begin{equation}\label{3.6}
z_{j,k}:=\frac{p_{j,k}}{\vert p_{s_{1},k} \vert }
\longrightarrow 
z_{j}\neq 0,
\; \forall \;
j=1,\ldots,s_{1},
\end{equation} 
and if $s_{1}<s$ then
\begin{equation}\label{3.7}
\begin{split} 
 & \;  
\frac{p_{j,k}}{\vert p_{s,k} \vert }\longrightarrow q_{j} \neq 0
\; \text{ and } \; 
\frac
{\vert p_{j,k} \vert }
{\vert p_{s_{1},k} \vert }
\longrightarrow 
+\infty,
\; \forall \; 
 j=s_{1}+1,\ldots,s. 
\end{split}  
\end{equation}
Moreover
\begin{equation}\label{3.8}
\xi_{k}(0)
+
2\ln\vert p_{s_{1},k} \vert 
+
2 \sum_{j=1}^{s}\alpha_{j}\ln\vert p_{j,k} \vert 
\longrightarrow 
+\infty.
\end{equation}
\end{thm}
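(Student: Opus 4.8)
The plan is to run an induction on $s$ (the number of collapsing zeroes) analogous in spirit to Theorem \ref{thm_2.1}, but carried out at the level of the rescaled sequences, so as to extract the scale $|p_{s_1,k}|$ at which the first nontrivial cluster of zeroes separates from the blow-up point $0$. Begin by applying Proposition \ref{prop_3.1}: $m=\mu=4$, and the rescaled sequence $\varphi_k$ from \eqref{2.12} blows up with concentration at a \emph{single} point $q_0$, carrying mass $8\pi$. First I would treat the case $q_0\notin Z_1$, i.e. $W_1(q_0)>0$: then by Theorem \ref{theorem_2_new} the blow-up of $\varphi_k$ is a ``regular'' $8\pi$-bubble, and one may use the standard second-derivative/Pohozaev-type estimates (as in \cite{Li_Shafrir}, \cite{Li_Harnack}) to conclude that all $|p_{j,k}|$ are comparable to $\tau_k$, so $s_1=s$, $z_{j,k}\to q_j\ne 0$ for all $j$ (after dividing by $\tau_k=|p_{s,k}|$), and \eqref{3.6} holds with \eqref{3.7} vacuous. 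In this case \eqref{3.8} reduces, via $\varphi_k(0)=\xi_k(0)+2(\alpha+1)\ln\tau_k\to+\infty$ (which is exactly the statement that $q_0=\lim z_{j,k}$ is the max point of the limiting bubble, hence $\varphi_k(0)$ is bounded below — actually one must be slightly careful here since $0$ need not be the concentration point), to a direct computation. The cleaner route is: $\varphi_k(0)\to+\infty$ because $0$ lies in the region where $\varphi_k$ concentrates after rescaling, which follows from Proposition \ref{prop_3.1} together with the fact (Theorem \ref{theorem_2_new}) that a single $8\pi$ bubble has a definite profile; then substituting $\varphi_k(0)=\xi_k(0)+2(\alpha+1)\ln\tau_k$ and $\sum_j\alpha_j\ln|p_{j,k}| = \alpha\ln\tau_k+O(1)$ (comparability) gives \eqref{3.8}.

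Next I would handle the genuinely collapsing case $q_0\in Z_1$. By the last assertion of Proposition \ref{prop_3.1}, $q_0=q_{j_1}=\dots=q_{j_m}$ with $m\ge 2$: several of the rescaled poles $q_{j,k}=p_{j,k}/\tau_k$ converge to the common point $q_0$, so at scale $\tau_k$ those poles are still collapsing. The idea is to zoom in again at $q_0$ at the correct finer scale. Concretely, set $\rho_k$ to be the diameter of the cluster $\{q_{j_\ell,k}:\ell=1,\dots,m\}$ (equivalently, after translating so that $q_0$ becomes the origin of the new coordinates, $\rho_k=\max_\ell|q_{j_\ell,k}-(\text{center})|$), perform the rescaling $\psi_k(y)=\varphi_k(q_0+\rho_k y)+2(\text{appropriate power})\ln\rho_k$, and observe that $\psi_k$ satisfies a problem of the same type \eqref{2.0}–\eqref{2.7} but now with only the $m\le s-1$ poles of the cluster collapsing (the argument that $m\le s-1$ is exactly the one already given in the proof of Theorem \ref{thm_2.1}, using $\tau_k=|p_{s,k}|$ so that at least one pole, namely $p_{s,k}$, has $|q_{s,k}|=1$ and does not collapse to $q_0$; here the ordering \eqref{3.2} is what makes $|q_s|=1$). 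The blow-up mass of $\psi_k$ at its concentration point is again $8\pi$ — this needs a short argument: the mass can only decrease under rescaling and stay $\ge 4\pi$ by Remark \ref{sigma_bigger_4_pi}, while the quantization Theorem \ref{thm_1} (applied to the cluster, legitimate since $m\ge 2$ and $m\le s-1<s$) forces it into $8\pi\mathbb N$, and total mass $8\pi$ of $\varphi_k$ prevents it from being $\ge 16\pi$, so it equals $8\pi$. Thus by the inductive hypothesis applied to $\psi_k$ (with $m<s$ collapsing zeroes) we obtain the analogue of \eqref{3.6}–\eqref{3.8} for $\psi_k$; translating back through the two rescalings $\tau_k$ and $\rho_k$, the scale $|p_{s_1,k}|$ in the original variables is $\tau_k\cdot\rho_k$ times the scale produced by the induction, and one reads off \eqref{3.6}, \eqref{3.7}, \eqref{3.8}. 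The indices $j=s_1+1,\dots,s$ in \eqref{3.7} are precisely the poles \emph{not} in the cluster: for those, $p_{j,k}/\tau_k\to q_j\ne q_0$, so $|p_{j,k}|\asymp\tau_k\gg\tau_k\rho_k\asymp|p_{s_1,k}|$, giving the divergence $|p_{j,k}|/|p_{s_1,k}|\to+\infty$.

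A separate point that must be secured throughout is that $p_{j,k}\ne 0$ for every $j$ and every large $k$. This I would extract from the fact that $\xi_k(0)=\max\xi_k\to+\infty$ is a blow-up point of the \emph{regular} part: if some $p_{j,k}=0$, then $W_k$ vanishes to order $2\alpha_j\ge 2$ at the origin, and $\xi_k$ is the regular part of a solution with a genuine Dirac-type singularity there; one then compares with Theorem \ref{theorem_2_new} and \eqref{2.24a} — a blow-up at a point that is simultaneously a zero of $W_k$ of order $2\alpha_j$ would force blow-up mass $\ge 8\pi(1+\alpha_j)>8\pi$ there, contradicting $\sigma=8\pi$. (Equivalently, this is where the hypothesis $\sigma=8\pi$, i.e. \eqref{3.0}, is used in an essential way; the first bullet of the main Theorem* records exactly this conclusion.) Once $p_{j,k}\ne 0$ is known, $|p_{s_1,k}|>0$ and all the ratios in \eqref{3.6}–\eqref{3.7} are well defined.

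The main obstacle I anticipate is the \emph{base case and the bookkeeping of masses across the two rescalings}: one must verify rigorously that, after zooming into the cluster at scale $\rho_k$, (a) the rescaled problem again falls under the hypotheses of this section — in particular that the blow-up of $\psi_k$ is again with concentration and with a \emph{single} blow-up point of mass exactly $8\pi$ — and (b) that the ``$O(1)$'' errors accumulated in passing $\xi_k\to\varphi_k\to\psi_k$ and back really are bounded, so that the divergence in \eqref{3.8} is genuine and not an artifact of unbounded correction terms. Controlling (b) is where the bounded-oscillation property \eqref{2.1} and the Green's-representation identities \eqref{2.9}, \eqref{2.17} do the work: each rescaling preserves a bounded-oscillation estimate on dyadic circles (as already noted after \eqref{2.17}), and $\varphi_k(0)=\xi_k(0)+2(\alpha+1)\ln\tau_k$, $\psi_k(0)=\varphi_k(q_0+\rho_k\cdot 0)+\dots$ translate the ``$\to+\infty$'' of the innermost bubble's height back to \eqref{3.8} after a telescoping of the logarithmic shifts against $2\sum_j\alpha_j\ln|p_{j,k}|$, split according to which poles are inside versus outside the cluster.
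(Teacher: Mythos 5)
Your proposal is correct and follows essentially the same route as the paper: Proposition \ref{prop_3.1} plus iterated rescaling at the scale of the collapsing cluster, with Theorem \ref{thm_1} and $m_{\varphi}\le m=4$ forcing each successive blow-up mass to equal $8\pi$, termination because the number of collapsing zeroes strictly decreases, the contradiction with \eqref{2.24a} (mass $4(1+\alpha_{1})>4$) at the innermost scale to get $p_{j,k}\neq 0$, and the telescoping of logarithmic shifts to obtain \eqref{3.8}. The only point worth pinning down is the one you hedge on: since $\xi_{k}(0)=\max_{\overline{B}_{r}}\xi_{k}$ by \eqref{3.1}, each rescaled function attains its supremum at the origin, so $q_{0}=0$ is automatically the unique concentration point at every stage; this makes your case split coincide with ``some $q_{j}=0$ or not'', renders the Pohozaev/comparability digression and the translation to the cluster's center unnecessary (the paper simply takes $\tau_{1,k}=\max$ of the collapsing $\vert q_{j,k}\vert$), and directly yields $\varphi_{k}(0)\to+\infty$.
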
	

\begin{proof}
If 
\begin{equation}\label{3.9}
q_{j,k}
=
\frac{p_{j,k}}{ \vert p_{s,k} \vert }	
\longrightarrow q_{j}\neq 0,
\; \forall \; j=1,\ldots,s
\end{equation}
then it suffices to choose $s=s_{1}$. Indeed, we only need to check \eqref{3.8}.
To this purpose, by the normalization \eqref{3.1}, we see that
$q_{0}=0$ 
in Proposition \ref{prop_3.1}. So for the sequence
$\varphi_{k}$ in \eqref{2.12} we have
\begin{equation}\label{3.10}
\varphi_{k}(0)
=
\sup_{D_{k}}\varphi_{k}
\longrightarrow 
+\infty.		
\end{equation} 
On the other hand, by \eqref{2.12}, \eqref{3.3} and \eqref{3.9}, we find:
\begin{equation*}
\begin{split}
\varphi_{k}(0)
= &
\;\xi_{k}(0)+2\ln \tau_{k}
+
2\alpha \ln\vert p_{s,k} \vert 	\\
= &
\;\xi_{k}(0)
+
2\ln\vert p_{s,k} \vert 
+
2 \sum_{j=1}^{s}\alpha_{j}\ln\vert p_{j,k} \vert 
+
O(1),
\end{split}
\end{equation*}
and so, \eqref{3.8} follows with $s_{1}=s$ in this case. 
Since $\vert q_{s} \vert =1	$, let us now assume
that there exists 
$\overline{s}\in \{ 1,\ldots, s-1 \} $ such that, 
\begin{equation}\label{3.11}
q_{1}=q_{2}=\ldots=q_{\overline{s}}=0
,
\; \text{ and } \; 
q_{j}\neq 0
\; \text{ for } \; 
j=\overline{s}+1,\ldots, s.
\end{equation}
Since the origin is a blow-up point for $\varphi_{k}$,   
by Proposition \ref{prop_3.1} we know that \eqref{3.11} must hold 
with $\overline{s}\geq 2$.
Clearly, Theorem \ref{thm_1} applies to $\varphi_{k}$ around the origin, 
and it implies that, 
\begin{equation*}
m_{\varphi}
=
\lim_{\delta \searrow 0  }
\lim_{k \to +\infty }
\frac{1}{2\pi}
\int_{B_{\delta}} W_{1,k}e^{\varphi_{k}}
\in 
4\N.
\end{equation*}
Since $m_{\varphi}\leq m=4$, 
we see that necessarily
$m_{\varphi}=4.$ 
This information allows us to apply Proposition \ref{prop_3.1} to $\varphi_{k}$ 
(around the origin), with 
$
0
\leq 
\vert q_{1,k} \vert 
\leq
\vert q_{2,k} \vert 
\leq \ldots \leq
\vert q_{\overline{s},k} \vert 
\longrightarrow 0
,
\; \text{ as } \; 
k\longrightarrow +\infty
$.
Therefore, by setting:  
\begin{equation}\label{tau_1_k}
\tau_{1,k}=\vert q_{\overline{s},k} \vert 
\; \text{ and } \; 
\overline{\alpha}
=
\sum_{j=1}^{\overline{s}}\alpha_{j}\in \N
,\;
\bar{\alpha}\geq 2,
\end{equation}
for the new "scaled" function: \begin{equation*}
\varphi_{1,k}(x)
:=
\varphi_{k}(\tau_{1,k}x)
+
2(\overline{\alpha}+1)\ln\tau_{1,k}
\end{equation*}
we know that (along a subsequence),  $\varphi_{1,k}(0)\longrightarrow \infty$, as $k\longrightarrow +\infty$, 
and actually the origin is the only blow-up point of $\varphi_{1,k}$, 
where "concentration"  occurs, 
as described by alternative a) of (iii) in Proposition \ref{prop_0.3}.

Furthermore (along a subsequence), as $k\longrightarrow +\infty$, we have:
\begin{equation*}
\frac{q_{j,k}}{\tau_{1,k}}
\longrightarrow q_{j}^{(1)}
,	
\; \forall \;
j=1,\ldots, \overline{s}; 
\end{equation*}

with 
$
0\leq \vert q_{1}^{(1)} \vert 
\leq 
\vert q_{2}^{(1)} \vert 
\leq \ldots \leq 
\vert q_{\overline{s}}^{(1)} \vert 
=
1
$. If
\begin{equation}\label{formula_not_to_zero}
\; q_{j}^{(1)}\neq 0,
\; \forall \; j=1,\ldots,\overline{s};
\end{equation}

then from \eqref{3.11} we deduce that, as $k\longrightarrow +\infty$,
\begin{equation}\label{3.15}
\begin{split}
&
\frac
{\vert p_{j,k} \vert }
{\vert p_{\overline{s},k} \vert }
=
\frac
{\vert q_{j,k} \vert }
{\tau_{1,k}}
\longrightarrow 
\vert q_{j}^{(1)} \vert 
\neq 0
,
\; \forall \; j=1,\ldots,\overline{s}, \\
& 
\frac{\vert p_{j,k} \vert }{\vert p_{s,k} \vert }
\longrightarrow \vert q_{j} \vert \neq 0,\; 
\frac
{\vert p_{j,k} \vert}
{\vert p_{\overline{s},k} \vert }
=
\frac
{\vert q_{j,k} \vert }
{\tau_{1,k}}
\longrightarrow +\infty
,
\; \forall \;
j=\overline{s}+1,\ldots, s.
\end{split}
\end{equation}
So, if \eqref{formula_not_to_zero} holds then \eqref{3.6} and \eqref{3.7} 
are verified with $s_{1}=\overline{s}$.

Moreover, to show that also \eqref{3.8} holds with $s_{1}=\bar{s}$,
simply note that 
$\varphi_{1,k}(0)\longrightarrow +\infty$, as $k\longrightarrow +\infty$,
and in view of 
\eqref{2.12},\eqref{tau_1_k},\eqref{3.15} we have 
\begin{equation*}
\begin{split}
\varphi_{1,k}(0)
= \; &
\varphi_{k}(0)+2(\bar{\alpha}+1)\ln \tau_{1,k} \\
= \; &
\xi_{k}(0)
+
2(\alpha+1)\ln\vert p_{k,s} \vert 
+
2(\bar{\alpha}+1)\ln \frac{\vert p_{k,\bar{s}} \vert }{\vert p_{k,s} \vert } \\
= \; &
\xi_{k}(0)+2(\bar{\alpha}+1)\ln\vert p_{k,\bar{s}} \vert
+
2(\alpha-\bar{\alpha})\ln\vert p_{k,s} \vert \\
= \; &
\xi_{k}(0) +2\ln \vert p_{k,\bar{s}} \vert+
\sum_{j=1}^{s}2\alpha_{j}\ln \vert p_{k,j} \vert + O(1).
\end{split}
\end{equation*}
Hence the desired conclusion follows in this case. 

On the contrary, if $q_{j}^{(1)}$ vanishes for some $j\in \{ 1,\ldots,\bar s \} $, 
then we can repeat the argument above for the sequence $\varphi_{1,k}$,
and eventually continue in this way,
by taking further scalings.  
However, 
at each step the number of "collapsing" zeroes decreases, 
and moreover Proposition \ref{prop_3.1} applies
to each of the new scaled sequences.
Therefore, this procedure must stop after finitely many steps. 

In case we assume "by contradiction" that 
$p_{1,k}=0$ identically, then we would end up with a sequence 
blowing-up at zero 
and satisfying a Liouville type problem with a weight function
vanishing at the origin with the same order of 
$\vert x \vert ^{2\alpha_{1}}$.
Thus, to such a sequence,  we can apply \eqref{alpha_q} 
(see \cite{Bartolucci_Tarantello_Comm_Math_Phys}) and obtain a blow-up mass 
$m= \frac{\sigma(0)}{2\pi}=4(1+\alpha_{1})>4$, a contradiction. 
Hence $p_{1,k}\neq 0$,  and so by \eqref{2.3} and \eqref{3.2} we conclude that
$p_{j,k}\neq 0$ for all $1\leq j \leq s$, as claimed. 
Consequently, after finitely many steps, we must arrive at sequence
which blows-up  
at zero and satisfying a Liouville-type problem with a weight function
never vanishing around the origin. 
In other words we have found 
$s_{1}\in \{ 2,\ldots, s \}$ for which \eqref{3.15} holds with 
$\bar{s}$ replaced by $s_{1}$.  
And as before, we check that \eqref{3.8} is also satisfied.
\end{proof}

Theorem \ref{prop_3.2} identifies the appropriate scale to use in order 
to gain good control on $\xi_{k}$ in a tiny neighborhood around the origin.
Indeed, according to Theorem \ref{prop_3.2}, we define:
\begin{equation}\label{4.1}
\varepsilon_{k}
=
\vert p_{s_{1},k} \vert 
\longrightarrow 0
,
\; \text{ as } \; 
k\longrightarrow +\infty
\end{equation}
and let
\begin{equation*}
z_{j,k}
=
\frac{p_{j,k}}{\varepsilon_{k}}
\longrightarrow z_{j}\neq 0
\; \text{ for } \; 
j=1,\ldots,s_{1}.
\end{equation*}
If $s_{1}<s$, we set:
\begin{equation*}
\varepsilon_{j,k}
:=
\frac{\varepsilon_{k}}{\vert p_{j,k} \vert }
\longrightarrow 
0
,
\;
\hat p_{j,k}
:=
\frac{p_{j,k}}{\vert p_{j,k} \vert }
\longrightarrow 
\hat p_{j}
,\;
\vert \hat p_{j} \vert =1
\; \forall \; 
j=s_{1}+1,\ldots,s.
\end{equation*}
We define 
\begin{equation*}
v_{k}(x)
=
\xi_{k}(\varepsilon_{k}x)
+
2\ln \varepsilon_{k}
+
\sum_{j=1}^{s}2\alpha_{j}\ln\vert p_{j,k} \vert,
\end{equation*}
satisfying:
\begin{equation*}
-\Delta v_{k}(x)
=
\Pi_{j=1}^{s_{1}}
\vert x-z_{j,k} \vert^{2\alpha_{j}}
V_{k}(x)e^{v_{k}(x)}
\; \text{ in } \;
D_{k}=B_{\frac{r}{\varepsilon_{k}}} 
\end{equation*}
with
\begin{equation}\label{4.8}
V_{k}(x)
=
\Pi_{j=s_{1}+1}^{s}
\vert \varepsilon_{j,k}x- \hat p_{j,k}  \vert^{2\alpha_{j}}
h_{k}(\varepsilon_{k}x)A_{k}
\end{equation}  
(in case $s_{1}=s$ the product term in \eqref{4.8} is omitted),
and
\begin{equation*} 
A_{k}
=
\Pi_{j=1}^{s_{1}}(\frac{\varepsilon_{k} }{\vert p_{j,k} \vert })^{2 \alpha_{j}}
\longrightarrow 
\Pi_{j=1}^{s_{1}}(\frac{1}{\vert z_{j} \vert })^{2 \alpha_{j}}
=
A_{0} >0
,
\; \text{ as } \; 
k\longrightarrow +\infty.
\end{equation*}
Recall that, $h_{k}$ satisfies \eqref{1.14}
and \eqref{2.6} in $B_{r}$,
and therefore:
\begin{equation}\label{4.10}
R_{k}(x)
:=
\Pi_{j=1}^{s_{1}}\vert x-z_{j,k} \vert^{2 \alpha_{j}}V_{k}(x)
\longrightarrow 
R_{0}(x)
:=
A_{0}\Pi_{j=1}^{s_{1}}\vert x-z_{j} \vert^{2\alpha_{j}}. 
\end{equation}
Clearly, the zero set of $R_{0}$ in \eqref{4.10} is given by:
\begin{equation*}
Z=\{ z_{1},\ldots,z_{s_{1}} \},
\end{equation*}
where the $z_{j}$'s may \underline{not} be necessarily distinct, but they satisfy:
\begin{equation}\label{4.13}
0<\delta_{0}<
\vert z_{1} \vert 
\leq 
\vert z_{2} \vert 
\leq \ldots \leq
\vert z_{s_{1}} \vert 
<
\frac{L_{0}}{4}
\end{equation}
with suitable $\delta_{0}>0$ and $L_{0}\geq 1$.
Also we have:
\begin{equation*}
\int_{D_{k}} R_{k}(x)e^{v_{k}}\leq C.
\end{equation*}
More importantly, in view of
\eqref{3.6},\eqref{3.7},\eqref{3.8}, we can check that,  
\begin{equation*}
v_{k}(0)
=
\max_{\overline{D}_{k}}v_{k}\longrightarrow +\infty,
\; \text{ as } \; 
k\longrightarrow +\infty.
\end{equation*}
By \eqref{3.0}, 
we see that the origin is the \underline{only} blow-up point of $v_{k}$, 
where the well known blow-up analysis of
\cite{Brezis_Merle},\cite{Li_Shafrir},\cite{Bartolucci_Tarantello_Comm_Math_Phys} applies. 
As a consequence we find:
\begin{equation}\label{4.14}
R_{k}e^{v_{k}}\rightharpoonup 8\pi\delta_{0}
\; \text{ weakly in the sense of measures,  } \;
\; \text{ as } \;  
k\longrightarrow +\infty, 
\end{equation}
locally on compact sets. 
Furthermore (see Corollary 5.6.57 of \cite{Tarantello_Book}), 
for every $R>0$
the following well known estimate holds:
\begin{equation}\label{4.15}
\vert 
v_{k}(x)
-
\ln 
\frac
{e^{v_{k}(0)}}
{(1+\frac{e^{v_{k}(0)}}{8}R_{k}(0)\vert x \vert^{2} )^{2}}
\vert 
\leq C_{R}
\; \text{ in  } \;
\overline{B}_{R}, 
\end{equation}
with a suitable constant $C_{R}>0$ depending on $R$ only. 
In particular, from \eqref{4.15} we derive:
\begin{equation}\label{4.16}
\vert v_{k}(x) 
+ 
v_{k}(0)
\vert 
\leq C_{R}
,
\; \forall \; x \in \partial B_{R}.
\end{equation}
\begin{equation}
\max_{\partial B_{R}}v_{k}
-
\min_{\partial B_{R}}v_{k}
\leq C_{R}.
\end{equation}

\begin{lemma}
Let $L_{0}>1$ fixed to satisfy \eqref{4.13}. 
Then for every $R>L_{0}$ there exists $C_{R}$ such that,
\begin{equation}\label{4.18}
\max_{\frac{1}{2}L_{0}\leq \vert y \vert \leq R}
\{  
v_{k}(y)+2(\overline{\alpha}+1)\ln\vert y \vert 
 \}
\leq C_{R}
\end{equation}
with
$\overline{\alpha}=\sum_{j=1}^{s_{1}}\alpha_{j}\in \N$.
\end{lemma}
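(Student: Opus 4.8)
The plan is to read the bound off directly from the sharp bubble description \eqref{4.15}, which is already available, using only that $v_k(0)\to+\infty$ and that $R_k$ does not degenerate at the origin. The key observation is that on a \emph{fixed} annulus the exponent $2(\overline{\alpha}+1)$ is no stronger than the bubble's natural exponent $4$, up to an additive constant depending on $R$.

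First I would record that, by \eqref{4.10} and the definition of $A_0$, $R_k(0)\to R_0(0)=A_0\,\Pi_{j=1}^{s_1}|z_j|^{2\alpha_j}>0$, so there is $c_1>0$ with $R_k(0)\ge c_1$ for all $k$ large, while $v_k(0)=\max_{\overline{D}_k}v_k\to+\infty$. Then, for any $y\in\overline{B}_R$ with $y\neq 0$, I would bound the denominator appearing in \eqref{4.15} from below,
\[
1+\tfrac{e^{v_k(0)}}{8}R_k(0)|y|^2\ \ge\ \tfrac{e^{v_k(0)}}{8}R_k(0)|y|^2\ \ge\ \tfrac{c_1}{8}\,e^{v_k(0)}|y|^2,
\]
and insert this into \eqref{4.15} to obtain
\[
v_k(y)\ \le\ v_k(0)-2\ln\!\Big(\tfrac{c_1}{8}\,e^{v_k(0)}|y|^2\Big)+C_R\ =\ -\,v_k(0)-4\ln|y|+C_R',
\]
with $C_R'=C_R-2\ln(c_1/8)$ depending only on $R$ (and $L_0$). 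Since $v_k(0)\ge 0$ for $k$ large, this already gives $v_k(y)+4\ln|y|\le C_R'$ on $\overline{B}_R\setminus\{0\}$.

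Finally, on the annulus $\tfrac12 L_0\le|y|\le R$ I would simply write
\[
v_k(y)+2(\overline{\alpha}+1)\ln|y|=\big(v_k(y)+4\ln|y|\big)+2(\overline{\alpha}-1)\ln|y|\le C_R'+2(\overline{\alpha}-1)\ln R,
\]
using $\overline{\alpha}=\sum_{j=1}^{s_1}\alpha_j\ge 1$ and $|y|\le R$; taking $C_R:=C_R'+2(\overline{\alpha}-1)\ln R$ finishes the proof. There is no genuine obstacle: the lemma is an immediate consequence of \eqref{4.15}, the only mildly delicate point being that the better decay rate encoded by $\overline{\alpha}$ comes for free here precisely because $R$ is held fixed — it is only when one later lets the outer radius grow like $r/\varepsilon_k$ that the exponent $2(\overline{\alpha}+1)$, reflecting the vanishing order $2\overline{\alpha}$ of $R_0$ built up from the collapsing zeros $z_1,\dots,z_{s_1}$, becomes essential.
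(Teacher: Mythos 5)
Your argument reduces the lemma to the pointwise estimate \eqref{4.15} applied on the full ball $\overline{B}_{R}$, and the algebra you perform from there is correct. The difficulty is that \eqref{4.15} is not a legitimate input on that range. The cited result (Corollary 5.6.57 of \cite{Tarantello_Book}, i.e.\ the standard single--bubble estimate of Li/Chen--Lin type) requires the weight to satisfy $0<a\leq R_{k}\leq b$ together with a gradient bound on the ball where it is invoked; here $R_{k}$ vanishes at the points $z_{j,k}\to z_{j}$, which by \eqref{4.13} lie in the annulus $\delta_{0}<\vert z\vert<\frac{L_{0}}{4}$. Hence \eqref{4.15} is directly justified only for $R<\delta_{0}$, and the region $\frac{1}{2}L_{0}\leq\vert y\vert\leq R$ of the lemma lies entirely outside that range. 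Indeed, the two--sided estimate of \eqref{4.15} on balls of arbitrary fixed (and then growing) radius is precisely what this section is building towards: it is established only at the end as \eqref{4.40_former_4.38}, \emph{using} the present lemma, \eqref{4.23} and \eqref{4.31} as intermediate steps. So reading \eqref{4.15} as valid "for every $R>0$" and deducing the lemma from it is circular relative to the paper's development; it also proves far more than the lemma asserts (on a fixed annulus \eqref{4.18} is merely a uniform upper bound $v_{k}\leq C_{R}$, whereas your computation would give the sharp decay $v_{k}\leq -v_{k}(0)-4\ln\vert y\vert+C_{R}$, essentially \eqref{4.35_former_4.33}), which should have been a warning sign.

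The paper's proof uses none of this. It first establishes the "no mass in the neck" property \eqref{4.20}: by \eqref{3.0} the total mass near the origin is $8\pi+o(1)$, while by \eqref{4.14} at least $8\pi-o(1)$ already concentrates in $B_{\frac{L_{0}}{4}\varepsilon_{k}}$, so the mass carried by $W_{k}e^{\xi_{k}}$ in the intermediate region $B_{\delta}\setminus B_{R\varepsilon_{k}}$ is smaller than any prescribed $\varepsilon$. It then argues by contradiction: if $v_{k}(y_{k})\to+\infty$ for some $y_{k}$ in the fixed annulus, the rescaled functions $\psi_{k}(x)=v_{k}(\vert y_{k}\vert x)+2(\overline{\alpha}+1)\ln\vert y_{k}\vert$ blow up at an interior point of $\{\frac12<\vert x\vert<2\}$ where the weight is uniformly positive (the zeros $z_{j,k}/\vert y_{k}\vert$ stay away from that point), so by the Brezis--Merle threshold at least $4\pi$ of mass must concentrate there --- contradicting \eqref{4.20}. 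If you want to salvage your approach you would need to first justify \eqref{4.15} on $\overline{B}_{R}$ past the zeros of $R_{k}$, and the natural way to do that is exactly the mass--concentration argument above.
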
	
\begin{proof}
We start by observing the following:

\

\underline{\textbf{Claim:}} 
\,\; For all
$
\varepsilon >0
$
there exist 
$
k_{\varepsilon}\in \N
\; \text{ and } \; 
r_{\varepsilon}>0
$
such that, for
$ \delta \in (0,r_{\varepsilon})
\; \text{ and } \; 
R\geq \frac{L_{0}}{4}$
we have: 
\begin{equation}\label{4.20}
\underset{B_{\delta}\setminus B_{R \varepsilon_{k}}}{\int}
W_{k}e^{\xi_{k}}
=
\underset{B_{\frac{\delta}{\varepsilon_{k}}}\setminus B_{R}}{\int}
R_{k}e^{v_{k}}
< \varepsilon,
\;\;\; 
\; \forall \; k\geq k_{\varepsilon};
\end{equation}
with $\varepsilon_{k}$ given in \eqref{4.1}. 

\

To establish \eqref{4.20}, recall that by \eqref{3.0} we find
$k_{\varepsilon} \in \N$ and
$r_{\varepsilon}>0$ such that 
\begin{equation*}
\int_{B_{\delta}} W_{k}e^{\xi_{k}}
\leq 8\pi + \frac{\varepsilon}{2}
,
\; \forall \; k\geq k_{\varepsilon}
\; \text{ and } \; 
\delta \in (0,r_{\varepsilon}).
\end{equation*}
On the other hand, by taking $k_{\varepsilon}$ larger if necessary,
from \eqref{4.14}, also we have that
\begin{equation*}
\int_{B_{\frac{L_{0}\varepsilon_{k}}{4}}}W_{k}e^{\xi_{k}}
=
\int_{B_{\frac{L_{0}}{4}}} R_{k}e^{v_{k}}
\geq
8\pi - \frac{\varepsilon}{2}
,
\; \forall \; k\geq k_{\varepsilon}
\end{equation*}
and we immediately derive \eqref{4.20}.

\

To establish \eqref{4.18}, we argue by contradiction and assume there exists $R_{1}>L_{0}$, such that
\begin{equation*}
\; \exists \; 
y_{k}\in B_{R_{1}}\setminus B_{\frac{L_{0}}{2}}
\; : \; 
v_{k}(y_{k})
+
2(\overline{\alpha}+1)\ln\vert y_{k} \vert 
\longrightarrow 
\infty
,
\; \text{ as } \; 
k\longrightarrow +\infty.
\end{equation*}
Define:
\begin{equation*}
\psi_{k}(x)
=
v_{k}(\vert y_{k} \vert x)
+
2(\overline{\alpha}+1)\ln\vert y_{k} \vert
, \; \text{ for } \; 
x\in \Omega
:=
\{  
\frac{1}{2}< \vert x \vert < 2
 \}
\end{equation*}
satisfying:
\begin{equation*}
-\Delta \psi_{k}(x)
=
(\Pi_{j=1}^{s_{1}}\vert x-\frac{z_{j,k}}{\vert y_{k} \vert} \vert^{2\alpha_{j}} )V_{k}(\vert y_{k} \vert x)
e^{\psi_{k}}
\; \text{ in } \; 
\Omega,
\end{equation*}
\begin{equation*}
\psi_{k}(\frac{y_{k}}{\vert y_{k} \vert })
\longrightarrow 
+ \infty
,
\; \text{ as } \; 
k\longrightarrow +\infty.
\end{equation*}
Setting:
\begin{equation*}
R_{1,k}(x)
=
\Pi_{j=1}^{s_{1}}\vert x-\frac{z_{j,k}}{\vert y_{k} \vert } \vert^{2 \alpha_{j}}
V_{k}(\vert y_{k} \vert x),  
\end{equation*}
in view of \eqref{4.13}, we check easily that there exist
$0<a_{1}\leq b_{1}$ and $A>0$ such that
\begin{equation*}
0<a_{1}\leq R_{1,k}(x)\leq b_{1}
\; \text{ and } \; 
\vert \nabla R_{1,k} \vert(x) \leq A
\;\;  
x\in \Omega
,\;
\int_{\Omega } 
R_{1,k}e^{\psi_{k}}
\leq 
C. 
\end{equation*}
Therefore, if along a subsequence, we assume that,
\begin{equation*}
\frac{y_{k}}{\vert y_{k} \vert }
\longrightarrow y_{0},
\; \text{ as } \; 
k\longrightarrow \infty,
\end{equation*}
then $\vert y_{0} \vert =1$, and so 
$y_{0}$ is a blow-up point of $\psi_{k}$ in
$\Omega$.
As above, from 
\cite{Brezis_Merle},\cite{Li_Shafrir},\cite{Bartolucci_Tarantello_Comm_Math_Phys} 
we have (along a subsequence)
\begin{equation*}
R_{1,k}e^{\psi_{k}}
\rightharpoonup  
8\pi \delta_{y_{0}},
\; \text{ weakly in the sense of measures in } \; 
\Omega.
\end{equation*}
and therefore, for $\delta>0$ sufficiently small, there holds
(along a subsequence):
\begin{equation*}
\int_{\{ \vert \, z-\vert y_{k} \vert  y_{0}  \, \vert < \delta \vert y_{k} \vert  \} } 
R_{k}(z)e^{v_{k}(z)}
dz
=
\int_{B_{\delta}(y_{0})} R_{1,k}e^{\psi_{k}}
\geq
4\pi
,
\; \text{ as } \;  
k\longrightarrow +\infty. 
\end{equation*}
Consequently, 
\begin{equation*}
\int_{\{ (1-\delta)\frac{L_{0}}{2} \leq \vert z \vert \leq (1+\delta)R_{1} \} }
R_{k}(z)e^{v_{k}(z)}dz  
\geq 4\pi
,
\; \text{ as } \; 
k\longrightarrow +\infty,
\end{equation*}
a contradiction to \eqref{4.20}.
\end{proof}
 
\ 
 
We can reformulate \eqref{4.18}  in terms of $\xi_{k}$ as follows:
\begin{equation}\label{formula_xi_k_of_x}
\xi_{k}(x)+2(\overline{\alpha}+1)\ln\vert x \vert 
+
\sum_{j=s_{1}+1}^{s}2 \alpha_{j} \ln \vert p_{j,k} \vert 
\leq C_{k}
,\;
\frac{L_{0}}{2}\varepsilon_{k}
\leq 
\vert x \vert 
\leq 
R \varepsilon_{k},
\end{equation}
where the summation term in \eqref{formula_xi_k_of_x} should be dropped in case
$s_{1}=s$.

\noindent
But since for large $k$ we have:
$$
0
\leq
W_{k}(x)
\leq
C_{R}\vert x \vert^{2 \overline{\alpha}}
\Pi_{j=s_{1}+1}^{s}
\vert p_{j,k} \vert^{2\alpha_{j}}
\; \text{ for } \; \frac{L_{0}}{2}\varepsilon_{k}
\leq 
\vert x \vert 
\leq 
R \varepsilon_{k},
$$
then from \eqref{formula_xi_k_of_x} we find:
\begin{equation}\label{4.22}
0
\leq 
W_{k}(x)e^{\xi_{k} }
\leq 
\frac{C_{R}}{\vert x \vert^{2}}
\; \text{ for } \; 
\frac{L_{0}}{2}\varepsilon_{k}
\leq 
\vert x \vert 
\leq
R\varepsilon_{k}
\end{equation}
with suitable $C_{R}>0$ and $k$ large.

\begin{lemma}
For $\varepsilon>0$ sufficiently small there exist 
$k_{\varepsilon}\in \N$ and $C_{\varepsilon}>0$ 
such that, 
\begin{equation}\label{4.23}
\xi_{k}(x)
\leq 
\min_{\partial B_{r}} \xi_{k}
+
(4+\varepsilon) \ln\frac{1}{\vert x \vert }
+
C_{\varepsilon}
,
\; \text{ for } \;
x \in B_{r}\setminus B_{L_{0}\varepsilon_{k}} 
\; \text{ and } \;  
k\geq k_{\varepsilon}.
\end{equation}
\end{lemma}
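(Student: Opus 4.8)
The plan is to reduce \eqref{4.23} to an upper bound on the logarithmic potential of $W_{k}e^{\xi_{k}}$, via the representation \eqref{2.9} and the superharmonicity of $\xi_{k}$, and then to estimate that potential by decomposing $B_{r}$ according to the sizes of $|y|$ and of $|x-y|$. Since $-\Delta\xi_{k}=W_{k}e^{\xi_{k}}\ge 0$, the minimum principle gives $\min_{\overline B_{r}}\xi_{k}=\min_{\partial B_{r}}\xi_{k}$ and $\xi_{k}\ge\min_{\partial B_{r}}\xi_{k}$ in $B_{r}$. Writing $\xi_{k}=\zeta_{k}+\min_{\partial B_{r}}\xi_{k}+H_{k}$, with $\zeta_{k}\ge 0$ solving $-\Delta\zeta_{k}=W_{k}e^{\xi_{k}}$, $\zeta_{k}|_{\partial B_{r}}=0$, and $H_{k}\ge 0$ harmonic with boundary datum $\xi_{k}-\min_{\partial B_{r}}\xi_{k}\in[0,C]$ by \eqref{2.1}, one has $0\le H_{k}\le C$ in $B_{r}$, and a Green's function estimate on $B_{r}$ (the kernel $G_{B_{r}}(x,\cdot)$ being bounded on $\{|y|\le\delta/4\}$ once $|x|\ge\delta$, while $W_{k}e^{\xi_{k}}$ is bounded on $\{|y|\ge\delta/4\}$ by \eqref{2.7}) gives $\zeta_{k}\le C_{\delta}$ on $\overline B_{r}\setminus B_{\delta}$. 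Hence \eqref{4.23} holds on $\overline B_{r}\setminus B_{\delta}$ for every fixed $\delta\in(0,r)$ (the term $(4+\varepsilon)\ln\tfrac1{|x|}\ge-C_{\delta}$ being absorbed into the constant), so only a range of the form $L_{0}\varepsilon_{k}\le|x|\le\delta_{\varepsilon}$ remains.

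Fix $\varepsilon>0$ small. By \eqref{3.0}, \eqref{4.14} and \eqref{4.20} choose $\delta_{\varepsilon}\in(0,r/2)$, $R_{\varepsilon}\ge L_{0}$ and $k_{\varepsilon}$ such that, for $k\ge k_{\varepsilon}$,
\[
\int_{B_{\delta_{\varepsilon}}}W_{k}e^{\xi_{k}}\le 8\pi+\varepsilon,\qquad
\int_{B_{\delta_{\varepsilon}}\setminus B_{L_{0}\varepsilon_{k}/2}}W_{k}e^{\xi_{k}}<\varepsilon ,
\]
the second bound coming from \eqref{4.20} on $\{|y|\ge R_{\varepsilon}\varepsilon_{k}\}$ and from $R_{k}e^{v_{k}}\rightharpoonup 8\pi\delta_{0}$ (cf. \eqref{4.14}) on $\{L_{0}\varepsilon_{k}/2\le|y|\le R_{\varepsilon}\varepsilon_{k}\}$. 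The key additional ingredient is the \emph{neck decay bound}
\[
W_{k}e^{\xi_{k}}(x)\le C_{\varepsilon}\,|x|^{-2}\qquad\text{for}\ \ \tfrac{L_{0}}{2}\varepsilon_{k}\le|x|\le\delta_{\varepsilon},\ \ k\ge k_{\varepsilon},
\]
which for $|x|$ comparable to $\varepsilon_{k}$ is exactly \eqref{4.22}, and for $|x|\in[R_{\varepsilon}\varepsilon_{k},\delta_{\varepsilon}]$ I would obtain by a rescaling/contradiction argument: if $|x_{k}|^{2}W_{k}e^{\xi_{k}}(x_{k})\to\infty$ with $x_{k}$ in this range, rescaling $\xi_{k}$ about $x_{k}$ at its natural scale produces an entire solution of a (possibly singular, if $x_{k}$ approaches some $p_{j,k}$) Liouville equation carrying mass $\ge 4\pi$, hence a blow-up point of $\xi_{k}$ distinct from the origin, against \eqref{2.7}; and if $x_{k}\to0$ the extra mass would force $m>4$, against \eqref{3.0}. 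Equivalently, one peels off the pole scales $|p_{1,k}|\le\dots\le|p_{s,k}|$ in turn and applies Proposition \ref{prop_3.1} to the successive rescaled sequences.

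Now take $L_{0}\varepsilon_{k}\le|x|\le\delta_{\varepsilon}$; since by the first paragraph \eqref{4.23} already holds for $|x|\ge\delta_{\varepsilon}/2$, we may assume $|x|\le\delta_{\varepsilon}/2$. By \eqref{2.9},
\[
\xi_{k}(x)-\min_{\partial B_{r}}\xi_{k}=\frac1{2\pi}\int_{B_{r}}\ln\frac1{|x-y|}\,W_{k}e^{\xi_{k}}\,dy+\psi_{k}(x),\qquad\|\psi_{k}\|_{\infty}\le C,
\]
and I split the integral over $\{|y|\le|x|/2\}$, over $\{|x|/2<|y|<\delta_{\varepsilon}\}\setminus B_{|x|/4}(x)$, over $B_{|x|/4}(x)$, and over $\{|y|\ge\delta_{\varepsilon}\}$. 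On the first set $|x-y|\ge|x|/2$, so its contribution is $\le(8\pi+\varepsilon)\big(\ln\tfrac1{|x|}+\ln2\big)$; on the last $|x-y|\ge\delta_{\varepsilon}/2$, so it is $\le C\ln\tfrac2{\delta_{\varepsilon}}=C_{\varepsilon}$; on the second $|x-y|\ge|x|/4$ while the mass is $<\varepsilon$, so it is $\le\varepsilon\big(\ln\tfrac1{|x|}+\ln4\big)$. Finally $B_{|x|/4}(x)\subset\{\tfrac{L_{0}}2\varepsilon_{k}\le|y|\le\delta_{\varepsilon}\}$ carries mass $<\varepsilon$, and writing $\ln\tfrac1{|x-y|}=\ln\tfrac1{|x|}+\ln\tfrac{|x|}{|x-y|}$ its contribution is at most $\varepsilon\ln\tfrac1{|x|}+C_{\varepsilon}|x|^{-2}\int_{B_{|x|/4}(x)}\ln\tfrac{|x|}{|x-y|}\,dy=\varepsilon\ln\tfrac1{|x|}+c_{0}C_{\varepsilon}$, using the neck decay bound, with $c_{0}$ an absolute constant. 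Summing and dividing by $2\pi$ gives $\xi_{k}(x)-\min_{\partial B_{r}}\xi_{k}\le\big(4+O(\varepsilon)\big)\ln\tfrac1{|x|}+C_{\varepsilon}$; since $|x|<r<1$, replacing $\varepsilon$ by a small fixed multiple of itself in the choices above yields \eqref{4.23}.

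The only non-routine step is the neck decay bound on the \emph{whole} range $\tfrac{L_{0}}2\varepsilon_{k}\le|x|\le\delta_{\varepsilon}$: past the scale $\varepsilon_{k}$ handled by \eqref{4.22}, one must rule out — which is what the rescaling argument does — a concentration of mass at the intermediate pole scales $|p_{j,k}|$, $j>s_{1}$, i.e., exactly the effect produced by the collapsing of the zeroes; everything else is bookkeeping of logarithmic potentials.
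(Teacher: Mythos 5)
Your reduction to the annulus $L_{0}\varepsilon_{k}\le\vert x\vert\le\delta_{\varepsilon}$, your choice of $\delta_{\varepsilon}$ via \eqref{4.24}--\eqref{4.20}, and your four-region decomposition of the potential in \eqref{2.9} are all sound, and \emph{granted} your ``neck decay bound'' $W_{k}e^{\xi_{k}}(x)\le C_{\varepsilon}\vert x\vert^{-2}$ on the whole range $\tfrac{L_{0}}{2}\varepsilon_{k}\le\vert x\vert\le\delta_{\varepsilon}$ the bookkeeping does deliver \eqref{4.23}. But that bound is precisely where the proof is not complete. What is actually available at this stage is \eqref{4.22}, which holds only on $\tfrac{L_{0}}{2}\varepsilon_{k}\le\vert x\vert\le R\varepsilon_{k}$ for each \emph{fixed} $R$, i.e.\ at the blow-up scale; extending it across the entire neck, in particular past the intermediate pole scales $\vert p_{j,k}\vert$, $j>s_{1}$, is a substantive statement that you assert via a one-sentence rescaling/contradiction sketch. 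To make that sketch rigorous you need a genuine selection argument (one cannot simply ``rescale about $x_{k}$ at its natural scale'' and obtain a convergent entire limit; one must first locate a nearby maximum of a weighted quantity such as $(\tfrac{\vert x_{k}\vert}{2}-\vert y-x_{k}\vert)^{2}W_{k}e^{\xi_{k}}$ to guarantee local upper bounds for the rescaled sequence), and you must handle the case where the selected points approach a collapsing cluster of the $p_{j,k}$, where the rescaled weight degenerates and the lower bound $\sigma\ge 4\pi$ has to be invoked in its singular form (Proposition \ref{prop_A} under \eqref{1.1}). The conclusion ($\ge 4\pi$ of mass trapped in $B_{2\delta_{\varepsilon}}\setminus B_{L_{0}\varepsilon_{k}/4}$, contradicting \eqref{4.20}) is correct, but the intermediate machinery is exactly the hard part of the lemma and is missing. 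Note also that once \eqref{4.23} is proved the paper \emph{deduces} a pointwise neck bound of this type, so your argument front-loads (a slightly weaker form of) the conclusion into the hypotheses.

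For contrast, the paper avoids any pointwise information in the interior of the neck. It first proves \eqref{4.23} only on the inner boundary $\vert x\vert=L_{0}\varepsilon_{k}$ -- there the near-diagonal part of the potential lives in an annulus of width $\sim\varepsilon_{k}$ where \eqref{4.22} \emph{is} available, and the rest is handled exactly as in your decomposition. Then, on $\Omega_{k,\delta}=\{L_{0}\varepsilon_{k}\le\vert x\vert\le\delta\}$, it considers $\phi_{k}=\xi_{k}-\min_{\partial B_{r}}\xi_{k}-(4+\varepsilon)\ln\tfrac{1}{\vert x\vert}$, which is bounded above on $\partial\Omega_{k,\delta}$ and satisfies $-\Delta\phi_{k}=W_{k}e^{\xi_{k}}$ with $\int_{\Omega_{k,\delta}}W_{k}e^{\xi_{k}}<4\pi\varepsilon$; the Brezis--Merle exponential-integrability lemma \cite{Brezis_Merle} (Lemma 5.2.1 of \cite{Tarantello_Book}) applied to a comparison function, together with the maximum principle, $W_{k}\le C\vert x\vert^{2\bar\alpha}$ with $\bar\alpha\ge2$, and elliptic estimates, then yields $\phi_{k}\le C$ throughout the neck using only this \emph{integral} smallness. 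If you want to keep your direct potential estimate, you should either prove the neck decay bound as a separate lemma via the selection process, or replace that step by the Brezis--Merle argument.
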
	
\begin{proof}
Firstly, let us fix $\delta \in (0,r)$ sufficiently small, so that
for large $k$ there holds:
\begin{equation}\label{4.24}
M_{k}(\delta)
:=
\frac{1}{2\pi}
\int_{B_{\delta}(0)}W_{k}e^{\xi_{k}}
<
4+\frac{\varepsilon}{2} 
\; \text{ and } \; 
\int_{B_{\delta}\setminus B_{L_{0}\varepsilon_{k} }}
W_{k}e^{\xi_{k}}
<
4\pi \varepsilon.
\end{equation}
Since \eqref{4.23} clearly holds in $B_{r} \setminus B_{\delta}$,
we are left to establish  it in the set
$$
\Omega_{k,\delta}
=
\{ x\; : \; 
R_{0}\varepsilon_{k}
\leq 
\vert x \vert 
\leq 
\delta 
\}. 
$$
In view of \eqref{4.24} we can establish the following:
\begin{equation}\label{4.25}
\; \text{\underline{\textbf{Claim }}\;:\quad   
The inequality \eqref{4.23} holds for } \;  x\; : \; \vert x \vert = L_{0}\varepsilon_{k}.  
\end{equation}
To obtain \eqref{4.25} we use \eqref{2.9} to write:
\begin{equation*}
\begin{split}
\xi_{k}(x)
= &
\min_{\partial B_{r}} \xi_{k}
+
M_{k}(\delta) \ln\frac{1}{\vert x \vert }
+
\frac{1}{2\pi}\int_{\{ \vert x \vert \leq \delta \} } 
\ln(\frac{\vert x \vert }{\vert x-y \vert })W_{k}(y)e^{\xi_{k}(y)} \\
& +
\int_{\{ \delta \leq \vert y \vert \leq r \} } 
\ln(\frac{1}{\vert x-y \vert })W_{k}(y)e^{\xi_{k}(y)}
+
O(1).
\end{split}
\end{equation*}
But, 
for  $\delta \leq \vert y \vert \leq r$ and $\vert x \vert = L_{0} \varepsilon_{k}$,
we see that,
\begin{equation*}
\vert \ln\vert x-y \vert \vert 
\leq 
\vert \ln\vert y \vert  \vert 
+
C,
\; \text{ and } \; 
W_{k}(y)e^{\xi_{k}(y)}
\leq C
\text{(by} \;
\eqref{4.22}),
\end{equation*}
and we deduce:
\begin{equation}\label{4.26}
\begin{split}
\xi_{k}(x)
\leq &
\min_{\partial B_{r}}\xi_{k}
+
M_{k}(\delta)\ln\frac{1}{\vert x \vert }
+
\frac{1}{2\pi}
\underset{\{ \vert y \vert \leq \delta \}}{\int} 
\ln(\frac{\vert x \vert }{\vert x-y \vert })W_{k}(y)e^{\xi_{k}(y)}
+
C.
\end{split}
\end{equation}
In order to estimate the integral term in \eqref{4.26}, 
we let 
\begin{equation*}
D(x) 
= 
\{ \vert y \vert \leq \frac{\vert x \vert }{2} \} 
\cup
\{ 
\vert x-y \vert \geq \frac{\vert x \vert }{2}
\; \text{ and } \; 
\vert y \vert \leq 2\vert x \vert  
\}
\end{equation*}
and observe that, if $y\in D(x)$, then
$\vert \ln\frac{\vert x \vert }{\vert x-y \vert } \vert \leq 4$ and consequently
\begin{equation*}
\vert 
\int_{D(x)} 
\ln(\frac{\vert x \vert }{\vert x-y \vert })
W_{k}(y)e^{\xi_{k}(y)} 
\vert 
\leq C.
\end{equation*}
Moreover, if 
$
2\vert x \vert \leq \vert y \vert \leq \delta
$,
then
$
\frac{\vert x \vert }{\vert x-y \vert }
\leq 
\frac{\vert x \vert }{\vert y \vert }
\frac{1}{\vert \frac{y}{\vert y \vert }-\frac{x}{\vert x \vert } \vert }
\leq 
1
$
and therefore
\begin{equation*}
\int_{\{ 2\vert x \vert \leq \vert y \vert \leq \delta \} } 
\ln(\frac{\vert x \vert }{\vert x-y \vert })W_{k}(y)e^{\xi_{k}(y)}
\leq 
0.
\end{equation*}
So we are left to estimate from above the given integral term on
$B_{\frac{\vert x \vert }{2}}(x)$.

Actually, we can easily check (as above) that, 
for any $\sigma \in (0,\frac{1}{2})$, we can find a suitable constant
$C_{\sigma}>0$ such that
$$
\vert \;
\int_
{
\{ \sigma \vert x \vert \leq \vert y-x \vert \leq \frac{\vert x \vert }{2}  \} 
}
\ln(\frac{\vert x \vert }{\vert x-y \vert })W_{k}(y)e^{\xi_{k}}(y)dy
\; \vert
\leq C_{\sigma}.
$$
Next notice that, if
$y\in B_{\sigma \vert x \vert}(x)$ then
$
(1-\sigma)L_{0}\varepsilon_{k}
\leq \vert y \vert 
\leq 
(1+\sigma)L_{0} \varepsilon_{k}
$,  
and so we can use \eqref{4.22} to estimate
\begin{equation*}
\begin{split}
\int_
{
B_{\sigma \vert x \vert }(x)
} 
\ln(\frac{\vert x \vert }{\vert x-y \vert })W_{k}(y)e^{\xi_{k}(y)}dy
\leq &
C
\int_
{\{ 
\vert \frac{x}{\vert x \vert }-\frac{y}{\vert y \vert } \vert  
\leq \sigma
\}} 
\ln (\frac{1}{\vert \frac{x}{\vert x \vert }-\frac{y}{\vert x \vert } \vert })
\frac{1}{\vert y \vert^{2}}dy \\
= &
C
\int_
{\{
\vert \frac{x}{\vert x \vert }-z \vert 
\leq \sigma  
\}} 
\ln(\frac{1}{\vert \frac{x}{\vert x \vert } - z \vert })
\frac{1}{\vert z \vert^{2}}dz
\leq C.
\end{split}
\end{equation*}
This information together with \eqref{4.24}, implies \eqref{4.25}.

\

To proceed further, we define:
\begin{equation*}
\phi_{k}(x)
=
\xi_{k}(x)
-
\min_{\partial B_{r}}\xi_{k}
-
(4+\varepsilon)\ln\frac{1}{\vert x \vert }
,\;
x\in \Omega_{k,\delta},
\end{equation*}
and in view of \eqref{4.25} we know that, 
$\phi_{k}$ is uniformly bounded from above
on $\partial \Omega_{k,\delta}$, and consequently it satisfies:
\begin{equation*}
\left\{
\begin{matrix*}
\;-\Delta \phi_{k}  = W_{k}e^{\xi_{k}}  &  \;\text{in}\;  &  \Omega_{k,\delta}  \\
\phi_{k}  \leq  C  & \;\text{in}\;  & \partial \Omega_{k,\delta}.  \\
\end{matrix*}
\right.
\end{equation*}
We are going to apply a well known lemma from \cite{Brezis_Merle}
(see e.g. Lemma 5.2.1 of \cite{Tarantello_Book}) to the function
$\tilde{\phi}_{k}$ satisfying:
\begin{equation}\label{3.42_new}
\left\{
\begin{matrix*}[l]
-\Delta \tilde{\phi}_{k}
=
\tilde{f}_{k}  & \; \text{ in  } \;  B_{\delta}& \\
\;\;\;\;\;\,\,\tilde{\phi}_{k} = C & \; \text{ in } \;  \partial B_{\delta }&\\
\end{matrix*}
\right.
\; \text{ with } \; 
\tilde{f}_{k} 
=
\left\{
\begin{matrix*}[l]
W_{k}e^{\xi_{k}} &\; \text{ in } \;  \Omega_{k,\delta} \\
0 &\; \text{ otherwise } 
\end{matrix*}
\right.
\end{equation}
Thus, as a consequence of the second inequality in \eqref{4.24} and 
Lemma 5.2.1 of \cite{Tarantello_Book}, for any $1\leq q <\frac{1}{\varepsilon}$
we find a constant $c_{\varepsilon}=c_{\varepsilon}(q)>0$ such that, 
$
\Vert e^{\tilde{\phi}_{k}} \Vert_{L^{q}(B_{\delta})}
\leq 
c_{\varepsilon}
$. 
Moreover, by the maximum principle, we know that 
\begin{equation}\label{phi_k_estimate}
\phi_{k} \leq \tilde{\phi}_{k} \; \text{ in } \; \Omega_{k,\delta}
\; \text{ and so } \; 
\Vert e^{\phi_{k}} \Vert_{L^{q}(\Omega_{k,\delta})}
\leq c_{\varepsilon}
\end{equation}
for $1\leq q \leq \frac{1}{\varepsilon}$. Since 
$
0\leq W_{k}(x)\leq C\vert x \vert^{2\bar\alpha}
$
in $\Omega_{k,\delta}$ with $\bar\alpha\geq 2$, 
we find that, 
$0
\leq
W_{k}e^{\xi_{k}}
\leq 
C\vert x \vert^{2(\bar \alpha -2)-\varepsilon}e^{\phi_{k}}
$
and, by virtue of \eqref{phi_k_estimate}, we conclude that,
\begin{equation*}
\Vert \tilde{f}_{k} \Vert_{L^{p}(B_{\delta})}
=
\Vert W_{k}e^{\xi_{k}} \Vert_{L^{p}(\Omega_{k,\delta})}
\leq 
C_{\varepsilon}=C_{\varepsilon}(p),
\; \text{ for } \;
1\leq p <\frac{2}{3\varepsilon}. 
\end{equation*}
Therefore, by \eqref{3.42_new}, we can use elliptic estimates to conclude that 
$\tilde{\phi}_{k}$ is uniformly bounded in $B_{\delta}$. In turn, from \eqref{phi_k_estimate}, we deduce:   
\begin{equation*}
\phi_{k}\leq C
\; \text{ in  } \; 
\Omega_{k,\delta}
\end{equation*} 
and \eqref{4.23} is established. 
\end{proof}
The estimate \eqref{4.23} implies in particular that, for any $\varepsilon>0$
sufficiently small, we have:
\begin{equation*}
0
\leq
W_{k}(x)e^{\xi_{k}(x)}
\leq
C\vert x \vert^{2(\bar \alpha - 2)- \varepsilon}
,
\; \text{ for } \; 
L_{0}\varepsilon_{k}\leq \vert x \vert \leq r
\; \text{ and } \;
k\geq k_{\varepsilon} 
\end{equation*}
with $\bar{\alpha} \geq 2$. As a consequence, we have: 
\begin{equation}\label{4.30}
\int_
{
L_{0}\varepsilon_{k}
\leq
\vert x \vert 
\leq 
r
}
\vert \ln\vert x \vert  \,\vert W_{k}(x)e^{\xi_{k}(x)}dx
\leq
C, 
\end{equation}
and we shall take advantage of \eqref{4.30} to refine the estimate
\eqref{4.23} as follows.

\begin{proposition}
We have:
\begin{equation}\label{4.31}
\xi_{k}(x)
=
\min_{\partial B_{r}}\xi_{k}
+
4\ln\frac{1}{\vert x \vert }
+
O(1),
\; \text{ for } \; 
L_{0}\varepsilon_{k}
\leq \vert x \vert 
\leq
r.
\end{equation}
\end{proposition}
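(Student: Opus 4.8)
The plan is to feed the two pointwise estimates just established, \eqref{4.23} and \eqref{4.30}, together with the concentration \eqref{4.14}, into the Green representation \eqref{2.9}, and reduce \eqref{4.31} to a single quantitative fact about the ``mass defect'' of the bubble. By \eqref{2.9}, for $L_0\varepsilon_k\le|x|\le r$ one has
\[
\xi_k(x)-\min_{\partial B_r}\xi_k=\frac1{2\pi}\int_{B_r}\ln\frac1{|x-y|}\,W_k(y)e^{\xi_k(y)}\,dy+O(1),
\]
so it suffices to estimate the potential $\Phi_k(x):=\frac1{2\pi}\int_{B_r}\ln\frac1{|x-y|}W_ke^{\xi_k}$. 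We may take $r$ small; the thin shell $L_0\varepsilon_k\le|x|\le2L_0\varepsilon_k$, where $|x|$ is comparable to $\varepsilon_k$, is handled directly by the bubble estimates \eqref{4.15}--\eqref{4.16}, and for the remaining range $2L_0\varepsilon_k\le|x|\le r$ we argue as follows.

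First I would show that the ``diffuse'' part of the potential, namely the contribution of $\{L_0\varepsilon_k\le|y|\le r\}$, is $O(1)$. On that annulus $W_ke^{\xi_k}\le C|y|^{-\varepsilon}$ by \eqref{4.23} (using $\overline{\alpha}\ge2$), and $\int_{\{L_0\varepsilon_k\le|y|\le r\}}|\ln|y||\,W_ke^{\xi_k}\le C$ by \eqref{4.30}. Splitting that annulus into $\{L_0\varepsilon_k\le|y|\le|x|/2\}$ (where $\ln\frac1{|x-y|}=\ln\frac1{|x|}+O(1)$ and the mass is $\le C|x|^{2-\varepsilon}$, hence the contribution is $O\!\bigl(|x|^{2-\varepsilon}\ln\frac1{|x|}\bigr)=O(1)$), the ``diagonal'' shell $\{|x|/2\le|y|\le2|x|\}$ (where $W_ke^{\xi_k}\le C|x|^{-\varepsilon}$ and $\int_{\{|x-y|\le3|x|\}}|\ln|x-y||\,dy\le C|x|^2(1+\ln\frac1{|x|})$), and $\{2|x|\le|y|\le r\}$ (where $\ln\frac1{|x-y|}=\ln\frac1{|y|}+O(1)$, so \eqref{4.30} and $\int_{B_r}W_ke^{\xi_k}\le C$ give $O(1)$), one concludes $\Phi_k(x)=\frac1{2\pi}\int_{B_{L_0\varepsilon_k}}\ln\frac1{|x-y|}W_ke^{\xi_k}+O(1)$. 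Since for $y\in B_{L_0\varepsilon_k}$ and $|x|\ge2L_0\varepsilon_k$ we have $\frac{|x|}2\le|x-y|\le\frac{3|x|}2$, this yields
\[
\Phi_k(x)=\Bigl(\frac1{2\pi}\int_{B_{L_0\varepsilon_k}}W_ke^{\xi_k}\Bigr)\ln\frac1{|x|}+O(1),\qquad 2L_0\varepsilon_k\le|x|\le r.
\]

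Now by \eqref{4.14}, $\frac1{2\pi}\int_{B_{L_0\varepsilon_k}}W_ke^{\xi_k}=\frac1{2\pi}\int_{B_{L_0}}R_ke^{v_k}\to4$; set $\gamma_k:=\frac1{2\pi}\int_{B_{L_0}}R_ke^{v_k}-4\to0$. The above gives $\xi_k(x)=\min_{\partial B_r}\xi_k+(4+\gamma_k)\ln\frac1{|x|}+O(1)$ on $2L_0\varepsilon_k\le|x|\le r$, so \eqref{4.31} is equivalent to the quantitative estimate $|\gamma_k|\ln\frac1{\varepsilon_k}\le C$. Moreover, evaluating this identity at $|x|=L_0\varepsilon_k$ and comparing with the bubble value $\xi_k(L_0\varepsilon_k\omega)=-\xi_k(0)-4\sum_{j=1}^s\alpha_j\ln|p_{j,k}|-4\ln\varepsilon_k+O(1)$ coming from \eqref{4.16} shows that $|\gamma_k|\ln\frac1{\varepsilon_k}\le C$ is tantamount to the estimate $\xi_k(0)+\min_{\partial B_r}\xi_k+2\ln W_k(0)=O(1)$ announced in the main theorem.

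The main obstacle is precisely this defect estimate: a priori one only knows $\gamma_k\to0$, with no rate, and the routine potential-theoretic splitting above cannot by itself improve the coefficient $4+\gamma_k$ to $4$. I would obtain $|\gamma_k|\ln\frac1{\varepsilon_k}\le C$ by sharpening the single-bubble analysis near the origin (in the spirit of \cite{Li_Shafrir},\cite{Bartolucci_Tarantello_Comm_Math_Phys}, which underlies Li's estimate in \cite{Li_Harnack}): in the least-mass regime $v_k$ is uniformly close to a standard Liouville bubble on all of $D_k$ with a controlled rate, which pins $\frac1{2\pi}\int_{B_{L_0}}R_ke^{v_k}$ to $4$ with an error $o\!\bigl(1/\ln\frac1{\varepsilon_k}\bigr)$. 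Equivalently, one may run a Pohozaev identity on $B_{\rho_k}$ for a suitable $\rho_k\to\infty$ with $\rho_k\varepsilon_k\to0$: there the zeros $z_{j,k}$ of $R_k$ stay away from the origin, so the weight-gradient terms integrate to $o(1)$ against the concentrating measure $R_ke^{v_k}$, and the identity becomes an effective relation isolating $\gamma_k$. With $|\gamma_k|\ln\frac1{\varepsilon_k}\le C$ in hand, the displays above together with the shell case give $\xi_k(x)=\min_{\partial B_r}\xi_k+4\ln\frac1{|x|}+O(1)$ for $L_0\varepsilon_k\le|x|\le r$, which is \eqref{4.31}.
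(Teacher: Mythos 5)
Your reduction is correct and is essentially the paper's: the Green representation \eqref{2.9}, the splitting of the potential using \eqref{4.23} and \eqref{4.30}, and the isolation of the core contribution $\bigl(\frac{1}{2\pi}\int_{B_{L_{0}\varepsilon_{k}}}W_{k}e^{\xi_{k}}\bigr)\ln\frac{1}{\vert x\vert}+O(1)$ all appear there (the paper performs the splitting only at $\vert x\vert=L_{0}\varepsilon_{k}$ and then propagates to the rest of the annulus by the maximum--principle argument of the preceding lemma, whereas you run the splitting for all $x$ in the outer region; both are fine). You have also correctly located the only real difficulty, namely the quantitative rate $\vert\gamma_{k}\vert\ln\frac{1}{\varepsilon_{k}}\leq C$ for the mass defect.

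But that rate is precisely where your argument stops being a proof. The ``controlled rate'' at which $v_{k}$ approaches the standard bubble is naturally measured against the bubble height $v_{k}(0)$: the sharp local estimates of Chen--Lin give $\vert\mu_{k}-4\vert\,v_{k}(0)=O(1)$ (this is \eqref{mu_k_asymptotic} in the paper), not an error $o\bigl(1/\ln\frac{1}{\varepsilon_{k}}\bigr)$. To convert the former into the latter one must prove $v_{k}(0)\geq c\ln\frac{1}{\varepsilon_{k}}$, and this is a genuine step, not a formality: the paper obtains it by matching the outer representation \eqref{4.33} at $\vert x\vert=L_{0}\varepsilon_{k}$ with the inner bubble value \eqref{4.16}, which yields \eqref{v_k_in_zero_refined},
\begin{equation*}
v_{k}(0)=-\min_{\partial B_{r}}\xi_{k}+\bigl(2(\bar\alpha+1)-\mu_{k}\bigr)\ln\frac{1}{\varepsilon_{k}}+\sum_{j=s_{1}+1}^{s}2\alpha_{j}\ln\frac{1}{\vert p_{j,k}\vert}+O(1),
\end{equation*}
and then uses that every term on the right is bounded below ($\min_{\partial B_{r}}\xi_{k}\leq C$ by \eqref{2.7}, $\vert p_{j,k}\vert\to 0$, and crucially $2(\bar\alpha+1)-\mu_{k}\to 2(\bar\alpha-1)\geq 2>0$ because $\bar\alpha\geq 2$ in the collapsing case) to arrive at \eqref{v_k_zero_estimate}. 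Your proposal asserts the conclusion of this chain without the link $v_{k}(0)\gtrsim\ln\frac{1}{\varepsilon_{k}}$, and your observation that the rate is ``tantamount to $\xi_{k}(0)+\min_{\partial B_{r}}\xi_{k}+2\ln W_{k}(0)=O(1)$'' only trades one unproven statement of the main theorem for another. The alternative Pohozaev route on $B_{\rho_{k}}$ is likewise not executed: the Pohozaev identity in the appendix yields only the limiting relation \eqref{2.33} with $o(1)$ errors, and extracting a rate from it would require quantitative control of $\nabla\xi_{k}$ on intermediate circles, i.e.\ essentially the estimate you are trying to prove. So the decisive step of the proposition is missing.
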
	
\begin{proof}
As before, we first establish \eqref{4.31}  for 
$\vert x \vert =L_{0}\varepsilon_{k}$.
To this purpose set, 
\begin{equation}\label{mu_k_limit}
\mu_{k}
=
\frac{1}{2\pi}
\underset{\vert y \vert \leq 2L_{0}\varepsilon_{k}}{\int} 
W_{k}(y)e^{\xi_{k}(y)}dy
=
\frac{1}{2\pi}
\underset{\vert z \vert \leq 2L_{0}}{\int}
R_{k}(z)e^{v_{k}(z)}dz
\longrightarrow 4,
\end{equation}
as $k\longrightarrow +\infty$.
Well known estimates established in \cite{Chen_Lin_1} 
(see also\cite{Tarantello_Book})
allow us to conclude that, 
\begin{equation}\label{mu_k_asymptotic}
\vert \mu_{k}-4 \vert v_{k}(0)=O(1).
\end{equation} 
Let 
$x=\varepsilon_{k}x^{\prime}$
with
$\vert x^{\prime } \vert =L_{0}$ and write
\begin{equation}\label{4.32}
\begin{split}
\xi_{k}(x)
= &
\min_{\partial B_{r}} \xi_{k}
+
\mu_{k}\ln\frac{1}{\vert x \vert }
+
\frac{1}{2\pi}
\int_
{
\{\vert y \vert \leq 2L_{0}\varepsilon_{k}\}
} 
\ln(\frac{\vert x \vert }{\vert x-y \vert })W_{k}(y)e^{\xi_{k}(y)}dy \\
& +
\frac{1}{2\pi}
\int_
{
\{ 2L_{0}\varepsilon_{k} \leq \vert y \vert <r  \} 
} 
\ln(\frac{1}{\vert x-y \vert })W_{k}(y)e^{\xi_{k}(y)}dy
+
O(1)
\\ 
= &
\min_{\partial B_{r}}\xi_{k}
+
\mu_{k}\ln \frac{1}{\varepsilon_{k}}
+
\frac{1}{2\pi}
\int_
{
\{\vert y \vert \leq 2L_{0}\}
} 
\ln(\frac{\vert x^{\prime } \vert }{\vert x^{\prime }-y \vert })
R_{k}(y)e^{v_{k}(y)}dy \\
 & +
\frac{1}{2\pi}
\int_
{
\{ 2L_{0}\varepsilon_{k} \leq \vert y \vert <r  \} 
} 
\ln(\frac{1}{\vert y \vert })W_{k}(y)e^{\xi_{k}(y)}dy
+
O(1).
\end{split}
\end{equation}
By virtue of \eqref{4.14} and \eqref{4.15} we find:
$\int_
{
\{\vert y \vert \leq L_{0}\}
} 
\ln(\frac{\vert x^{\prime } \vert }{\vert x^{\prime }-y \vert })R_{k}(y)e^{v_{k}(y)}dy
\longrightarrow 0
$,
 as
$
k\longrightarrow +\infty,
$
while \eqref{4.30} implies that the last integral in \eqref{4.32} is uniformly bounded. 
In conclusion, we have obtained:
\begin{equation}\label{4.33}
\xi_{k}(x)
=
\min_{\partial B_{r}}\xi_{k}
+
\mu_{k}\ln \frac{1}{\varepsilon_{k}}+O(1),
\; \text{ for } \; 
\vert x \vert = \varepsilon_{k}L_{0}.
\end{equation}
As a consequence, for $\vert y \vert =L_{0}$, we have:
\begin{equation*}
\begin{split}
v_{k}(y)
= &\;
\xi_{k}(\varepsilon_{k}y)
+
2\ln \varepsilon_{k}
+
\sum_{j=1}^{s}2 \alpha_{j} \ln\vert p_{j,k} \vert \\
= & \;
\min_{\partial B_{r}} \xi_{k}
+
(2\bar{\alpha}-\mu_{k}) \ln \varepsilon_{k}
+
\sum_{j=s_{1}+1}^{s}2\alpha_{j}\ln\vert p_{j,k} \vert
+
O(1) 
\end{split}
\end{equation*}
At this point, we can use \eqref{4.16} to deduce the following crucial information:
\begin{equation}\label{v_k_in_zero_refined}
v_{k}(0)
=
-\min_{\partial B_{r}}\xi_{k}
+
(2(\bar \alpha +1)-\mu_{k}) \ln\frac{1}{\varepsilon_{k}}
+
\sum_{j=s_{1}+1}^{s}2 \alpha_{j} \ln \frac{1}{\vert p_{j,k} \vert }
+
O(1).
\end{equation}
Since $2(\bar \alpha + 1)-\mu_{k}\longrightarrow 2(\bar \alpha -1) \geq 2$
, as 
$k\longrightarrow +\infty$, from 
\eqref{mu_k_asymptotic} and \eqref{v_k_in_zero_refined} we obtain that,
\begin{equation}\label{v_k_zero_estimate}
0 
<
\ln\frac{1}{\varepsilon_{k}}
\leq C v_{k}(0)
\; \text{ and } \;
\vert \mu_{k}-4 \vert\ln\frac{1}{\varepsilon_{k}}\leq C.  
\end{equation}
for suitable $C>0$. Consequently, by \eqref{4.33},
$$
\vert 
\xi_{k}(x)-4\ln\frac{1}{\varepsilon_{k}}- \min_{\partial B_{r}} \xi_{k}
\vert 
\leq C,
\; \text{ for  } \; 
\vert x \vert =L_{0}\varepsilon_{k}  
$$
and \eqref{4.31} is established for $\vert x \vert = L_{0}\varepsilon_{k}$.

Clearly, by \eqref{2.1}, \eqref{4.31} also holds for $\vert x \vert = r$, 
and we can argue exactly as above for the function 
$
\phi_{k}
=
\xi_{k}(x)-\min_{\partial B_{r}}\xi_{k}
+
4\ln\vert x \vert 
$, 
in order to show that 
$
\Vert \phi_{k} \Vert_
{
L^{\infty}\{ R_{0}\varepsilon_{k}\leq \vert x \vert <r \} 
}
\leq C
$,
and conclude that \eqref{4.31} holds. 
\end{proof}
 
The estimate \eqref{4.31} allows us to show the following additional estimates. 
\begin{proposition}\label{prop_4.4}
Under the above assumptions there holds 
\begin{equation}\label{4.34_former_4.32}
v_{k}(0)
=
-\min_{\partial B_{r}}\xi_{k}
+
2\ln \varepsilon_{k}
-
\sum_{j=1}^{s} 2 \alpha_{j} \ln \vert p_{j,k} \vert 
+
O(1)
\end{equation}
\begin{equation}\label{4.35_former_4.33}
\vert 
v_{k}(y)+v_{k}(0)+4\ln\vert y \vert 
\vert 
\leq 
C
\; \text{ for } \; 
L_{0} \leq \vert y \vert \leq \frac{r}{\varepsilon_{k}}
\end{equation}
\begin{equation}\label{4.36_former_4.34} 
\xi_{k}(0)
=
-\min_{\partial B_{r}}\xi_{k}
-
2 \sum_{j=1}^{s} 2 \alpha_{j} \ln\vert p_{j,k} \vert 
+
O(1)
\end{equation}
\begin{equation}\label{4.37_former_4.35}
\begin{split}
\int_{B_{r}} \vert \nabla \xi_{k} \vert^{2}
= & \; 
16\pi(v_{k}(0)+2\ln\frac{1}{\varepsilon_{k}})
+
O(1)
\\
= & \;
-16 \pi 
(
\min_{\partial B_{r}}\xi_{k}
+
\sum_{j=1}^{s} 2 \alpha_{j} \ln \vert p_{j,k} \vert 
)
+
O(1) \\
= & \;
16 \pi (\xi_{k}(0)+2 \sum_{j=1}^{s}\alpha_{j}\ln \vert p_{j,k} \vert )
+
O(1)
\end{split}
\end{equation}
\end{proposition}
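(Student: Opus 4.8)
\emph{Proof plan.} The first three identities are a rearrangement of facts already established, so I would dispose of them quickly. For \eqref{4.34_former_4.32} the plan is to start from \eqref{v_k_in_zero_refined}: by \eqref{v_k_zero_estimate} one has $\mu_k\ln\frac1{\varepsilon_k}=4\ln\frac1{\varepsilon_k}+O(1)$, so the factor $2(\bar\alpha+1)-\mu_k$ there may be replaced by $2\bar\alpha-2$ up to an $O(1)$ error; moreover, since $\varepsilon_k=|p_{s_1,k}|$ while $z_{j,k}=p_{j,k}/\varepsilon_k\to z_j\neq0$ for $j=1,\dots,s_1$ (see \eqref{3.6}), we have $\ln|p_{j,k}|=\ln\varepsilon_k+O(1)$ for those indices, which converts the partial sum $\sum_{j=s_1+1}^{s}$ appearing in \eqref{v_k_in_zero_refined} into the full sum $\sum_{j=1}^{s}$. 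This yields \eqref{4.34_former_4.32}. Subtracting from it the defining relation $v_k(0)=\xi_k(0)+2\ln\varepsilon_k+\sum_{j=1}^{s}2\alpha_j\ln|p_{j,k}|$ gives \eqref{4.36_former_4.34}. Finally, for \eqref{4.35_former_4.33} I would evaluate \eqref{4.31} at $x=\varepsilon_k y$ with $L_0\leq|y|\leq r/\varepsilon_k$ (so $L_0\varepsilon_k\leq|x|\leq r$), add $2\ln\varepsilon_k+\sum_{j=1}^{s}2\alpha_j\ln|p_{j,k}|$ to pass to $v_k(y)$, and eliminate $\min_{\partial B_r}\xi_k$ and the logarithmic terms by \eqref{4.34_former_4.32}; what survives is exactly $v_k(y)+v_k(0)=-4\ln|y|+O(1)$.

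For the energy identity \eqref{4.37_former_4.35} the plan is to integrate by parts on $B_r$, so that $\int_{B_r}|\nabla\xi_k|^2=\int_{B_r}\xi_k W_ke^{\xi_k}+\int_{\partial B_r}\xi_k\,\partial_\nu\xi_k$, and to split the volume integral at $|x|=L_0\varepsilon_k$. On $\partial B_r$ I use \eqref{2.1} to write $\xi_k=\min_{\partial B_r}\xi_k+O(1)$, \eqref{2.9}--\eqref{2.10} to bound $\partial_\nu\xi_k$, and $\int_{\partial B_r}\partial_\nu\xi_k=-\int_{B_r}W_ke^{\xi_k}$, so the boundary term equals $-\min_{\partial B_r}\xi_k\int_{B_r}W_ke^{\xi_k}+O(1)$. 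On $B_r\setminus B_{L_0\varepsilon_k}$ I insert \eqref{4.31}; the contribution of the $4\ln\frac1{|x|}$ term is $O(1)$ by \eqref{4.30}, leaving $\min_{\partial B_r}\xi_k\big(\int_{B_r}W_ke^{\xi_k}-\int_{B_{L_0\varepsilon_k}}W_ke^{\xi_k}\big)+O(1)$. The point is that the possibly inexact quantity $\int_{B_r}W_ke^{\xi_k}$ — which multiplies the divergent factor $\min_{\partial B_r}\xi_k$ — cancels against the boundary contribution, leaving $-\min_{\partial B_r}\xi_k\int_{B_{L_0\varepsilon_k}}W_ke^{\xi_k}+O(1)$.

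On $B_{L_0\varepsilon_k}$ I rescale by \eqref{4.1}: $\int_{B_{L_0\varepsilon_k}}\xi_k W_ke^{\xi_k}=\int_{B_{L_0}}\big(v_k-2\ln\varepsilon_k-\sum_{j=1}^{s}2\alpha_j\ln|p_{j,k}|\big)R_ke^{v_k}$, and use \eqref{4.15} to see that $\int_{B_{L_0}}(v_k-v_k(0))R_ke^{v_k}=O(1)$, hence $\int_{B_{L_0}}v_kR_ke^{v_k}=v_k(0)\int_{B_{L_0\varepsilon_k}}W_ke^{\xi_k}+O(1)$. Collecting the three contributions, substituting \eqref{4.34_former_4.32} and \eqref{4.36_former_4.34} to express the combinations of $\min_{\partial B_r}\xi_k$, $\ln\varepsilon_k$ and $\sum 2\alpha_j\ln|p_{j,k}|$ through $v_k(0)$ alone, and finally replacing $\int_{B_{L_0\varepsilon_k}}W_ke^{\xi_k}$ by $8\pi$ — legitimate because, as for $\mu_k$ in \eqref{mu_k_asymptotic}, its deviation from $8\pi$ stays $O(1)$ after multiplication by $v_k(0)$, hence also after multiplication by $\ln\frac1{\varepsilon_k}\leq C v_k(0)$ (see \eqref{v_k_zero_estimate}) — I expect to reach $\int_{B_r}|\nabla\xi_k|^2=16\pi v_k(0)+32\pi\ln\frac1{\varepsilon_k}+O(1)$, the first line of \eqref{4.37_former_4.35}. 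The other two lines then follow from \eqref{4.34_former_4.32} and \eqref{4.36_former_4.34}.

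The hard part is this concluding bookkeeping rather than any single estimate: each region produces terms comparable to the divergent quantities $v_k(0)$ and $\ln\frac1{\varepsilon_k}$, and the final $O(1)$ emerges only through the cancellation of the total mass $\int_{B_r}W_ke^{\xi_k}$ between the volume and boundary terms together with the fast decay of the mass deviations $\int_{B_{L_0\varepsilon_k}}W_ke^{\xi_k}-8\pi$ against those divergences; the precise inputs that make it work are \eqref{4.31}, \eqref{4.30}, \eqref{4.15}, \eqref{mu_k_asymptotic} and \eqref{v_k_zero_estimate}, and one must establish \eqref{4.34_former_4.32}--\eqref{4.36_former_4.34} before attacking \eqref{4.37_former_4.35}.
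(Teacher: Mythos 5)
Your proposal is correct and follows essentially the same route as the paper: the first three identities are obtained by combining \eqref{4.31}, \eqref{4.16}/\eqref{v_k_in_zero_refined} and \eqref{v_k_zero_estimate}, and the energy identity comes from testing the equation against the solution, splitting at $\vert x \vert = L_{0}\varepsilon_{k}$, and using \eqref{4.30}, \eqref{4.15} and the decay of $\int_{B_{L_{0}\varepsilon_{k}}}W_{k}e^{\xi_{k}}-8\pi$ against $v_{k}(0)$. The only cosmetic difference is that the paper multiplies by $\xi_{k}-\min_{\partial B_{r}}\xi_{k}$, which makes the boundary term $O(1)$ outright, whereas you test against $\xi_{k}$ and recover the same outcome through the cancellation of $\min_{\partial B_{r}}\xi_{k}\int_{B_{r}}W_{k}e^{\xi_{k}}$ between the boundary and outer-annulus contributions — a step you identify and handle correctly.
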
	
\begin{proof}
From \eqref{4.31} we derive that,
\begin{equation}\label{4.38_former_4.36} 
\begin{split}
v_{k}(y)
=\; &
\min_{\partial B_{r}}\xi_{k}
-
2\ln \varepsilon_{k}
\\
& \;
+
\sum_{j=1}^{s} 2 \alpha_{j} \ln\vert p_{j,k} \vert 
+
4\ln\frac{1}{\vert y \vert }
+
O(1),
\; \text{ for } \; 
L_{0}\leq \vert y \vert \leq \frac{r}{\varepsilon_{k}}.
\end{split}
\end{equation}
Hence, by using \eqref{4.38_former_4.36} with 
$\vert y \vert = L_{0}\varepsilon_{k}$
together with  \eqref{4.16}, we find:
\begin{equation}\label{4.39_former_4.37}
-v_{k}(0)
=
\min_{\partial B_{r}}\xi_{k}
-
2\ln \varepsilon_{k}
+
\sum_{j=1}^{s} 2 \alpha_{j} \ln\vert p_{j,k} \vert 
+
O(1)
\end{equation}
and \eqref{4.34_former_4.32} is established. 
At this point,  by inserting \eqref{4.39_former_4.37} into \eqref{4.38_former_4.36},
we readily obtain \eqref{4.35_former_4.33}.  
Also we obtain \eqref{4.36_former_4.34} from \eqref{4.34_former_4.32},
 once we recall that,
$
v_{k}(0)
=
\xi_{k}(0)
+
2\ln \varepsilon_{k}
+
\sum_{j=1}^{s} 2 \alpha_{j} \ln\vert p_{j,k} \vert.
$
Finally, to obtain \eqref{4.37_former_4.35}, 
we multiply both sides of the equation \eqref{2.0} by:
$
\xi_{k}-\min_{\partial B_{r}}\xi_{k}\geq 0,
$ 
and then integrate over $B_{r}$. We find
\begin{equation*}
\begin{split}
\int_{B_{r}} \vert \nabla \xi_{k} \vert^{2}
& -
\int_{\partial B_{r}} \frac{\partial \xi_{k}}{\partial \nu}
(\xi_{k}-\min_{\partial B_{r}}\xi_{k})
= 
\int_{B_{r}} W_{k}e^{\xi_{k}}(\xi_{k}-\min_{\partial B_{r}}\xi_{k}) \\
= &
\int_{\{ \vert x \vert \leq L_{0}\varepsilon_{k} \} } 
W_{k}e^{\xi_{k}}(\xi_{k}-\min_{\partial B_{r}}\xi_{k})
+
\int_
{
\{ L_{0}\varepsilon_{k}
\leq \vert x \vert 
\leq r
 \} 
 } 
W_{k}e^{\xi_{k}}(\xi_{k}-\min_{\partial B_{r}}\xi_{k}).
\end{split}
\end{equation*}
Therefore, by means of \eqref{4.31}, we obtain:
\begin{equation*}
\int_
{
\{ L_{0}\varepsilon_{k}
\leq \vert x \vert 
\leq 
r
 \} } 
W_{k}e^{\xi_{k}}(\xi_{k}-\min_{\partial B_{r}}\xi_{k})
\leq
C \int_{B_{r}} \vert x \vert^{2(\bar \alpha - 2)}\ln\frac{1}{\vert x \vert }dx
\leq 
C
\end{equation*}
(recall that $\bar{\alpha} \geq 2$). 
Since $\xi_{k}$ admits the origin as its only blow-up point on $B_{r}$, 
then it is uniformly bounded in $C^{1}$-norm on $\partial B_{r}$,
and in view  \eqref{2.1} we deduce:
$
\vert 
\int_{\partial B_{r}} 
\frac{\partial \xi_{k}}{\partial \nu}
(\xi_{k} - \min_{\partial B_{r}}\xi_{k})
\vert 
\leq C.
$
Finally, we compute:
\begin{equation}\label{big_equation}
\begin{split}
\int_
{
\{ \vert x \vert \leq L_{0}\varepsilon_{k} \}  
} & 
W_{k}(x)
e^{\xi_{k}(x)}
(\xi_{k}(x)-\min_{\partial B_{r}}\xi_{k})dx \\
= &
\underset{\{ \vert x \vert  \leq L_{0}\varepsilon_{k} \} }{\int}
W_{k}(x)(\xi_{k}(x)-\xi_{k}(0))dx
\\
&
+
(\xi_{k}(0)-\min_{\partial B_{r}}\xi_{k})
\,
\int_{\{ \vert x \vert \leq L_{0}\varepsilon_{k}  \} }W_{k}(x)e^{\xi_{k}(x)}dx 
\\
= &
\underset{\{ \vert y \vert \leq L_{0} \} }{\int}
R_{k}(y)e^{v_{k}(y)}
(v_{k}(y)-v_{k}(0)) dy
\\
&
+
(\xi_{k}(0)-\min_{\partial B_{r}}\xi_{k})
\underset{\{ \vert y \vert \leq L_{0} \} }{\int} R_{k}(y)e^{v_{k}(y)}dy \\
= &
\underset{\{ \vert y \vert \leq L_{0} \} }{\int}
R_{k}(y)e^{v_{k}(y)}(v_{k}(y)-v_{k}(0))dy  + \\
& 
-
2
( 
\min_{\partial B_{r}}\xi_{k}
+
\sum_{j=1}^{s} 2 \alpha_{j} \ln\vert p_{j,k} \vert 
)
\underset{\{ \vert y \vert \leq L_{0} \} }{\int}  R_{k}(y)e^{v_{k}(y)}dy
)
+
O(1)
.
\end{split}
\end{equation}
Since by 
\eqref{mu_k_limit} and \eqref{v_k_zero_estimate} we have:
$$
( v_{k}(0)+2\ln\frac{1}{\varepsilon_{k}})
\vert \int_{\{ \vert y \vert \leq L_{0} \} } R_{k}(y)e^{v_{k}(y)}dy-8\pi \vert
\leq C, 
$$
then, by \eqref{4.34_former_4.32}, 
we can estimate the last term in \eqref{big_equation} as follows: 
\begin{equation*}
\begin{split}
-
2
(
\min_{\partial B_{r}}\xi_{k}
& +
\sum_{j=1}^{s} 2 \alpha_{j} \ln\vert p_{j,k} \vert
) 
\int_{\{ \vert y \vert \leq L_{0} \} }  R_{k}(y)e^{v_{k}(y)}dy
) 
\\
= \; &
2(v_{k}(0)+2\ln\frac{1}{\varepsilon_{k}})\int_{\{ \vert y \vert \leq L_{0}  \} }R_{k}(y)e^{v_{k}(y)}dy
\\
= \; &
(v_{k}(0)+2\ln\frac{1}{\varepsilon_{k}})16\pi \\
& +
2(v_{k}(0)+2\ln\frac{1}{\varepsilon_{k}})
(\int_{\{ \vert y \vert \leq L_{0} \} } R_{k}(y)e^{v_{k}(y)}dy-8\pi)
\\
= \; &
(v_{k}(0)+2\ln\frac{1}{\varepsilon_{k}})16\pi
+
O(1).
\end{split}
\end{equation*}
Finally, we use \eqref{4.15} to estimate 
\begin{equation*}
\begin{split} 
\vert  
\int_{\{ \vert x \vert \leq L_{0} \} } 
R_{k}(x)
&
e^{v_{k}(x)}(v_{k}(x)-v_{k}(0))
\vert 
dx \\
\leq  &
C \int_{\{ \vert x \vert \leq L_{0} \} } 
\frac{e^{v_{k}(0)}}{(1+\frac{e^{v_{k}(0)}}{8}R_{k}(0)\vert x \vert^{2})^{2}}
\ln
(
1+\frac{e^{v_{k}(0)}}{8}R_{k}(0)\vert x \vert^{2}
)
dx
\\
\leq  &
C \int_{\R^{2}} \frac{dy}{(1+\vert y \vert^{2})^{2}}\ln(1+\vert y \vert^{2})
\leq 
C
\end{split}
\end{equation*}
and so \eqref{4.37_former_4.35} follows.
\end{proof}

\

Since $R_{k}(0)=h_{k}(0)$, 
we can combine \eqref{4.15} and \eqref{4.37_former_4.35}
to find
\begin{equation}\label{4.40_former_4.38}
\vert 
v_{k}(y)
- 
\ln 
\frac
{e^{v_{k}(0)}}
{(1+\frac{e^{v_{k}(0)}}{8}h_{k}(0)\vert y \vert^{2})^{2}}
\vert 
\leq 
C
,
\; \text{ for } \; 
\vert y \vert \leq \frac{r}{\varepsilon_{k}}.
\end{equation}
and obtain in particular that: \; 
$
\int_{\{ \vert y \vert \leq \frac{r}{\varepsilon_{k}} \} }
e^{v_{k}(y)}dy
\leq C 
$.

\

In view of Theorem \ref{prop_3.2},  we know that 
$p_{j,k}\neq 0 \; \forall \; k\in \N$, and so 
$W_{k}(0)=\Pi_{j=1}^{s}\vert p_{j,k} \vert^{2\alpha_{j}}h_{k}(0) >0 $. 
Hence we can formulate \eqref{4.40_former_4.38} in terms of $\xi_{k}$ as follows:

\begin{corollary}
Under the above assumptions we have,
$0<W_{k}(0)\longrightarrow 0$, as $k\longrightarrow +\infty$, and
\begin{equation}\label{xi_k_uniform_estimate}
\xi_{k}(x)
=
\ln (\frac{e^{\xi_{k}(0)}}{(1+\frac{e^{\xi_{k}(0)}}{8}W_{k}(0)\vert x \vert^{2} )^{2}})
+
O(1)
\; \text{ in } \; 
B_{r}.
\end{equation}
\end{corollary}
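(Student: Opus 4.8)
The plan is to derive \eqref{xi_k_uniform_estimate} directly from the rescaled estimate \eqref{4.40_former_4.38} by undoing the change of variables that defines $v_{k}$; no further analytic ingredient is required, only bookkeeping of the rescaling constants.

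For the first assertion, Theorem~\ref{prop_3.2} gives $p_{j,k}\neq 0$ for all $j$ and all large $k$, hence $W_{k}(0)=\bigl(\Pi_{j=1}^{s}\vert p_{j,k}\vert^{2\alpha_{j}}\bigr)h_{k}(0)>0$ because $h_{k}\geq a>0$ by \eqref{1.14}. Since $\alpha=\sum_{j=1}^{s}\alpha_{j}\geq 2$ and $\vert p_{j,k}\vert\leq\tau_{k}\to 0$, one has $\Pi_{j=1}^{s}\vert p_{j,k}\vert^{2\alpha_{j}}\leq\tau_{k}^{2\alpha}\to 0$, and since $h_{k}(0)\leq b$ this yields $W_{k}(0)\to 0^{+}$.

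For the pointwise profile, fix $x\in B_{r}$ and write $x=\varepsilon_{k}y$ with $\varepsilon_{k}=\vert p_{s_{1},k}\vert$ as in \eqref{4.1}, so that $\vert y\vert\leq r/\varepsilon_{k}$. From the definition of $v_{k}$ one has $\xi_{k}(x)-\xi_{k}(0)=v_{k}(y)-v_{k}(0)$, hence, inserting \eqref{4.40_former_4.38} (recall $R_{k}(0)=h_{k}(0)$), whose error constant is uniform on $\vert y\vert\leq r/\varepsilon_{k}$,
\[
\xi_{k}(x)=\xi_{k}(0)-2\ln\Bigl(1+\tfrac{e^{v_{k}(0)}}{8}\,h_{k}(0)\,\vert y\vert^{2}\Bigr)+O(1)
\]
uniformly in $B_{r}$. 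It remains to identify the argument of the logarithm: from $\xi_{k}(0)=v_{k}(0)-2\ln\varepsilon_{k}-\sum_{j=1}^{s}2\alpha_{j}\ln\vert p_{j,k}\vert$ one gets $e^{\xi_{k}(0)}W_{k}(0)=e^{v_{k}(0)}\varepsilon_{k}^{-2}h_{k}(0)$, so that $\tfrac{e^{v_{k}(0)}}{8}h_{k}(0)\vert y\vert^{2}=\tfrac{e^{\xi_{k}(0)}}{8}W_{k}(0)\,\varepsilon_{k}^{2}\vert y\vert^{2}=\tfrac{e^{\xi_{k}(0)}}{8}W_{k}(0)\vert x\vert^{2}$, which is exactly \eqref{xi_k_uniform_estimate}.

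I do not expect a real obstacle here; the only point that needs care is to carry the $\varepsilon_{k}$- and $p_{j,k}$-dependent constants through the rescaling so that they cancel \emph{exactly}, which is what collapses the argument of the logarithm to the clean quantity $\tfrac{e^{\xi_{k}(0)}}{8}W_{k}(0)\vert x\vert^{2}$ rather than merely matching it up to $O(1)$. One should also note that \eqref{4.40_former_4.38} is valid up to $\vert y\vert=r/\varepsilon_{k}$ (it was obtained from \eqref{4.15} and \eqref{4.37_former_4.35}), so the estimate indeed covers all of $B_{r}$; alternatively one may split $B_{r}=B_{L_{0}\varepsilon_{k}}\cup(B_{r}\setminus B_{L_{0}\varepsilon_{k}})$, use \eqref{4.40_former_4.38} on the inner ball and \eqref{4.31} on the outer annulus, and patch the two pieces via the algebraic identity $-\xi_{k}(0)-2\ln W_{k}(0)=\min_{\partial B_{r}}\xi_{k}+O(1)$ provided by \eqref{4.36_former_4.34}.
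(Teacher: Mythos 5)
Your proposal is correct and follows exactly the route the paper intends: the corollary is obtained by undoing the rescaling $x=\varepsilon_{k}y$ in \eqref{4.40_former_4.38}, using $\xi_{k}(x)-\xi_{k}(0)=v_{k}(y)-v_{k}(0)$ and the identity $e^{v_{k}(0)}h_{k}(0)\vert y\vert^{2}=e^{\xi_{k}(0)}W_{k}(0)\vert x\vert^{2}$, together with $p_{j,k}\neq 0$ from Theorem \ref{prop_3.2} for the positivity of $W_{k}(0)$. The bookkeeping you carry out (and the observation that \eqref{4.40_former_4.38} already covers $\vert y\vert\leq r/\varepsilon_{k}$, i.e.\ all of $B_{r}$) is precisely what the paper leaves implicit in the phrase ``we can formulate \eqref{4.40_former_4.38} in terms of $\xi_{k}$.''
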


\

\

It is interesting to compare  \eqref{xi_k_uniform_estimate} with 
the analogous one "bubble" estimate established in 
\cite{Chen_Lin_1} and \cite{Li_Harnack}
(see e.g. Theorem 0.3 in \cite{Li_Harnack}) concerning the profile of 
blow-up solutions of
\eqref{2.0}-\eqref{2.2}, when $W_{k}\xrightarrow{k\to +\infty} W$ uniformly in $B_{r}$
and $W(0)>0$, 
in which case \eqref{3.0} is automatically satisfied 
(see (i) of Theorem \ref{theorem_2_new}) and no collapsing issues arise.
Indeed,
our pointwise estimate in  \eqref{xi_k_uniform_estimate} 
is the striking exact analogue 
of the one provided in Theorem 0.3 of \cite{Li_Harnack},
carried over to the case where  $W$ satisfies 
\eqref{W_zero} and in particular $W(0)=0$. 

\

Furthermore, for the sequence:
$$
u_{k}(x)
= 
\xi_{k}(x)+\sum_{j=1}^{s} 2 \alpha_{j}
\log\vert x-p_{j,k} \vert, 
$$
satisfying:
\begin{equation}\label{singular_equation_for_u_k}
-\Delta u_{k}
=
h_{k}e^{u_{k}}
- 
4\pi \sum_{j=1}^{s}\alpha_{j}\delta_{p_{j,k}}
\; \text{ in } \; 
B_{r},
\end{equation}
we realize that, the estimate \eqref{4.36_former_4.34}  
stated for $\xi_{k}$ in Proposition \ref{prop_4.4}
reduces, in terms of $u_{k}$, to the following:
\begin{equation*}
u_{k}(0)+\min_{\partial B_{r}}u_{k}
=
-\ln W_{k}(0)\longrightarrow +\infty,
\; \text{ as } \; 
k\longrightarrow +\infty,
\end{equation*}
as stated in \eqref{u_k_blow_up_at_0}, and implying (by \eqref{only_blow_up_point_for_xi}) 
that blow-up for solutions of the singular problem 
\eqref{singular_equation_for_u_k}
is equivalent to blow-up for its "regular" part. 

\

We shall use the estimates established here to describe 
the asymptotic behaviour of minimizers of the Donaldson functional 
considered in \cite{Goncalves_Uhlenbeck},\cite{Huang_Lucia_Tarantello_2},
and to obtain in particular that for Riemann surfaces of genus 2, 
it is always bounded from below, 
although not coercive. 
Furthermore, we shall provide rather precise information on when the
infimum is attained.
In this way we obtain the first existence result about (CMC) 1-immersions
of a closed orientable surface 
of genus 2 into  hyperbolic 3-manifolds.

For this purpose, we conveniently summarize the results established above
for a sequence $\xi_{k}$ satisfying: 
\begin{equation}\label{assumptions_for_theorem_for_references}
\begin{cases}
-\Delta \xi_{k}(x)
=
(\Pi_{j=1}^{s}\vert x-p_{j,k} \vert^{2\alpha_{j}} )h_{k}(x)+g_{k}(x) 
\; \text{ in  } \; B_{r} 
\\
s\geq 2 \; \text{ and  } \; \alpha_{j} \in \N
\; \text{ for  } \; j=1,\ldots,s
\\
\xi_{k}(0)=\max_{B_{r}}\xi_{k} \longrightarrow +\infty, 
\; \text{ as } \; k\longrightarrow +\infty,
\\
 \forall \;\; 0 < \delta < r 
\;\; \exists \; C_{\delta}>0
\; : \; 
\max_{\overline{B}_{r}\setminus B_{\delta}}\xi_{k}
\leq 
C_{\delta}
\\
\max_{\partial B_{r}} \xi_{k} -\min_{\partial B_{r}}\xi_{k} \leq C
\\
\int_{B_{r}} W_{k}e^{\xi_{k}} \leq C
,\;
W_{k}(x):=
(\Pi_{j=1}^{s}\vert x-p_{j,k} \vert^{2\alpha_{j}})h_{k}(x)  
\end{cases}
\end{equation}
\begin{thm}\label{theorem_for_reference}
Suppose $\xi_{k}$ satisfies \eqref{assumptions_for_theorem_for_references}
with the points $p_{j,k}$  
satisfying \eqref{2.3} and \eqref{3.2}, 
$h_{k}$ satisfying \eqref{1.14} and \eqref{2.6}, 
and $g_{k}$ a convergent sequence in $L^{p}(B_{r}),\; p>1$. 

If \eqref{3.0} holds, then the points
$p_{j,k}\neq 0,\;j=1,\ldots,s$;
and there exists 
$s_{1}\in \{ 2,\ldots,s \} $ such that (along a subsequence) 
\begin{equation*}
z_{j,k}:=\frac{p_{j,k}}{\vert p_{s_{1},k} \vert }
\longrightarrow 
z_{j}\neq 0, 
\; \forall \;
j=1,\ldots,s_{1},
\end{equation*} 
\begin{equation*}
\begin{split}  
\; \text{ if } \; s_{1}<s \; \text{ then } \;   
\frac{p_{j,k}}{\vert p_{s,k} \vert }\longrightarrow q_{j} \neq 0
\; \text{ and } \; 
\frac
{\vert p_{j,k} \vert }
{\vert p_{s_{1},k} \vert }
\longrightarrow 
+\infty,
\; \forall \; 
 j=s_{1}+1,\ldots,s. 
\end{split}
\end{equation*}
Moreover, 
\begin{equation*}
\begin{split}
& \xi_{k}(0) \; 
+  
2\ln \vert p_{s_{1},k} \vert   + \ln (W_{k}(0)) \longrightarrow +\infty, 
\; \text{ as } \; k\longrightarrow +\infty,
\\
& \xi_{k}(0) 
+
\left( \min_{\partial B_{r}} \xi_{k} + 2\ln (W_{k}(0))\right) = O(1)
\\
&
\xi_{k}(x)
=  
\ln  
\left(
\frac
{e^{\xi_{k}(0)}}
{(1+ \frac{W_{k}(0)}{8}e^{\xi_{k}(0)}\vert x \vert^{2})^{2} }
\right)
+
O(1)
,\; x \in B_{r} \\
&
\int_{B_{r}}\vert \nabla  \xi_{k} \vert^{2}dx
=  
16\pi 
\left(
\xi_{k}(0)+\ln (W_{k}(0))
\right)
+ O(1).
\end{split}
\end{equation*}
\end{thm}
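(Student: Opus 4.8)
The plan is to absorb the lower-order term $g_{k}$ into a uniformly bounded correction, thereby reducing the statement to the case $g_{k}\equiv 0$, which is exactly what Theorem~\ref{prop_3.2}, Proposition~\ref{prop_4.4} and the Corollary yielding \eqref{xi_k_uniform_estimate} already provide. First I would solve
\[
-\Delta\theta_{k}=g_{k}\ \text{ in }B_{r},\qquad \theta_{k}=0\ \text{ on }\partial B_{r};
\]
since $g_{k}$ converges in $L^{p}(B_{r})$ with $p>1$, elliptic estimates give that $\theta_{k}$ is uniformly bounded and convergent in $C^{1,\gamma}(\overline{B}_{r})$, so in particular $\|\theta_{k}\|_{C^{0}(\overline{B}_{r})}\le C$, $\|\nabla\theta_{k}\|_{L^{2}(B_{r})}\le C$, and $\theta_{k}\equiv 0$ on $\partial B_{r}$. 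Setting $\tilde\xi_{k}:=\xi_{k}-\theta_{k}$ one computes
\[
-\Delta\tilde\xi_{k}=\widetilde W_{k}\,e^{\tilde\xi_{k}}\ \text{ in }B_{r},\qquad \widetilde W_{k}:=W_{k}e^{\theta_{k}}=\left(\prod_{j=1}^{s}\vert x-p_{j,k}\vert^{2\alpha_{j}}\right)\tilde h_{k},\quad \tilde h_{k}:=h_{k}e^{\theta_{k}},
\]
and, crucially, $\widetilde W_{k}e^{\tilde\xi_{k}}=W_{k}e^{\xi_{k}}$, so $\tilde\xi_{k}$ carries exactly the same mass measure as $\xi_{k}$; in particular the blow-up mass at the origin is again $8\pi$, i.e. \eqref{3.0} holds for $\tilde\xi_{k}$. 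Since $0<a'\le\tilde h_{k}\le b'$ and (after renormalising $\tilde h_{k}$ so that its value at the origin equals $1$, which shifts $\tilde\xi_{k}$ by $O(1)$) the weight $\tilde h_{k}$ still satisfies \eqref{1.14} and \eqref{2.6}, the sequence $\tilde\xi_{k}$ fits the framework \eqref{2.0},\eqref{2.1},\eqref{2.3}--\eqref{2.7}: the localisation property \eqref{2.7} and the oscillation bound \eqref{2.1} are inherited because $\theta_{k}$ is uniformly bounded and vanishes on $\partial B_{r}$, while the normalisations \eqref{3.1}--\eqref{3.5} are arranged exactly as in Section~\ref{sec_preliminaries}: a translation bringing the maximum of $\tilde\xi_{k}$ to the origin (which moves the $p_{j,k}$ by an amount negligible for all the $O(1)$-level conclusions below), a relabeling so that $\vert p_{1,k}\vert\le\cdots\le\vert p_{s,k}\vert$, and the definitions of $\tau_{k}$, $q_{j,k}$ and $q_{j}$.

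Next I would apply Theorem~\ref{prop_3.2}, Proposition~\ref{prop_4.4} and \eqref{xi_k_uniform_estimate} to $\tilde\xi_{k}$. This yields $p_{j,k}\neq 0$ for $1\le j\le s$ and an index $s_{1}\in\{2,\dots,s\}$ together with the scaling limits \eqref{3.6}--\eqref{3.7} (statements about the $p_{j,k}$ alone, hence transferred verbatim), as well as the relations \eqref{3.8}, \eqref{4.36_former_4.34}, \eqref{xi_k_uniform_estimate} and \eqref{4.37_former_4.35} written with $\tilde\xi_{k}$ and $\widetilde W_{k}(0)$ in place of $\xi_{k}$ and $W_{k}(0)$. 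To pass back to $\xi_{k}$ I would use the dictionary $\tilde\xi_{k}=\xi_{k}+O(1)$ pointwise, $\tilde\xi_{k}(0)=\xi_{k}(0)+O(1)$, $\min_{\partial B_{r}}\tilde\xi_{k}=\min_{\partial B_{r}}\xi_{k}$ (as $\theta_{k}=0$ on $\partial B_{r}$), $\ln\widetilde W_{k}(0)=\ln W_{k}(0)+O(1)$ (as $\theta_{k}(0)=O(1)$), and $2\sum_{j}\alpha_{j}\ln\vert p_{j,k}\vert=\ln W_{k}(0)-\ln h_{k}(0)=\ln W_{k}(0)+O(1)$. Substituting into \eqref{3.8} and \eqref{4.36_former_4.34} gives the first two asserted identities, and substituting into \eqref{xi_k_uniform_estimate} gives the pointwise bubble estimate for $\xi_{k}$, since replacing $(\tilde\xi_{k}(0),\widetilde W_{k}(0))$ by $(\xi_{k}(0),W_{k}(0))$ alters the logarithm only by $O(1)$: the ratio of the denominators $1+\tfrac{\widetilde W_{k}(0)}{8}e^{\tilde\xi_{k}(0)}\vert x\vert^{2}$ and $1+\tfrac{W_{k}(0)}{8}e^{\xi_{k}(0)}\vert x\vert^{2}$ is bounded above and below uniformly in $k$ and in $x\in B_{r}$.

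For the energy identity one must be slightly careful. Writing $\int_{B_{r}}\vert\nabla\xi_{k}\vert^{2}=\int_{B_{r}}\vert\nabla\tilde\xi_{k}\vert^{2}+2\int_{B_{r}}\nabla\tilde\xi_{k}\cdot\nabla\theta_{k}+\int_{B_{r}}\vert\nabla\theta_{k}\vert^{2}$, the last term is $O(1)$, but the cross term is not controllable by Cauchy--Schwarz since $\|\nabla\tilde\xi_{k}\|_{L^{2}}\to+\infty$. Instead I would integrate by parts, using $\theta_{k}|_{\partial B_{r}}=0$, to get $\int_{B_{r}}\nabla\tilde\xi_{k}\cdot\nabla\theta_{k}=\int_{B_{r}}\theta_{k}\,\widetilde W_{k}e^{\tilde\xi_{k}}=\int_{B_{r}}\theta_{k}\,W_{k}e^{\xi_{k}}$, which is $O(1)$ by $\|\theta_{k}\|_{C^{0}}\le C$ and the volume bound in \eqref{assumptions_for_theorem_for_references}. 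Hence $\int_{B_{r}}\vert\nabla\xi_{k}\vert^{2}=\int_{B_{r}}\vert\nabla\tilde\xi_{k}\vert^{2}+O(1)=16\pi(\xi_{k}(0)+\ln W_{k}(0))+O(1)$ by \eqref{4.37_former_4.35}, which is the last asserted identity.

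All of the substance having been carried out in Section~\ref{sec_preliminaries}, the only genuinely new point is the reduction itself: one must verify that the perturbed weight $\tilde h_{k}$ still enjoys the regularity \eqref{1.14} needed to invoke the local analysis underlying Theorems~\ref{prop_3.2} and \ref{theorem_2_new}, and that passing from $\xi_{k}$ to $\tilde\xi_{k}$ changes neither the blow-up location (up to a negligible translation) nor the blow-up mass — both resting on the uniform bounds for $\theta_{k}$. I expect the energy identity, where the naive cross-term estimate fails and the zero boundary value of $\theta_{k}$ must be exploited through integration by parts, to be the main technical subtlety; everything else is bookkeeping with $O(1)$ errors.
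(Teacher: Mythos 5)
Your reduction is exactly the paper's: the paper sets $\hat{\xi}_{k}=\xi_{k}+\chi_{k}$ with $\Delta\chi_{k}=g_{k}$ in $B_{r}$, $\chi_{k}=0$ on $\partial B_{r}$ (identical to your $\tilde{\xi}_{k}=\xi_{k}-\theta_{k}$), notes that $\chi_{k}$ converges in $C^{1,\alpha}$ and that $\ln W_{k}(0)=\sum_{j}2\alpha_{j}\ln\vert p_{j,k}\vert+O(1)$, and then invokes Theorem \ref{prop_3.2}, Proposition \ref{prop_4.4} and \eqref{xi_k_uniform_estimate}, leaving the bookkeeping to the reader. Your write-up is correct and in fact supplies details the paper omits, notably the integration-by-parts treatment of the cross term $\int_{B_{r}}\nabla\tilde{\xi}_{k}\cdot\nabla\theta_{k}$ (where Cauchy--Schwarz would fail) and the observation that $W_{k}(0)e^{\xi_{k}(0)}$ is essentially invariant under the shift, so the bubble profile transfers with only an $O(1)$ error.
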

\begin{proof}
It suffices to observe that the results established above apply to the sequence:
$\hat{\xi}_{k}:=\xi_{k} + \chi_{k}$,  
where $\chi_{k}$ is the unique solution of the 
Dirichlet problem:
\begin{equation*}
\begin{cases}
\Delta \chi_{k} = g_{k} \; \text{ in } \; B_{r} \\
\chi_{k} = 0  \; \text{ in } \;  \partial B_{r}. 
\end{cases}
\end{equation*}
Indeed, by elliptic estimates, $\chi_{k}$ converges strongly 
in $C^{1,\alpha}(B_{r})$ and moreover,
$\ln W_{k}(0)= \sum_{j=1}^{s}2\alpha_{j}\ln \vert p_{j,k} \vert + O(1).$
Then it is easy to check that in terms of the original sequence $\xi_{k}$, 
we get exactly the claimed estimates.  
\end{proof}

\section{Appendix: The Proof of  \eqref{2.33}.}

By virtue of the properties pointed out in Section \ref{sec_preliminaries}
for the sequence $\varphi_{k}$ defined in \eqref{2.12}, 
to establish \eqref{2.33} we can follow word by word the arguments 
used in \cite{Lee_Lin_Yang_Zhang} in order to show the same identity 
(i.e.(4.9) in \cite{Lee_Lin_Yang_Zhang}), see also \cite{Lee_Lin_Tarantello_Yang}.  
To this purpose we start with the following: 
\begin{lemma}
Let $\mu$ be defined in \eqref{2.19}. Then there exists $R_{0}>0$ sufficiently large, such that, as
$k\longrightarrow +\infty$,
\begin{equation}\label{A.1}
\nabla \varphi_{k}(x)
\longrightarrow
-\mu \frac{x}{\vert x \vert^{2}}
+
O(\frac{1}{\vert x \vert^{2}})
\end{equation}
uniformly on compact sets of $\R^{2}\setminus B_{R_{0}}$.
\end{lemma}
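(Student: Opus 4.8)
The plan is to extract $\nabla\varphi_{k}$ from the scaled representation formula \eqref{2.17} and then follow the mass of $W_{1,k}e^{\varphi_{k}}$ as $k\to+\infty$. Differentiating \eqref{2.17} in the variable $x_{1}$, writing $x=x_{1}$ and keeping $x_{2}$ fixed, gives
\begin{equation*}
\nabla\varphi_{k}(x)=-\frac{1}{2\pi}\int_{D_{k}}\frac{x-y}{|x-y|^{2}}\,W_{1,k}(y)e^{\varphi_{k}(y)}\,dy+\nabla_{x}T_{k}(x,x_{2}).
\end{equation*}
Since \eqref{2.17} is obtained by rescaling \eqref{2.9}, one has $T_{k}(x_{1},x_{2})=\psi_{k}(\tau_{k}x_{1})-\psi_{k}(\tau_{k}x_{2})$ with $\psi_{k}$ uniformly bounded in $C^{2}(B_{r})$; hence $\nabla_{x}T_{k}(x,x_{2})=\tau_{k}(\nabla\psi_{k})(\tau_{k}x)=O(\tau_{k})$ uniformly for $x$ in compact sets, and this term is negligible. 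So it suffices to analyse the Riesz-type potential $\mathcal I_{k}(x):=\frac{1}{2\pi}\int_{D_{k}}\frac{x-y}{|x-y|^{2}}W_{1,k}e^{\varphi_{k}}\,dy$ for $|x|$ large.

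I would fix $R_{0}>0$ large enough that the finite blow-up set $S_{\varphi}$ of $\varphi_{k}$ lies in the open ball $B_{R_{0}/2}$, and split $\mathcal I_{k}(x)=\frac{1}{2\pi}\int_{B_{R_{0}/2}}+\frac{1}{2\pi}\int_{D_{k}\setminus B_{R_{0}/2}}$. For $|x|>R_{0}$ one has $|x|\ge 2|y|$ on $B_{R_{0}/2}$, so the first order expansion $\frac{x-y}{|x-y|^{2}}=\frac{x}{|x|^{2}}+O(|y|/|x|^{2})$, together with $\int_{B_{R_{0}/2}}(1+|y|)W_{1,k}e^{\varphi_{k}}\le C$, shows that the inner piece equals $\frac{x}{|x|^{2}}\cdot\frac{1}{2\pi}\int_{B_{R_{0}/2}}W_{1,k}e^{\varphi_{k}}+O(1/|x|^{2})$. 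The real work is the outer piece, which I would treat according to the alternative of Proposition \ref{prop_0.3} satisfied by $\varphi_{k}$.

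If $\varphi_{k}$ satisfies alternative (i) or a) of (iii), then $W_{1,k}e^{\varphi_{k}}\to 0$ uniformly on $\{R_{0}/2\le|y|\le\rho\}$ for every fixed $\rho$ (since $\varphi_{k}\to-\infty$ there and $W_{1,k}$ is locally bounded by \eqref{2.15}), while the crude bound $\big|\int_{|y|\ge\rho}\frac{x-y}{|x-y|^{2}}W_{1,k}e^{\varphi_{k}}\big|\le C/\rho$ — valid once $\rho\ge 2|x|$, using $\int W_{1,k}e^{\varphi_{k}}\le C$ — handles the far region; letting $k\to+\infty$ and then $\rho\to+\infty$ shows that the outer piece vanishes, and since $\frac{1}{2\pi}\int_{B_{R_{0}/2}}W_{1,k}e^{\varphi_{k}}\to\frac{1}{2\pi}\sum_{q\in S_{\varphi}}\sigma(q)=\mu$ by \eqref{2.25}, the conclusion follows. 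If instead $\varphi_{k}$ satisfies alternative (ii) or b) of (iii), then $\varphi_{k}\to\varphi_{0}$ in $C^{2}_{loc}(\R^{2}\setminus S_{\varphi})$, so $\nabla\varphi_{k}\to\nabla\varphi_{0}$ uniformly on the compact, $S_{\varphi}$-free set $\{R_{0}\le|x|\le L\}$, and it only remains to prove $\nabla\varphi_{0}(x)=-\mu\frac{x}{|x|^{2}}+O(1/|x|^{2})$. For this I would represent $\varphi_{0}$ as the logarithmic potential of the finite measure $\nu:=W_{1}e^{\varphi_{0}}+\sum_{q\in S_{\varphi}}\sigma(q)\delta_{q}$ of total mass $2\pi\mu$, use \eqref{2.30a} and the identity \eqref{2.30}, $\mu=2(\alpha+1+\sigma)$ with $\sigma\in\N$, to get $W_{1}e^{\varphi_{0}}(y)=O(|y|^{2\alpha-\mu})=O(|y|^{-2-2\sigma})$ (so that $\int|y|\,d\nu<\infty$ and $\nu$ has a fast-decaying tail outside large balls), and then split $\nu$ at $|y|=|x|/2$ and expand $\frac{x-y}{|x-y|^{2}}$ exactly as above.

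The main obstacle is precisely the outer region $D_{k}\setminus B_{R_{0}}$: one must rule out that the part of the mass of $W_{1,k}e^{\varphi_{k}}$ that escapes to infinity in the rescaled variable perturbs $-\mu x/|x|^{2}$ by more than $O(1/|x|^{2})$, rather than merely by $o(1)$. In the concentration alternatives this follows easily from the weak convergence of $W_{1,k}e^{\varphi_{k}}$ to a measure supported in $B_{R_{0}/2}$; in the non-collapsing alternatives it rests on the sharp decay of $\varphi_{0}$ at infinity, which is exactly why the integrality $\sigma\in\N$ coming from \eqref{2.30} is used here.
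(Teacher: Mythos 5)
Your argument is correct and rests on the same pillars as the paper's proof: the gradient of the Green/potential representation, a case analysis over the alternatives of Proposition \ref{prop_0.3}, and, in the non-concentration cases, the decay $W_{1}e^{\varphi_{0}}=O(\vert y \vert^{-2-2\sigma})$ extracted from \eqref{2.30a} and \eqref{2.30} (this is exactly the paper's estimate \eqref{A.4}). The bookkeeping, however, is genuinely different. The paper splits the \emph{unscaled} integral at $\vert y \vert =\tau_{k}\vert x \vert^{2}$, so that the outer piece $I_{2,k}$ is killed in one line by $\tau_{k}/\vert y \vert \leq 1/\vert x \vert^{2}$ together with the total mass bound, and then passes to the limit \emph{inside} the truncated potential $I_{1,k}$, estimating the resulting remainders $R_{1},R_{2}$ by hand; you instead split at the fixed radius $R_{0}/2$, absorb the escaping mass via the crude $C/\rho$ tail bound in the concentration cases, and in alternatives \eqref{uniform_on_compact} and \eqref{2.26} you first replace $\nabla\varphi_{k}$ by $\nabla\varphi_{0}$ through the $C^{2}_{loc}$ convergence and only then do the $\vert x \vert$-asymptotics on the limiting measure $\nu=W_{1}e^{\varphi_{0}}+\sum_{q}\sigma(q)\delta_{q}$ of total mass $2\pi\mu$. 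Your route cleanly decouples the limit in $k$ from the asymptotics in $\vert x \vert$ and avoids carrying the finite-$k$ remainder terms (it does require the standard identification of $\varphi_{0}$ with the logarithmic potential of $\nu$ up to a constant, which follows from \eqref{2.30a} and Liouville's theorem); the paper's $x$-dependent split is slightly more economical in that it never needs to discuss the limiting equation for $\varphi_{0}$ at all in the outer region. You are also more careful than the paper on one small point: you track the boundary term $\nabla_{x}T_{k}=O(\tau_{k})$ coming from \eqref{2.9}, which the paper's displayed formula for $\nabla\varphi_{k}$ silently omits. Both arguments deliver the error $O(1/\vert x \vert^{2})$ uniformly for $\vert x \vert \geq R_{0}$, which is what the Pohozaev computation downstream actually needs.
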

\begin{proof}
From \eqref{2.17} we have
\begin{equation*}
\begin{split}
\nabla \varphi_{k}(x)
= &
-
\frac{\tau_{k}}{2\pi}
\int_{B_{r}} (\frac{\tau_{k}x-y}{\vert \tau_{k}x-y \vert^{2}})
W_{k}(y)e^{\xi_{k}(y)}dy \\
= &
- \frac{\tau_{k}}{2\pi}
\int_
{
\{\vert y \vert \leq \tau_{k}\vert x \vert^{2}\}
} (\frac{\tau_{k}x-y}{\vert \tau_{k}x-y \vert^{2}})
W_{k}(y)e^{\xi_{k}(y)}dy \\
& -
\frac{\tau_{k}}{2\pi}
\int_{
\{\tau_{k}\vert x \vert^{2} \leq \vert y \vert \leq r \}
} 
(\frac{\tau_{k}x-y}{\vert \tau_{k}x-y \vert^{2}})
W_{k}(y)e^{\xi_{k}(y)}dy \\
= & :
I_{1,k}(x)
+
I_{2,k}(x).
\end{split}
\end{equation*}
Now, for $R_{0}>1$ (to be fixed later) let 
$\vert x \vert \geq R_{0}$
and
$\tau_{k}\vert x \vert^{2} \leq \vert y \vert \leq r$,
then we have:
\begin{equation*}
(1-\frac{1}{R_{0}})\vert y \vert
\leq
\vert \tau_{k}x-y \vert
\leq
(1+\frac{1}{R_{0}})\vert y \vert
\end{equation*}
and therefore
\begin{equation*}
\vert I_{2,k} \vert
\leq
C \int_
{
\{\tau_{k}\vert x \vert^{2} \leq \vert y \vert \leq r\}
}
\frac{\tau_{k}}{\vert y \vert }e^{\xi_{k}(y)}W_{k}(y)dy
\leq
\frac{C}{\vert x \vert^{2}},
\;
\vert x \vert \geq R_{0} >1.
\end{equation*}
By recalling \eqref{2.15} and \eqref{2.12}, we find:
\begin{equation*}
I_{1,k}(x)
=
-\frac{1}{2\pi}
\int_{\{\vert z \vert \leq \vert x \vert^{2}\}}
\frac{(x-z)}{\vert x-z \vert^{2}}
W_{1,k}(z)e^{\varphi_{k}(z)}dz.
\end{equation*}
Therefore, in case
$\varphi_{k}$ satisfies alternative \eqref{uniform_on_compact_minus_infinity},
then  \eqref{A.1} readily follows.
Indeed, $I_{1,k}\longrightarrow 0$, as $k\longrightarrow +\infty$, 
uniformly on compact sets of $\R^{2}\setminus B_{R_{0}}$
and $\mu=0$ in this case.
While, 
in case $\varphi_{k}$ satisfies alternative \eqref{uniform_on_compact_without},
then for $R_{0}>1$ sufficiently large so that,
$S_{\varphi}\subset \subset B_{R_{0}}$,
we have, as $k\longrightarrow +\infty$:
\begin{equation*}
I_{1,k}(x)
\longrightarrow
-\frac{1}{2\pi}
\sum_{q\in S_{\varphi}} \sigma(q) \frac{(x-q)}{\vert x-q \vert^{2}}
\; \text{ uniformly on compact sets of } \;
\R^{2}\setminus B_{R_{0}},
\end{equation*}
and, in this case (see \eqref{2.25}), we have :
\begin{equation*}
\begin{split}
-\frac{1}{2\pi} \sum_{q\in S_{\varphi}}\sigma(q)\frac{(x-q)}{\vert x-q \vert^{2}}
= &
-
\mu \frac{x}{\vert x \vert^{2}}
+
\frac{1}{2\pi}\sum_{q \in S_{\varphi}}
(
\frac{x}{\vert x \vert^{2}}
-
\frac{(x-q)}{\vert x-q \vert^{2}}
) \\
= &
-
\mu \frac{x}{\vert x \vert^{2}}
+
O(\frac{1}{\vert x \vert^{2}})
,
\; \text{ for } \;
\vert x \vert \geq R_{0}.
\end{split}
\end{equation*}
So also in this case \eqref{A.1} is established.
Finally, in case $\varphi_{k}$ satisfies alternative
\eqref{uniform_on_compact} or \eqref{2.26}, 
then by virtue of \eqref{2.30a} and \eqref{2.30} we find that,
\begin{equation}\label{A.4}
W_{1}(x)e^{\varphi_{0}}
\leq
\frac{C}{\vert x \vert^{2(\rho+1)}}
\; \text{ for } \;
\vert x \vert \geq R_{0}
\end{equation}
with $R_{0}\gg 1$ sufficiently large and 
suitable constants $C>0$ and $ \rho\geq 1$.
In this case, as $k\longrightarrow +\infty$, we find:
\begin{equation*}
I_{1,k}(x)
\longrightarrow
-\mu\frac{x}{\vert x \vert^{2}}
+
R_{1}(x)
,
\; \text{ uniformly on compact sets of } \;
\R^{2}\setminus B_{R_{0}}
\end{equation*}
with
\begin{equation*}
\begin{split}
R_{1}(x)
= &
\frac{1}{2\pi}
\int_{\{\vert y \vert \geq \vert x \vert^{2} \}}
W_{1}(x)e^{\varphi_{0}} \\
& +
\frac{1}{2\pi}
\int_{\{\vert y \vert < \vert x \vert^{2}\}}
(
\frac{x}{\vert x \vert^{2}}
-
\frac{x-y}{\vert x-y \vert^{2}}
)
W_{1}(y)e^{\varphi_{0}(y)}
dy
+
R_{2}(x),
\end{split}
\end{equation*}
where $R_{2}(x)=0$ in case alternative \eqref{uniform_on_compact} holds
(i.e. $S_{\varphi}=\emptyset$)
or
\begin{equation*}
R_{2}(x)
=
\frac{1}{2\pi}
\sum_{q\in S_{\varphi}}
\sigma(q)(\frac{x}{\vert x \vert^{2}}-\frac{x-q}{\vert x-q \vert^{2} })
=
O(\frac{1}{\vert x \vert^{2}})
\end{equation*}
in case alternative \eqref{2.26} holds  (i.e. $S_{\varphi}\neq \emptyset$).
Clearly, in view of \eqref{A.4}, we easily check that,
\begin{equation*}
\int_{\{\vert y \vert \geq \vert x \vert^{2}\}}
W_{1}(y)e^{\varphi_{0}(y)}
\leq
\frac{C}{\vert x \vert^{2}}
,
\; \forall \;
\vert x \vert \geq R_{0}.
\end{equation*}
By recalling the following easy facts:
\begin{equation*}
\begin{split}
&
\; \text{ if } \;
\vert x-y \vert \geq \frac{\vert x \vert }{2}
\; \text{ then } \;
\vert \frac{x}{\vert x \vert^{2}} - \frac{x-y}{\vert x-y \vert^{2}} \vert
\leq 4\frac{\vert y \vert }{\vert x \vert^{2}}
\\ &
\; \text{ if } \;
\vert x-y \vert \leq \frac{\vert x \vert }{2}
\; \text{ then } \;
\vert \frac{x}{\vert x \vert^{2}} - \frac{x-y}{\vert x-y \vert^{2}} \vert
\leq \frac{4}{\vert x-y \vert},
\end{split}
\end{equation*}
and by using \eqref{A.4}, for
$\vert x \vert \geq R_{0}$
we derive:
\begin{equation*}
\begin{split}
\vert &
\int_{\{\vert y \vert \leq \vert x \vert^{2}\}}
(
\frac{x}{\vert x \vert^{2}}
-
\frac{x-y}{\vert x-y \vert^{2}}
)
W_{1,k}(y)e^{\varphi_{0}}
dy
\vert
\\
\leq &
\frac{C}{\vert x \vert^{2}}
\int_{\R^{2}} \vert y \vert W_{1}(y)e^{\varphi_{0}}dy
+
C \int_{ \{ \vert x-y \vert \leq \frac{\vert x \vert }{2} \}}
\frac{1}{\vert x-y \vert }W_{1}(y)e^{\varphi_{0}}dy \\
\leq &
\frac{C}{\vert x \vert^{2}}
+
C \int_{ \{\vert x-y \vert \leq \frac{\vert x \vert }{2} \}}
\frac{1}{\vert x-y \vert}\frac{1}{\vert y \vert ^{2(\rho+1)}}dy  \\
= &
\frac{C}{\vert x \vert^{2}}
+
\frac{C}{\vert x \vert ^{2\rho+1}}
\int_
{
\{
\vert \frac{x}{\vert x \vert }-z \vert
\leq
\frac{1}{2}
\}
}
\frac{1}{\vert \frac{x}{\vert x \vert } - z \vert }
\frac{dz}{\vert z \vert^{2(\rho + 1)}}
\leq
\frac{C}{\vert x \vert^{2}}
\; \text{ for } \; 
\vert x \vert \geq R_{0}\geq 1.
\end{split}
\end{equation*}
By combining the estimates above, 
we obtain \eqref{A.1} also when $\varphi_{k}$ satisfies either 
alternative \eqref{uniform_on_compact} or alternative \eqref{2.26}, 
and the proof is completed.
\end{proof}

\

By using \eqref{2.10} and \eqref{A.1}, we can easily show that, as $k\longrightarrow +\infty$:
\begin{equation*}
r \int_{\partial B_{r}}
(
\vert \partial_{\nu}\xi_{k} \vert
-
\frac{1}{2}\vert \nabla \xi_{k} \vert^{2}
)
d\sigma
\longrightarrow
2\pi \frac{m^{2}}{2}
+
o_{r}(1)
\end{equation*}
with $m$ in \eqref{2.8} and 
$o_{r}(1)\longrightarrow 0 $, as $r\longrightarrow 0$, and
\begin{equation*}
R \int_{\partial B_{R}}
(
\vert \partial_{\nu}\xi_{k} \vert
-
\frac{1}{2}\vert \nabla \xi_{k} \vert^{2}
)
d\sigma
\longrightarrow
2\pi \frac{\mu^{2}}{2}
+
o_{R}(1)
\end{equation*}
with $o_{R}(1)\longrightarrow 0 $,
as $R\longrightarrow + \infty$.
\begin{lemma}
There holds: 
\begin{equation*}
m^{2}-\mu^{2}
=
4(\alpha+1)(m-\mu).
\end{equation*}
\end{lemma}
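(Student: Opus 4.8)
The plan is to obtain \eqref{2.33} from a Pohozaev identity for $\xi_{k}$ on the annulus $A_{k}:=B_{r}\setminus B_{\tau_{k}R}$ (with $\tau_{k}$ as in \eqref{2.11b} and $R\gg1$), combined with the two boundary limits recalled just above — the first for $\xi_{k}$ on $\partial B_{r}$, coming from \eqref{2.10}, the second, through the scaling \eqref{2.12} and the Appendix Lemma \eqref{A.1}, for $\varphi_{k}$ on $\partial B_{R}$. Concretely, I multiply $-\Delta\xi_{k}=W_{k}e^{\xi_{k}}$ by $x\cdot\nabla\xi_{k}$ and integrate over $A_{k}$. Integrating the left-hand side by parts leaves only boundary contributions: on $\partial B_{r}$ the term $-r\int_{\partial B_{r}}\bigl(|\partial_{\nu}\xi_{k}|^{2}-\tfrac12|\nabla\xi_{k}|^{2}\bigr)$, and on $\partial B_{\tau_{k}R}$ the term $+\tau_{k}R\int_{\partial B_{\tau_{k}R}}\bigl(|\partial_{\nu}\xi_{k}|^{2}-\tfrac12|\nabla\xi_{k}|^{2}\bigr)$, which by \eqref{2.12} equals $R\int_{\partial B_{R}}\bigl(|\partial_{\nu}\varphi_{k}|^{2}-\tfrac12|\nabla\varphi_{k}|^{2}\bigr)$. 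As $k\to+\infty$ these tend respectively to $-2\pi\frac{m^{2}}{2}+o_{r}(1)$ and $2\pi\frac{\mu^{2}}{2}+o_{R}(1)$.

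On the right-hand side, writing $(x\cdot\nabla\xi_{k})W_{k}e^{\xi_{k}}=W_{k}\,(x\cdot\nabla e^{\xi_{k}})$ and integrating by parts gives a boundary term $r\int_{\partial B_{r}}W_{k}e^{\xi_{k}}-\tau_{k}R\int_{\partial B_{\tau_{k}R}}W_{k}e^{\xi_{k}}$, the bulk term $-2\int_{A_{k}}W_{k}e^{\xi_{k}}$, and the bulk term $-\int_{A_{k}}(x\cdot\nabla W_{k})e^{\xi_{k}}$. The boundary term is $o_{r}(1)+o_{R}(1)$ as $k\to+\infty$: the piece on $\partial B_{r}$ because $W_{k}e^{\xi_{k}}$ is uniformly bounded on $\partial B_{r}$ (the origin being the only blow-up point of $\xi_{k}$, by \eqref{2.7}), and the piece on $\partial B_{\tau_{k}R}$, which after rescaling is $R\int_{\partial B_{R}}W_{1,k}e^{\varphi_{k}}$, because either $\varphi_{k}\to-\infty$ uniformly on $\partial B_{R}$ in the concentration case (once $R$ is large enough that $S_{\varphi}$ is contained in $B_{R}$), or $W_{1,k}e^{\varphi_{k}}\to W_{1}e^{\varphi_{0}}$ which decays like $|x|^{-2(\rho+1)}$ with $\rho\ge1$ by \eqref{A.4}. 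By the definitions \eqref{2.8}, \eqref{2.19} of $m$ and $\mu$ along the diagonal subsequence already fixed, $\frac1{2\pi}\int_{A_{k}}W_{k}e^{\xi_{k}}=\frac1{2\pi}\bigl(\int_{B_{r}}-\int_{B_{\tau_{k}R}}\bigr)W_{k}e^{\xi_{k}}\longrightarrow m-\mu$ up to $o_{r}(1)+o_{R}(1)$.

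The only term that uses the structure of $W_{k}$ is $\int_{A_{k}}(x\cdot\nabla W_{k})e^{\xi_{k}}$. Here I use
\[
\frac{x\cdot\nabla W_{k}}{W_{k}}=\sum_{j=1}^{s}2\alpha_{j}\,\frac{x\cdot(x-p_{j,k})}{|x-p_{j,k}|^{2}}+\frac{x\cdot\nabla h_{k}}{h_{k}},
\]
and observe that on $A_{k}$ one has $|x|\le r$, so the $h_{k}$-term is bounded by $\tfrac{A}{a}r$ by \eqref{1.14}, while $|p_{j,k}|\le\tau_{k}\le|x|/R$ for every $j$ (by \eqref{2.11b}) gives $\frac{x\cdot(x-p_{j,k})}{|x-p_{j,k}|^{2}}=1+O(1/R)$ uniformly on $A_{k}$. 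Hence $\frac{x\cdot\nabla W_{k}}{W_{k}}=2\alpha+O(1/R)+O(r)$ on $A_{k}$, and therefore $\int_{A_{k}}(x\cdot\nabla W_{k})e^{\xi_{k}}=2\alpha\int_{A_{k}}W_{k}e^{\xi_{k}}+o(1)=4\pi\alpha(m-\mu)+o_{r}(1)+o_{R}(1)$, using only that $\int_{A_{k}}W_{k}e^{\xi_{k}}$ is bounded and converges to $2\pi(m-\mu)$. Collecting everything, the identity reads $-\pi m^{2}+\pi\mu^{2}=-4\pi(m-\mu)-4\pi\alpha(m-\mu)+o_{r}(1)+o_{R}(1)$; dividing by $\pi$ and letting $R\to+\infty$ and then $r\to0^{+}$ yields $m^{2}-\mu^{2}=4(\alpha+1)(m-\mu)$.

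The delicate point — and the reason it is convenient to quote the verbatim argument of \cite{Lee_Lin_Yang_Zhang} — is the rigorous control of the inner boundary term $R\int_{\partial B_{R}}W_{1,k}e^{\varphi_{k}}=o_{R}(1)$, which needs pointwise control of $\varphi_{k}$ near $\partial B_{R}$ uniform in $k$ and decaying in $R$: in the non-concentration cases this rests on the decay estimate \eqref{A.4} established in the proof of the Appendix Lemma, while in the concentration case it follows from $\varphi_{k}\to-\infty$ uniformly on $\partial B_{R}$. The matching behaviour of $\nabla\xi_{k}$ and $\nabla\varphi_{k}$ on the two circles is exactly \eqref{2.10} and \eqref{A.1}; apart from these inputs the proof is the elementary bookkeeping above, so I would import the boundary estimates from \cite{Lee_Lin_Yang_Zhang} rather than reprove them.
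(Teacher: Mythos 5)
Your argument is correct and is essentially the paper's own proof: both apply the Pohozaev identity for $\xi_{k}$ on the annulus $B_{r}\setminus B_{R\tau_{k}}$, evaluate the outer and inner boundary terms via \eqref{2.10} and the Appendix convergence \eqref{A.1} for $\varphi_{k}$ to get $\pi(m^{2}-\mu^{2})$, and identify the bulk terms $2+x\cdot\nabla W_{k}/W_{k}=2(1+\alpha)+O(1/R)+O(r)$ to get $4\pi(1+\alpha)(m-\mu)$, with the same reliance on \cite{Lee_Lin_Yang_Zhang} for the boundary decay estimates. No substantive differences to report.
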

\begin{proof}
As in \cite{Lee_Lin_Yang_Zhang},\cite{Lee_Lin_Tarantello_Yang},
we use the Pohozaev identity
(see e.g. (5.2.14) in \cite{Tarantello_Book}) 
for $\xi_{k}$ (satisfying \eqref{2.0}) in
$B_{r}\setminus B_{R \tau_{k}}$ and obtain:
\begin{equation}\label{A.10}
\begin{split}
r
\int_{\partial B_{r}}
(
\vert \partial_{\nu}\xi_{k} \vert^{2}
& -
\frac{1}{2}\vert \nabla \xi_{k} \vert^{2}
+
W_{k}e^{\xi_{k}}
)
d\sigma \\
& -
R \int_{\partial B_{R}}
(
\vert \partial_{\nu}\varphi_{k} \vert^{2}
-
\frac{1}{2}
\vert \nabla \varphi_{k} \vert^{2}
+
W_{1,k}e^{\varphi_{k}} )d\sigma
\\
=  &
\int_
{
B_{\frac{r}{\tau_{k}} \setminus B_{R}}
}
2
(
1
+
\sum_{j=1}^{s} \alpha_{j}
\frac{y(y-q_{j,k})}{\vert y-q_{j,k} \vert^{2}}
)
W_{1,k}(y)e^{\varphi_{k}}dy
\\
& +
\int_
{
B_{\frac{r}{\tau_{k}} \setminus B_{R}}
}
(
\tau_{k}y
\cdot
\frac{\nabla h_{k}(\tau_{k}y)}{h_{k}(\tau_{k}y)}
)
W_{1,k}(y)e^{\varphi_{k}(y)}dy
.
\end{split}
\end{equation}
By using \eqref{2.7} and \eqref{2.10}, and the convergence \eqref{A.1} for $\varphi_{k}$,
we easily check that, 
for the left hand side $(LHS)_{k}$
of \eqref{A.10}, we find
\begin{equation*}
(LHS)_{k}
\longrightarrow
2\pi(\frac{m^{2}}{2}-\frac{\mu^{2}}{2})
+
o_{r}(1)+o_{R}(1),
\; \text{ as } \; 
k\longrightarrow +\infty.
\end{equation*}
While for the right hand side
$(RHS)_{k}$ of \eqref{A.10}, we have:
\begin{equation*}
\begin{split}
(RHS)_{k}
= &
2(1+\alpha)
\int_{B_{\frac{r}{\tau_{k}}}\setminus B_{R}}
W_{1,k}(y)e^{\varphi_{k}} \\
& +
\sum_{j=1}^{s}
2 \alpha_{j}
\int_{B_{\frac{r}{\tau_{k}}}\setminus B_{R}}
q_{k,j}
\cdot
\frac{y-q_{k,j}}{\vert y-q_{k,j} \vert^{2}}
W_{1,k}e^{\varphi_{k}}
+
o_{r}(1)
\end{split}
\end{equation*}
and we conclude that, as $k\longrightarrow +\infty$,
\begin{equation*}
(RHS)_{k}
\longrightarrow
4\pi(1+\alpha)(m-\mu)+o_{r}(1)+o_{R}(1).
\end{equation*}
Then we obtain the desired conclusion, by letting
$r\longrightarrow 0^{+}$ and $R\longrightarrow +\infty$.
\end{proof}

\

\textbf{Acknowledgments}

\noindent
We thank Dr. Martin Mayer for his precious assistance and 
useful comments during the preparation of the manuscript.

This work was partially supported by:
MIUR excellence project:   
Department of Mathematics, University of Rome "Tor Vergata"
CUP E83C18000100006,
by "Beyond Borders" research project: 
"Variational approaches to PDE" 
and Fondi di Ricerca Scientifica d'Ateneo, 2021:
Research Project 
"Geometric Analysis in Nonlinear PDEs",
CUP E83C22001810005.

\end{document}